\newtheorem{theorem}{Theorem}[section]
\newtheorem{definition}[theorem]{Definition}
\newtheorem{assumption}[theorem]{Assumption}
	\newtheorem{prop}[theorem]{Proposition}
	\newtheorem{proposition}[theorem]{Proposition}
	\newtheorem{lemma}[theorem]{Lemma}
	\newtheorem{qn}[theorem]{Question}
	\theoremstyle{definition}
	\newtheorem{rmk}[theorem]{Remark}
\newcommand{\hyperbolic}{\mathbb{H}}
\newcommand{\Ga}{\Gamma}
\newcommand{\reals}{\mathbb{R}}
\newcommand{\complexes}{\mathbb{C}}
\newcommand{\Fr}{\operatorname{Fr}}
\newcommand{\integers}{\mathbb{Z}}
\newcommand{\naturals}{\mathbb{N}}
\newcommand{\pslc}{\operatorname{PSL}_2\complexes}
\newcommand{\ie}{i.e.\ }
	\newcommand{\G}{{\Gamma}}
	\newcommand{\natls}{{\mathbb N}}
	\newcommand\HHH{{\mathbb H}}
	\newcommand\EE{{\mathcal E}}
	\newcommand\FF{{\mathcal F}}
	\newcommand\HH{{\mathcal H}}
	\newcommand\LL{{\mathcal L}}
	\newcommand\MM{{\mathcal M}}
	\newcommand\PP{{\mathcal P}}
	\newcommand\RR{{\mathcal R}}
	\newcommand\PMF{{\PP\kern-2pt\MM\FF}}
	\newcommand\PML{{\PP\kern-2pt\MM\LL}}
	\newcommand\ep{\epsilon}
	\newcommand\Hyp{{\mathbb H}}
	\newcommand\Z{{\mathbb Z}}
	\newcommand\EXH{{ \EE (X, \HH )}}
	\newcommand\til{\widetilde}
	\newcommand\length{\operatorname{length}}
	\newcommand{\pr}{\mathrm{pr}}
	\newcommand{\fsubd}{\mathrel{{\scriptstyle\searrow}\kern-1ex^d\kern0.5ex}}
	\newcommand{\bsubd}{\mathrel{{\scriptstyle\swarrow}\kern-1.6ex^d\kern0.8ex}}
	\newcommand{\fsubeq}{\mathrel{\raise-.7ex\hbox{$\overset{\searrow}{=}$}}}
	\newcommand{\bsubeq}{\mathrel{\raise-.7ex\hbox{$\overset{\swarrow}{=}$}}}
	\newcommand{\tsh}[1]{\left\{\kern-.9ex\left\{#1\right\}\kern-.9ex\right\}}
\begin{document}
	\title{Discontinuous motions of limit sets}
	\author{Mahan Mj}
	\address{School
		of Mathematics, Tata Institute of Fundamental Research\\ 1, Homi Bhabha Road, Mumbai-400005, India}
	
	\email{mahan@math.tifr.res.in}
	
	\author{Ken'ichi Ohshika}
		\address{Department of Mathematics, Gakushuin University, Mejiro, Toshima-ku, Tokyo, Japan}
		\email{ohshika@math.gakushuin.ac.jp}
	\subjclass[2010]{57M50} 
	
	\date{\today}
	
	\thanks{MM is   supported by  the Department of Atomic Energy, Government of India, under project no.12-R\&D-TFR-14001.
		MM is also supported in part by a Department of Science and Technology JC Bose Fellowship, CEFIPRA  project No. 5801-1, a SERB grant MTR/2017/000513, and an endowment of the Infosys Foundation via the Chandrasekharan-Infosys Virtual Centre for Random Geometry.   This material is based upon work partially supported by the National Science Foundation
		under Grant No. DMS-1928930 while MM participated in a program hosted
		by the Mathematical Sciences Research Institute in Berkeley, California, during the
		Fall 2020 semester.
		KO is supported by the JSPS Grant-in-Aid for Scientific Research (B)  17H02843}
	
	\begin{abstract}
		We characterise completely when limit sets, as parametrised by Cannon-Thurston maps, move discontinuously for a sequence of algebraically  convergent quasi-Fuchsian groups.
	\end{abstract}

	\maketitle

	\tableofcontents

\sloppy
\section{Introduction}
In [Question 14] of  \cite{thurston-bams}, Thurston raised a question about continuous motions of limit sets under algebraic deformations of Kleinian groups. This problem was formulated more precisely in \cite{mahan-series1, mahan-series2} taking into account topologies of convergence of Kleinian groups and a parametrisation of limit sets using  Cannon-Thurston maps \cite{CT, mahan-split}.
The questions can be stated as follows.

\begin{qn}\label{mainq}  \cite{thurston-bams, mahan-series1, mahan-series2}\\
	\begin{enumerate}
		\item[(1)] If a sequence of isomorphic Kleinian groups $(G_n)$ converges to $G_\infty$ algebraically then do the corresponding Cannon-Thurston maps converge pointwise?
		
		\item[(2)] If $(G_n)$ converges to $G_\infty$ strongly then do the corresponding Cannon-Thurston maps converge uniformly?
	\end{enumerate}
\end{qn}

It was established,  in \cite{mahan-series2} and \cite{mahan-cmsurvey} (crucially using technology developed in \cite{mahan-split}, \cite{mahan-kl}) that the second question has an affirmative answer. The answer to the first question has turned out to be considerably subtler. Indeed, interesting examples of both continuity and discontinuity occur naturally:

\begin{enumerate}
	\item In \cite{mahan-series1}, it was shown that if the geometric limit  $\Gamma$ of $(G_n)$ is geometrically finite, then the answer to Question \ref{mainq} (1) is affirmative. In particular, this is true for the examples given by Kerckhoff-Thurston in \cite{kerckhoff-thurston}.
	\item On the other hand, it was shown in \cite{mahan-series2}, that for certain examples of quasi-Fuchsian groups converging to geometrically infinite groups constructed by Brock \cite{brock-itn}, the answer to Question \ref{mainq} (1) is negative.
\end{enumerate}

In this paper, we shall complete the answer to Question \ref{mainq} (1) for sequences of quasi-Fuchsian groups  by characterising precisely when  limit sets move discontinuously, i.e. we isolate the fairly delicate conditions that ensure the discontinuity phenomena illustrated in \cite{mahan-series2} for Brock's examples. 
In order to do this, we need to understand possible geometric limits of sequences of Kleinian surface groups. The necessary technology was developed in \cite{OS} and \cite{OhD}.
In particular, it was shown there that there exists an embedding  of any such geometric limit into $S \times (-1,1)$. 
In the following, we shall mainly refer to \cite{OhD}, where a simplified proof for convergent sequences is given, rather than to \cite{OS}.
 
 Let $(\rho_n: \pi_1(S) \rightarrow \pslc)$ be a sequence of quasi-Fuchsian groups  converging algebraically to $\rho_\infty: \pi_1(S) \rightarrow \pslc$, where $S$ is a hyperbolic surface of finite area.
 We set $G_n=\rho_n(\pi_1(S))$ and $G_\infty=\rho_\infty(\pi_1(S))$.
 Suppose that $(G_n)$ converges geometrically to a Kleinian group $\Gamma$.
 In what follows, we need to consider ends of the non-cuspidal part $(\HHH^3/G_\infty)_0$ (resp. $(\HHH^3/\Gamma)_0$) of the algebraic (resp. geometric) limit.
For an end $e$ of either $(\HHH^3/G_\infty)_0$ or $(\HHH^3/\Gamma)_0$, if  there is a $\integers$-cusp neighbourhood whose boundary $A$ has an end tending to $e$, we say that $A$ (or the corresponding $\integers$-cusp neighbourhood) {\bf abuts} on $e$.
Abusing terminology, we also say that for a neighbourhood $E$ of $e$  the annulus $A$ or the corresponding cusp neighbourhood abuts on $E$  in this situation. Before stating the main theorem of this paper, we shall introduce some terminology.

We first explain what it means for a simply degenerate end $e$ of $(\HHH^3/G_\infty)_0$ to be {\bf coupled}. See figure below.
We shall describe this condition more precisely in \S \ref{ends}, and we just give a sketch here. Let $E$ be a neighbourhood of $e$ homeomorphic to $\Sigma \times \reals$ for an essential subsurface $\Sigma$ of $S$. The neighbourhood $E$ can be taken to project homeomorphically to a neighbourhood $\bar E$ of an end $\bar e$ of $(\HHH^3/\Gamma)_0$, the non-cuspidal part of the geometric limit.  The end $\bar e$ is required to satisfy the following.
There is another end $\bar e'$ of $(\HHH^3/\Gamma)_0$,  simply degenerate or wild,  called a {\bf partner} of $\bar e$. The ends $\bar e'$ and $\bar e$ are related  as follows. The end $\bar e'$ has a neighbourhood $\bar E'$ such that 
if we pull back   $\bar E$ and $\bar E'$ by approximate isometries to $\HHH^3/G_n$ for large $n$, then their images are both contained in a submanifold of the form $\Sigma \times (0,1)$, where $\Fr \Sigma \times (0,1)$ lies on the boundaries of Margulis tubes, and $\Sigma \times \{t\}$ is  incompressible. \cref{M_n} is a schematic representation of $\HHH^3/G_n$ and the second is a schematic representation of the limit $(\HHH^3/\Gamma)_0$.
Note that   $\bar E'$  is not required to be homeomorphic to $\Sigma \times \reals$, as indicated by the semicircular bump in the middle of $\bar E'$ in \cref{Gamma}.
In fact   $\bar E'$  may only be homeomorphic to $\Sigma' \times \reals$ for some essential subsurface $\Sigma'$  of $S$, which shares a boundary component with $\Sigma$.

\begin{figure}

	\includegraphics[height=5cm]{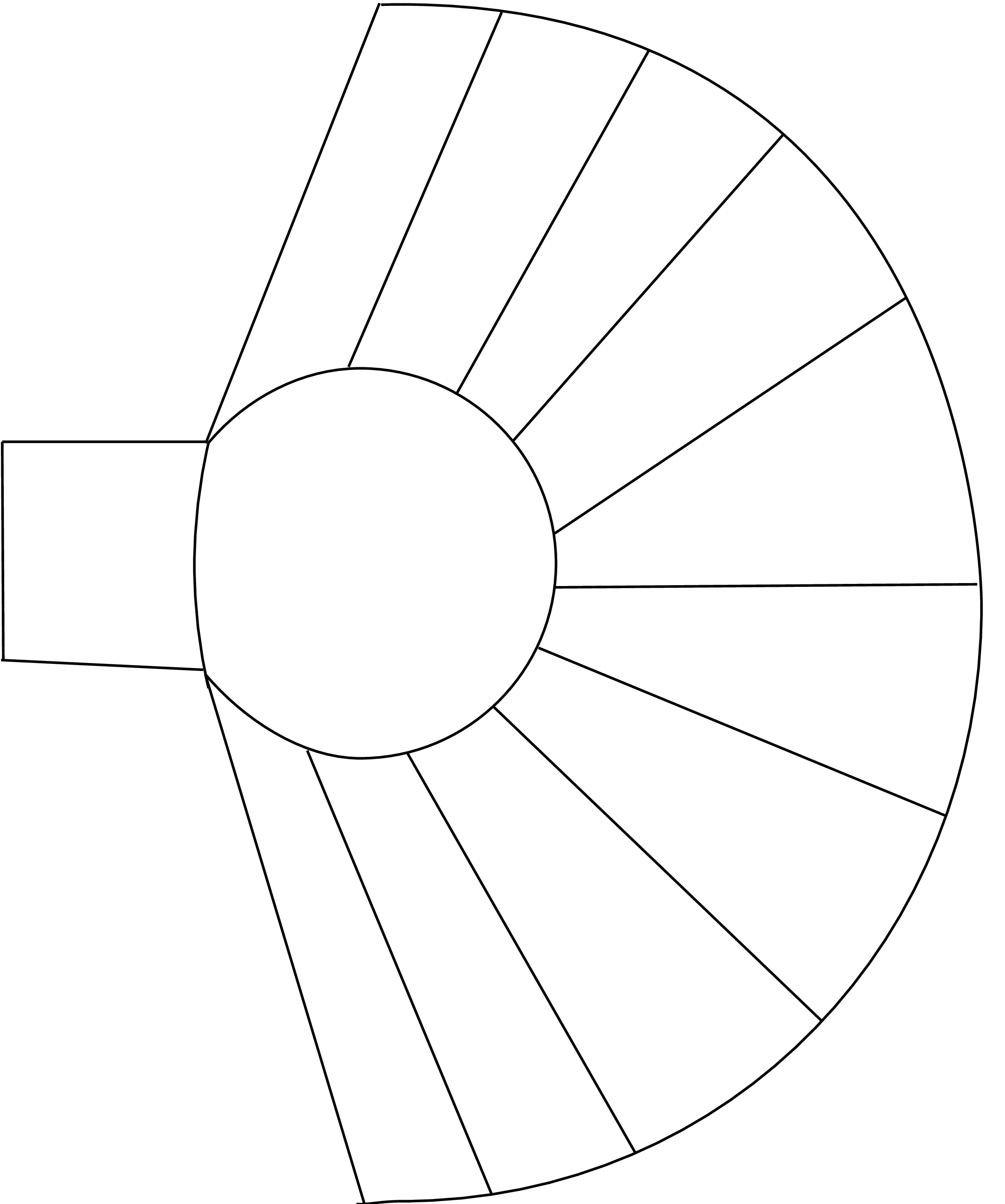}

	\caption{Schematic representation of $\HHH^3/G_n$}
	\label{M_n}
	\end{figure}

\begin{figure}

\includegraphics[height=5cm]{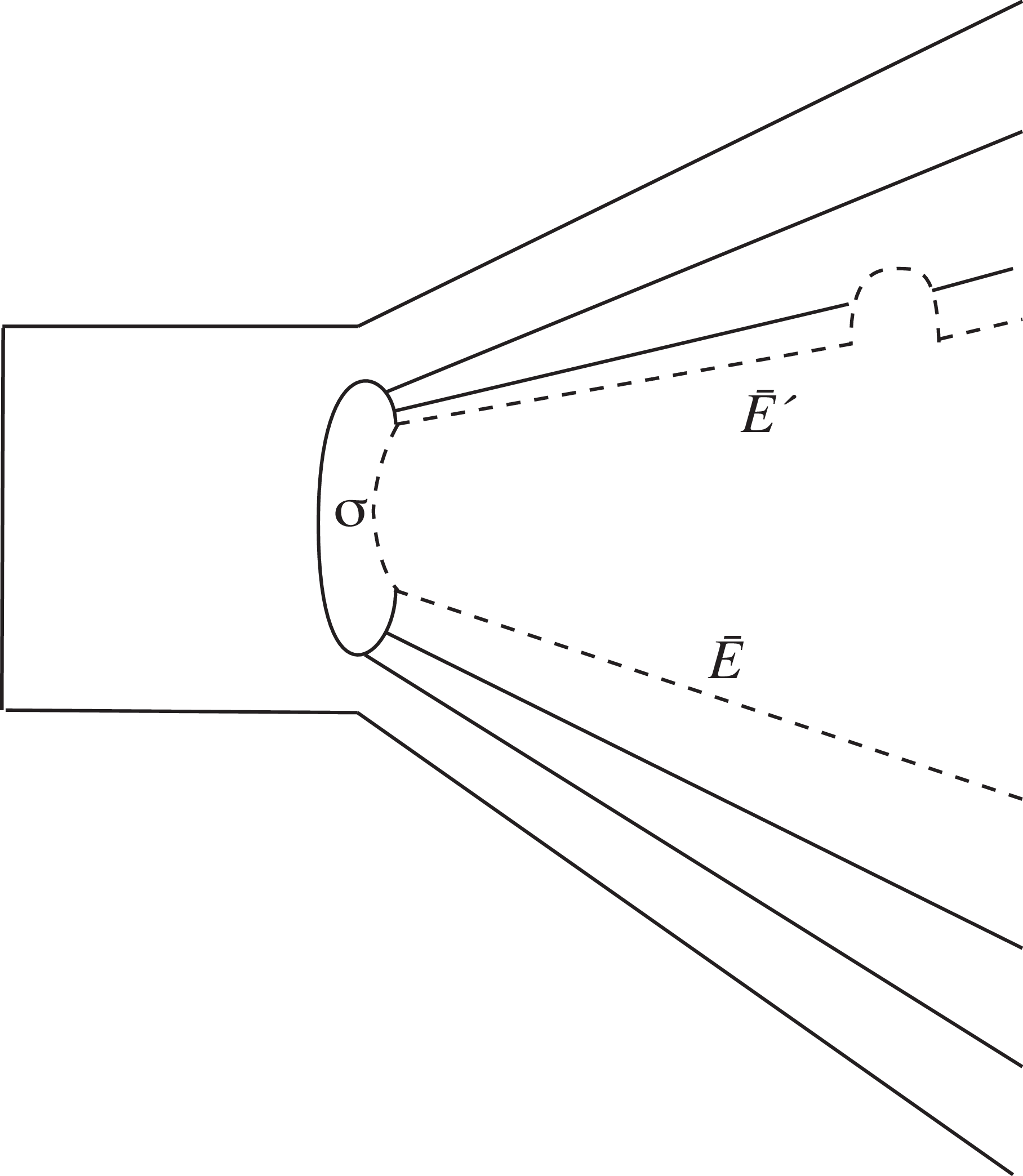}

\caption{Schematic representation of coupled ends $\bar E, \bar E'$ in $(\HHH^3/\Gamma)_0$ with conjoining cusp corresponding to $\sigma$}
\label{Gamma}
\end{figure}

  A $\integers$-cusp corresponding to a parabolic curve $\sigma$, abutting on both an end $\bar E$ and its partner
   $\bar E'$ is said to {\bf conjoin}  $\bar E$ and 
   $\bar E'$ or $e, e'$. Such a cusp will sometimes be referred to simply as a {\bf conjoining} cusp (see \cref{Gamma}).
  Such a conjoining cusp is called {\bf untwisted} if  the Dehn twist parameters of the Margulis tubes corresponding to the $\integers$-cusp are bounded uniformly all along the sequence $(\rho_n)$.
  
  Let $\mu$ be the ending lamination of the end $\bar e$.
  A  crown domain for $(\mu, \sigma)$  is said to be {\bf well-approximated} if 
    it is realised  in $\bar E'$ (with respect to the marking determined by approximate isometries).  
  See \S\ref{ends} for a more precise definition. We now state the main theorem of this paper.

\begin{theorem}
	\label{introthm1}{\rm
	Let $c_n : S^1 (=\Lambda_{\pi_1(S)}) \rightarrow \Lambda_{\rho_n(\pi_1(S))}$ denote
 the Cannon-Thurston maps  for the quasi-Fuchsian representations $\rho_n\ (n= 1, \cdots, \infty)$. Then $(c_n)$ does not converge pointwise to   $c_\infty$ if and only if all of Conditions (1)-(3) below are satisfied.
 
 \begin{enumerate}[{\rm ({\text Condition} 1:)}]
 	\item The algebraic limit has a   coupled end $e$ with a partner $e'$.
 	
 	
 	\item
	There is an untwisted cusp $\integers$-cusp corresponding to a parabolic curve $\sigma$, conjoining $e, e'$. 
 	
 	\item Let $\mu$ be the ending lamination of the end $\bar e$. There exists a well-approximated crown domain for $(\mu, \sigma)$.
 \end{enumerate}}
\end{theorem}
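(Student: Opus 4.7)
The plan is to prove the two implications separately, relying on the embedding of the geometric limit into $S \times (-1,1)$ from \cite{OhD} and the split-geometry/model-manifold description of Cannon-Thurston maps developed in \cite{mahan-split, mahan-kl}. After passing to a subsequence, fix a geometric limit $\Gamma \supset G_\infty$ and approximate isometries $\phi_n$ from exhausting compact submanifolds of $\HHH^3/\Gamma$ onto submanifolds of $\HHH^3/G_n$. Pointwise convergence of $c_n$ to $c_\infty$ away from preimages of parabolic fixed points and endpoints of ending-lamination leaves follows from the machinery used in \cite{mahan-series2}, so the whole problem concentrates on the behaviour of $c_n(x_0)$ when $c_\infty(x_0)$ is a parabolic fixed point; in that situation one must locate the ``extra'' limit of $c_n(x_0)$ and show it is accounted for precisely by the geometric data in Conditions (1)--(3).

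For the sufficiency direction, suppose Conditions (1)--(3) hold and let $C \subset S$ be a well-approximated crown domain for $(\mu, \sigma)$ realised in $\bar E'$. Pick a boundary leaf $\ell$ of $C$ whose two ideal endpoints lie on $\sigma$; lifting to $\HHH^2 = \til{S}$ gives $x,y \in S^1$ with $c_\infty(x) = c_\infty(y)$ equal to the parabolic fixed point of $\sigma$ in $\Lambda_{G_\infty}$ by the standard leaf-ends-at-cusp argument of \cite{mahan-split}. I would then track the geodesic realisation of $\ell$ in $\HHH^3/G_n$ produced by pulling back the realisation of $C$ in $\bar E'$ via $\phi_n^{-1}$. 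The untwisted hypothesis keeps the Margulis tube corresponding to $\sigma$ in bounded combinatorial position, and realisation in the partner end forces the two endpoints of $\ell$ on $\partial \HHH^3$ to converge to distinct points of $\Lambda_{G_\infty}$ separated by the parabolic fixed point. Hence at least one of $c_n(x), c_n(y)$ does not converge to that parabolic fixed point, which furnishes the desired discontinuity.

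For the necessity direction, assume $c_n(x_0) \not\to c_\infty(x_0)$ for some $x_0 \in S^1$. The geometrically finite case is excluded by \cite{mahan-series1}, and bounded-geometry arguments from \cite{mahan-split, mahan-kl} imply that a discontinuity can occur only when $c_\infty(x_0)$ is a parabolic fixed point attached to some $\integers$-cusp $\sigma$ abutting a simply degenerate end $e$ of $(\HHH^3/G_\infty)_0$, with $x_0$ the endpoint of a leaf of the ending lamination $\mu$ terminating at $\sigma$. Next, I would use the description of $(\HHH^3/\Gamma)_0$ from \cite{OhD} to analyse the end $\bar e$: if the cusp corresponding to $\sigma$ in $\HHH^3/\Gamma$ abutted only $\bar e$, convergence of the approximate isometries across the cusp would force $c_n(x_0) \to c_\infty(x_0)$, so there must exist a partner end $\bar e'$ conjoined to $\bar e$ by $\sigma$, giving Conditions (1) and (2). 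If $\sigma$ were twisted, the Dehn twist parameters along the sequence would collapse the two sides of $\sigma$ together in $\Lambda_{G_n}$ and again give convergence, so $\sigma$ must be untwisted. Finally, in the absence of a well-approximated crown domain for $(\mu, \sigma)$, the leaves of $\mu$ terminating at $\sigma$ would remain in the thin part near the cusp in the $\phi_n$-pullback, again yielding $c_n(x_0) \to c_\infty(x_0)$; hence Condition (3) must hold.

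The main obstacle is the necessity of Condition (3): one must argue quantitatively that the absence of a well-approximated crown domain confines the realisations of the leaves of $\mu$ ending at $\sigma$ to a horospherical neighbourhood in the approximating manifolds, so that their Cannon-Thurston images are driven to the parabolic fixed point. This step depends on combining the split-geometry control of \cite{mahan-split, mahan-kl}, especially the geometry of Margulis tubes converging to the conjoining cusp, with the end-structure of \cite{OhD}; the untwisted hypothesis enters at this point to guarantee that the bi-infinite trajectories seen in the partner end can actually be detected by leaves of $\mu$ rather than being absorbed by a diverging twist parameter.
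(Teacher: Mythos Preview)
Your high-level strategy matches the paper's: prove discontinuity at crown-tips under Conditions (1)--(3), and prove convergence everywhere else. The sufficiency argument is essentially the paper's, though your phrase ``a boundary leaf $\ell$ of $C$ whose two ideal endpoints lie on $\sigma$'' is garbled---no leaf of $\tilde\lambda$ has endpoints on $\tilde\sigma$. What is needed is a leaf of $\lambda_C$ having a crown-tip $\zeta$ as an endpoint; the paper tracks $c_n(\zeta)$ directly and shows it converges to an endpoint $p_\mu$ of a realised leaf rather than to the parabolic fixed point $p_\sigma = c_\infty(\zeta)$.

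The genuine gap is in your necessity direction, specifically the reduction to crown-tips. You assert that convergence away from preimages of parabolic fixed points and lamination-leaf endpoints ``follows from the machinery used in \cite{mahan-series2}'' and that ``bounded-geometry arguments from \cite{mahan-split, mahan-kl}'' localise any discontinuity to such points. The paper explicitly contradicts this: its Section~4 states the argument ``follows the broad scheme of \cite[Section 5.5]{mahan-series2} but technically is considerably more involved'', with ``several cases that did not arise in \cite{mahan-series2}.'' Here the geometric limit $\Gamma$ can be infinitely generated with wild ends, and establishing EPP at non-crown-tip points requires a Case~I (infinite electric length) handled by an inductive electrocution of lifts of the extended bricks $\mathbf{E}_j^{\mathrm{ext}}, \mathbf{F}_j^{\mathrm{ext}}$ in $\tilde{\mathbf M}_\Gamma$, ordered by a notion of minimality (Lemmas~\ref{quasi-convex}--\ref{uniformly farther}), and a Case~II with four subcases according to how the ray $r_\zeta$ eventually sits relative to the subsurfaces $\Sigma_j,\Sigma'_j,T_j,T'_j$. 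None of this is available in \cite{mahan-series2}, and your proposal does not supply it. Your phrase ``leaf of the ending lamination $\mu$ terminating at $\sigma$'' is also incorrect: $\mu$ is minimal and disjoint from $\sigma$, so its leaves do not terminate at $\sigma$; the crown-tip is an ideal vertex of a complementary polygon, not an endpoint of a lift of $\sigma$.

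Your treatment of the crown-tip cases is heuristically in the right direction but too coarse to count as a proof. For example, when $C$ is not well-approximated (the paper's Proposition~\ref{wellapprox}), the mechanism is not that leaves ``remain in the thin part near the cusp'' but rather that the pleated surfaces $f_n\colon S\to\HHH^3/G_n$ realising $\lambda$ cannot converge geometrically (else $C$ would be well-approximated), forcing the induced metrics $m_n$ on $\Sigma$ to degenerate and the distance from the basepoint to $f_n(\Sigma)$ to diverge; it is this divergence that pushes $c_n(\zeta)$ to the parabolic fixed point. Similarly, for the non-coupled/non-conjoining case (Proposition~\ref{non-coupled}) the paper uses that both vertical annuli of the Margulis-tube boundary have moduli going to $\infty$, which your ``convergence of the approximate isometries across the cusp'' does not capture.
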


\begin{rmk}{\rm
Condition 1 is concerned with the type of geometric limit.  However, Conditions 2 and 3 deal with the} {\it manner} {\rm in which the sequence converges geometrically.

In fact Theorem \ref{introthm1} shows that we  can construct examples of two sequences of quasi-Fuchsian groups having the same geometric limit and also the same algebraic limit; however, the sequence of  Cannon-Thurston maps have radically different behaviour. For one sequence, the Cannon-Thurston maps converge pointwise, for the other they fail to do so.

It is this subtlety that is captured by the somewhat technical nature of the statement of Theorem \ref{introthm1} above.}
\end{rmk}

We shall also characterise the points where this discontinuity occurs (see Theorem \ref{non-continuous}). These turn out to be exactly the tips of crown domains as in \cite{mahan-series2}.
	
\begin{theorem}
	\label{introthm2}{\rm
	In the setting of Theorem \ref{introthm1}, suppose that
	\begin{enumerate}
	\item  $(\HHH^3/G_\infty)_0$ has a coupled simply degenerate end $e$ with  ending lamination $\lambda$,
	\item an untwisted $\integers$-cusp corresponding to a parabolic curve $s$ abutting on the image of $e$ in $(\HHH^3/\Gamma)_0$,
	\item  there exists a well approximated crown domain for $(\lambda,s)$.
	\end{enumerate}
	 
	Then for $\zeta \in S^1$, the images $c_n(\zeta)$ do not converge to $c_\infty(\zeta)$ if and only if $\zeta$ is a tip of a crown domain $C$ for $(\mu, \sigma)$, where 
	
	\begin{enumerate}
		\item
		$\mu \cup \sigma$ is contained  in either the union of the lower parabolic curves and the lower ending laminations (denoted by $\lambda_-$), or the union of  the upper parabolic curves and the upper ending laminations, (denoted by  $\lambda_+$);
		\item 
		the crown domain $C$ is well approximated; 
		\item
		the simple closed curve $\sigma$  corresponds to an  untwisted conjoining cusp abutting on the projection of the end  in $(\HHH^3/\Gamma)_0$ for which $\mu$ is the ending lamination.
	\end{enumerate}}
	\end{theorem}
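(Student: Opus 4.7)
The plan is to identify $c_n(\zeta)$ and $c_\infty(\zeta)$ as asymptotic endpoints in $\overline{\HHH^3}$ of lifts $\widetilde\gamma_n,\widetilde\gamma_\infty$ of certain quasi-geodesic rays in the respective model manifolds, and then to translate pointwise convergence into a comparison of these lifts via the approximate isometries produced by geometric convergence $G_n \to \Gamma$ together with the inclusion $G_\infty \hookrightarrow \Gamma$. Concretely, $c_n(\zeta) \to c_\infty(\zeta)$ is equivalent to the statement that $\widetilde\gamma_n$ stays uniformly close to $\widetilde\gamma_\infty$ on arbitrarily long initial segments. The split-geometry model of \cite{mahan-split,mahan-kl} supplies the quasi-geodesic $\widetilde\gamma_\infty$ associated to $\zeta$, and the embedding of $(\HHH^3/\Gamma)_0$ into $S \times (-1,1)$ from \cite{OhD,OS} locates the corresponding objects in the geometric limit and, through approximate isometries, in each $\HHH^3/G_n$.

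For the sufficiency (``if'') direction, assume $\zeta$ is a tip of a well-approximated crown $C$ for $(\mu,\sigma)$ with untwisted conjoining cusp, and $\mu \cup \sigma \subset \lambda_-$ (the $\lambda_+$ case is symmetric). Let $\gamma_\infty$ be the ray in $\HHH^3/G_\infty$ exiting the end $e$ along $\mu$ and asymptotic to the tip. Because $C$ is well-approximated, its realization in the partner region $\bar E'$ pulls back by the approximate isometries to a submanifold $\Sigma \times (0,1) \subset \HHH^3/G_n$ with incompressible horizontal slices; because $\sigma$ is untwisted, the associated Margulis tube has bounded Dehn-filling data along $(\rho_n)$. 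These two facts together force the $G_n$-representative $\widetilde\gamma_n$ of $\zeta$ to cross the conjoining cusp and exit into the partner region, which for $G_n$ is an asymptotically independent direction. Arguing as in the Brock-example analysis of \cite{mahan-series2}, one shows $c_n(\zeta) \to c_\infty(\zeta')$ where $\zeta'\neq \zeta$ is the opposite tip of the edge of $C$ at $\zeta$, yielding the required failure of pointwise convergence.

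For the necessity (``only if'') direction, assume $\zeta$ is not a tip of any crown satisfying (1)--(3); we must show $c_n(\zeta) \to c_\infty(\zeta)$. If $\zeta$ is not in the ending lamination locus of any simply degenerate end of $G_\infty$, then $\widetilde\gamma_\infty$ spends a positive fraction of time in regions of $(\HHH^3/G_\infty)_0$ where the approximate isometries are genuinely close to isometries, and convergence follows from a direct shadowing argument. If $\zeta$ lies on an ending lamination but some condition among (1)--(3) fails---either $\mu \cup \sigma \not\subset \lambda_\pm$ for any conjoining curve, or the relevant crown is not well-approximated, or every candidate conjoining cusp is twisted---then there is no mechanism in $\Gamma$ by which $\widetilde\gamma_n$ can be redirected across a partner end: in the first case there is no partner available; in the second the approximate isometries fail to carry the crown into $\bar E'$; in the third the growing Dehn-twist parameter causes the $G_n$-ray to spiral inside the ambient end rather than escape across $\sigma$. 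In each subcase the ladder/hallway construction of \cite{mahan-split} furnishes quasi-geodesics $\widetilde\gamma_n$ that shadow $\widetilde\gamma_\infty$ on larger and larger compact sets, giving $c_n(\zeta) \to c_\infty(\zeta)$.

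The principal obstacle is the final subcase: $\zeta$ on an ending lamination while one of (1)--(3) fails in a subtle way, in particular when the partner end $\bar e'$ exists but is wild, or when the crown is only ``almost'' well-approximated. Here the delicate step is to verify that the failure really does eliminate the separation mechanism rather than merely deform it, so that $c_n(\zeta)$ and $c_\infty(\zeta)$ cannot drift apart. This requires combining the embedding of $(\HHH^3/\Gamma)_0$ into $S\times(-1,1)$ from \cite{OhD} with precise control, via the split-geometry estimates of \cite{mahan-split,mahan-kl}, of how the approximate isometries see the partner region and the Margulis tubes of $\sigma$. This bookkeeping is the technical heart of the argument and accounts for the rather delicate form of the hypotheses in Theorem~\ref{introthm1}.
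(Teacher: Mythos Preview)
Your sketch captures the broad shape of the argument but has substantive gaps and at least one incorrect mechanism.

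In the ``if'' direction, your claim that $c_n(\zeta)\to c_\infty(\zeta')$ for $\zeta'$ the opposite tip of an edge of $C$ is not what actually happens. The paper shows that $c_n(\zeta)$ converges to a point $p_\mu$ which is an endpoint of the realisation in $\HHH^3/\Gamma$ of a boundary leaf of the crown; this is a point in $\Lambda_\Gamma$, not identified as $c_\infty$ of anything. What matters is only that $p_\mu\neq p_\sigma=c_\infty(\zeta)$, which follows because well-approximation gives a genuine geodesic in the partner end with two distinct ideal endpoints. The untwisted hypothesis enters to bound the length of the arc bridging the Margulis tube, keeping the basepoint on the realised leaf close to $o_{\HHH^3}$.

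In the ``only if'' direction, your dichotomy ``$\zeta$ not in the ending lamination locus'' versus ``$\zeta$ on an ending lamination'' is not the right one. The paper's split is crown-tip versus non-crown-tip, and the non-crown-tip case (Section~\ref{noncrown}) includes many points that \emph{are} endpoints of leaves of ending laminations. That case is handled not by a ``direct shadowing argument'' but via the EPP criterion (Proposition~\ref{ptwisecrit}), which reduces the problem to showing that geodesic segments between far-out points on $\Phi_n\circ r_\zeta$ stay far from the basepoint uniformly in $n$. Establishing EPP requires an iterated electrocution of lifts of end-neighbourhoods $\mathbf E_j^{\mathrm{ext}}$, proving their quasi-convexity (Lemma~\ref{quasi-convex}), and then a separate case analysis (infinite electric length versus the ray eventually trapped in one subsurface). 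None of this machinery appears in your outline, and the ladder construction you invoke does not by itself give uniformity in $n$.

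Finally, your description of the twisted-cusp mechanism is backwards. The ray does not ``spiral inside the ambient end rather than escape''; rather (Proposition~\ref{twistedcusp}), the bridging arc across the lift $\tilde P^n$ of the Margulis tube has length going to infinity, so the geodesic to $c_n(\zeta)$ spends arbitrarily long inside $\tilde P^n$. Since $\tilde P^n$ converges to a horoball based at $c_\infty(\zeta)$, this forces $c_n(\zeta)\to c_\infty(\zeta)$. The same Margulis-tube argument handles the non-coupled and non-conjoining cases (Proposition~\ref{non-coupled}), while the not-well-approximated case (Proposition~\ref{wellapprox}) uses divergence of the pleated surfaces realising $\lambda$.
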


\section{Preliminaries}
\subsection{Some basic material for hyperbolic 3-manifolds}
Let $(S,m)$ be a complete hyperbolic surface of finite area, possibly with cusps.
A geodesic lamination on $(S,m)$ is a closed subset of $S$ which is a disjoint union of simple geodesics.
The notion of geodesic lamination depends on the hyperbolic metric $m$.
However, given any geodesic lamination $\lambda$ on $(S,m)$ and any  complete hyperbolic metric $m'$ on $S$, there is a unique geodesic lamination $\lambda'$ on $(S,m')$ which is ambient isotopic to $\lambda$.

For a  surface $S$ as above and a hyperbolic 3-manifold $M$, a continuous $f \colon S \to M$ is said to be a pleated surface if there is a complete hyperbolic metric $m$ on $S$ and a geodesic lamination $\lambda$ on
$(S,m)$ such that $f$ maps each leaf of $\lambda$ to a geodesic, and each component of $S \setminus \lambda$ totally geodesically. The geodesic lamination $\lambda$ is called the pleating locus of the pleated surface
$f \colon S \to M$.
A geodesic lamination $\lambda$ on $S$ (with some fixed complete hyperbolic metric) is said to be {\bf realised} by a pleated surface $f \colon S \to M$ if 
$\lambda$ is ambient isotopic to the pleating locus of the pleated surface
$f \colon S \to M$.


\subsection{Relative hyperbolicity and electrocution} 
\label{rel hyp}
We refer the reader to \cite{farb-relhyp} and \cite{bowditch-relhyp} for generalities on relative hyperbolicity and to \cite{mahan-ibdd} and \cite{mahan-split} for the notions of electrocution and electro-ambient paths. 
We shall briefly recall the notion of electro-ambient quasi-geodesics, c.f.\ \cite{mahan-split}.
Let $(X,d_X)$ be a $\delta$-hyperbolic metric
space. 
Bowditch showed in \cite{bowditch-relhyp} that if there are constants $C, D$ and a family $\mathcal K$ of $D$-separated, $C$-quasi-convex sets in $X$, then $X$ is (weakly) hyperbolic relative to $\mathcal K$.
Now let $\mathcal H$ be a collection of $C$-quasi-convex sets in $(X, d_X)$, without assuming the $D$-separated condition.
 Let $\EXH$ denote the  space obtained by electrocuting the elements of $\mathcal H$ in $X$: this space is a union of $X$ and $\sqcup_{H\in \mathcal H} H \times [0,1/2]$, where $H \times \{0\}$ is identified with $H$ in $X$, each $\{h\} \times [0,1/2]$ is isometric to $ [0,1/2]$, and $H \times \{1/2\}$ is equipped with the zero metric.
Since $\{H \times \{1/2\}\}$ is $1$-separated, we can apply Bowditch's result, and see that $\EXH$ is Gromov hyperbolic.

Let $\alpha = [a,b]$ be a geodesic in $(X,d_X)$, and 
$\beta $ 
an electric 
quasi-geodesic without backtracking 
joining $a, b$ in $\EXH$, i.e.\ an electric quasi-geodesic which does not return to an element $H \in \HH$ after leaving it. 
We further assume that the intersection of $\beta$ and $H \times (0,1/2)$ is either empty or a disjoint union of open arcs of the form $\{h\} \times (0,1/2)$.
We parametrise $\beta$, and consider the maximal subsegments of $\beta$
contained entirely in some $H \times \{1/2\}$ (for  some $H \in \mathcal{H}$). 
We extend each  such maximal subsegment  by adjoining `vertical' subsegments (of the form $h \times [0, 1/2])$ in $\beta$ at  its endpoints
to obtain a path of the form $\{ p \} \times [0,1/2]\cup [p, q]\times\{1/2\}
 \cup \{ q \} \times [0,1/2]$.
 We call 
these subpaths of $\beta$ {\it  extended  maximal subsegments}.
We replace each extended  maximal subsegment  in $\beta$
by a   geodesic path in $(X,d_X)$ joining the same endpoints.

The resulting
path $\beta_q$ is called an {\bf electro-ambient representative} of $\beta$ in
$X$. Also, if $\beta $ is
an electric  $P$-quasi-geodesic 
without backtracking (in $\EE(X,\HH )$), then $\beta_q$ is called an {\bf electro-ambient $P$-quasi-geodesic}.
If  $\beta $ is
an electric  geodesic  without backtracking, then $\beta_q$ is simply called an {\bf electro-ambient
	quasi-geodesic}.
	The following lemma says that hyperbolic geodesics do not go far from electro-ambient quasi-geodesic realisations.

\begin{lemma}{\rm (\cite[Proposition 4.3]{klarreich}, \cite[Lemma
	3.10]{mahan-ibdd} and \cite[Lemma 2.5]{mahan-split})}
		For given non-negative numbers $\delta$, $C$ and $P$,  there exists $R$ such that the following
	holds: \\
	Let $(X,d_X)$ be a $\delta$-hyperbolic metric space and $\mathcal{H}$ a
	family of $C$-quasi-convex
	subsets of $X$. Let $(\EE(X,\HH ),d_e)$ denote the electric space obtained by
	electrocuting the elements of $\mathcal{H}$.  Then, $(\EE(X,\HH ),d_e)$ is Gromov hyperbolic, and  if $\alpha , \beta_q$
	denote respectively a geodesic arc with respect to $d_X$, and an electro-ambient
	$P$-quasi-geodesic with the same endpoints in $X$, then $\alpha$ lies in the	$R$-neighbourhood of $\beta_q$ with respect to $d_X$.
	\label{ea-strong}
	\end{lemma}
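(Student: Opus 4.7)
The plan is to split the proof into two stages: first establish Gromov hyperbolicity of $\EXH$, then promote the resulting electric Hausdorff comparison to a $d_X$-comparison between $\alpha$ and $\beta_q$ via a bounded-penetration argument.

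For hyperbolicity of $\EXH$, I would verify Bowditch's criterion directly on the intermediate strata $H \times \{1/2\}$ used in the construction. Each of these carries the zero metric, hence is trivially quasi-convex, and the gluing scheme makes the family $\{H \times \{1/2\}\}_{H\in\HH}$ a $1$-separated family. Combined with the $\delta$-hyperbolicity of $(X, d_X)$ and the $C$-quasi-convexity of the elements of $\HH$, Bowditch's criterion yields hyperbolicity of $\EXH$ with constant $\delta_e = \delta_e(\delta, C)$.

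For the Hausdorff bound, I begin by noting that the inclusion $(X, d_X) \hookrightarrow (\EXH, d_e)$ sends the hyperbolic geodesic $\alpha$ to an electric $P'$-quasi-geodesic for some $P' = P'(\delta, C)$. In the hyperbolic space $(\EXH, d_e)$, both $\alpha$ and $\beta$ are then quasi-geodesics with the same endpoints, so standard stability of quasi-geodesics gives bounded $d_e$-Hausdorff distance depending only on $\delta_e, P, P'$. To upgrade this to a $d_X$-bound, I analyse $\beta_q$ piecewise. Outside the extended maximal subsegments, $\beta_q$ agrees with $\beta$ and lies outside the electrocuted loci, where $d_e$ and $d_X$ are bi-Lipschitz equivalent on bounded scales, so $d_e$-closeness to $\alpha$ immediately transfers. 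Inside each extended maximal subsegment, $\beta_q$ follows a hyperbolic geodesic $[p,q]$ whose endpoints both lie in a common $H \in \HH$; by $C$-quasi-convexity of $H$, this replacement geodesic stays in the $C$-neighbourhood of $H$. The corresponding sub-arc of $\alpha$ must also remain in a bounded $d_X$-neighbourhood of $H$ by the bounded-penetration property for $C$-quasi-convex subsets of a hyperbolic space, and a standard thin-triangle argument applied to $[p,q]$, the entry/exit sub-arc of $\alpha$, and short geodesics connecting their endpoints within the $C$-neighbourhood of $H$ produces the desired uniform bound.

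The main obstacle is arranging a consistent, ordered matching between the penetrations of $\alpha$ and $\beta$ into elements of $\HH$: \emph{a priori} $\alpha$ could graze many $H$'s that $\beta$ ignores, or enter a single $H$ in excursions that do not correspond to a single extended maximal subsegment of $\beta$. This is precisely where the no-backtracking hypothesis on $\beta$ is indispensable, together with the control of electric quasi-geodesics by $C$-quasi-convexity: combined, they force an ordered pairing between the entry/exit pairs of $\beta$ into the various $H \times \{1/2\}$ and the corresponding deep excursions of $\alpha$ into the $C$-neighbourhoods of those same $H$, with entry and exit points uniformly $d_X$-close. Once this pairing is established, the piecewise comparison above assembles into the claimed uniform constant $R = R(\delta, C, P)$.
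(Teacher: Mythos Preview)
The paper does not supply its own proof of this lemma; it is quoted from the cited references (Klarreich and two papers of the first author), and the hyperbolicity of $\EXH$ is already sketched in the paragraph preceding the statement, via Bowditch's criterion applied to the $1$-separated family $\{H\times\{1/2\}\}$, exactly as you propose. Your overall strategy for the Hausdorff bound---electric stability followed by a bounded-penetration upgrade---is indeed the one used in those references, and your final paragraph correctly isolates the crux: the ordered pairing of deep penetrations of $\alpha$ and $\beta$ into the various $H\in\HH$, with $d_X$-close entry and exit points, relying on the no-backtracking hypothesis.

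There is, however, a real gap in your middle paragraph. The assertion that ``outside the electrocuted loci, $d_e$ and $d_X$ are bi-Lipschitz equivalent on bounded scales, so $d_e$-closeness to $\alpha$ immediately transfers'' is false: two points of $X$ lying outside every $H$ can be $d_e$-close yet arbitrarily $d_X$-far, since a short $d_e$-path between them may route through cone points. (For a concrete instance, take $X=\reals^2$ and $\HH=\{\{n\}\times\reals:n\in\integers\}$; then $(1/2,0)$ and $(1/2,N)$ are at $d_e$-distance at most $2$ for every $N$.) Hence you cannot read off $d_X$-closeness on the ``outside'' pieces from $d_e$-closeness alone. The argument in the cited sources reverses your order of operations: one first runs the bounded-penetration argument to conclude that $\alpha$ and $\beta$ enter and exit the \emph{same} elements of $\HH$ (up to bounded error) at $d_X$-nearby points; only then can the portion of $\alpha$ between two consecutive penetrations be compared, as a $d_X$-geodesic between points $d_X$-close to $\beta_q$, via thin triangles in $(X,d_X)$. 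Equivalently, one shows directly that $\beta_q$ is a $d_X$-quasi-geodesic with constants depending on $\delta,C,P$ and then invokes Morse stability. Your sketch contains all the necessary ingredients; it is the assembly in the middle paragraph that needs repair.
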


\subsection{Cannon-Thurston Maps} We shall review known facts about Cannon-Thurston maps focusing on the case of interest in this paper.
Let $(Y,{d_Y})$ be a Cayley graph of $\pi_1(S)$ for $S$ a closed surface of genus at least 2 with respect to some finite generating system, and
set $X = \Hyp^3$.
By
adjoining the Gromov boundaries $\partial{X} (=S^2)$ and $\partial{Y}  (=S^1)$
to $X$ and $Y$ respectively, we obtain their compactifications  $\widehat{X}$ and $\widehat{Y}$ respectively.

Suppose that $\pi_1(S)$ acts on $\HHH^3$ by isometries as a Kleinian group $G$ via an isomorphism $\rho \colon \pi_1(S) \to G$, and  let $ i :Y \rightarrow X$ be a $\pi_1(S)$-equivariant injection.

\begin{definition}  
	A {\bf Cannon-Thurston map}   $\hat{i}$ (for $\rho$)  from $\widehat{Y}$ to
	$\widehat{X}$ is a continuous extension of $i$.
\end{definition}
The image of $\hat{i}$ restricted to $\partial Y$ coincides with the limit set of $G$.
It is easy to see if a Cannon-Thurston map exists, it is unique.
The notion of Cannon-Thurston map can be easily extended to the case where $S$ is a  hyperbolic surface of finite area. In this situation, it is a $\pi_1(S)$-equivariant continuous map from the relative (or Bowditch) boundary relative to the cusp subgroups,  $\partial_\infty \til{S} = \partial_\infty \Hyp^2  =S^1$, onto the limit set in $S^2$. 
 The first author \cite{mahan-elct} showed that for any Kleinian group isomorphic to a surface group (possibly with punctures), a Cannon-Thurston map always exists, and gave the following characterisation of non-injective points.
 Recall that  an isomorphism from  a Kleinian group  to another Kleinian group is said to be weakly type-preserving when every parabolic element is sent to a parabolic element.

\begin{theorem} \cite{mahan-elct}  \label{ptpre-ct} Let $S= \HHH^2/F$ be a (possibly punctured) hyperbolic surface of finite area. 
Let $\rho : F \rightarrow \pslc$ be a weakly type-preserving   discrete faithful representation with  image $G$.
Let $\lambda_1$ be the union of parabolic curves and ending laminations for upper ends and $\lambda_2$  that of the lower ends, one (or both) of which might be empty.
We regard $\lambda_1$ and $\lambda_2$ as geodesic laminations on $S$.
%

For $k=1,2$, let $\RR_k$ denote the  relation on $\partial_\infty \HHH^2$  defined as follows: $\xi \RR_k \eta$ if and only if  $\xi$ and $\eta$ are  either ideal 
 endpoints of the same  leaf of $ \tilde \lambda_k$,  or ideal boundary points of a 
 complementary ideal polygon of  $\tilde \lambda_k$, where $\tilde \lambda_k$ is the preimage of $\lambda_k$ in $\HHH^2$. 
  Denote the  transitive closure of $\RR_1\cup \RR_2$ by $\RR$.
Let $\hat i_\rho : \partial {F} \to \Lambda_G$ be the Cannon-Thurston-map for $\rho$. Then $\hat i (\xi) = \hat i (\eta)$ for $\xi,\eta \in   \partial_\infty \HHH^2 $ if and only if 
$\xi \RR \eta$.  \end{theorem}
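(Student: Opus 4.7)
The plan is to prove the two directions separately. For the ``if'' direction ($\xi \RR \eta \Rightarrow \hat i(\xi) = \hat i(\eta)$), I would first reduce to the case where $\xi \RR_k \eta$ for a single $k \in \{1,2\}$ and then propagate via transitive closure. If $\xi, \eta$ are the ideal endpoints of a lift $\til\ell$ of a leaf $\ell$ of $\lambda_k$, realise $\lambda_k$ by a sequence of pleated surfaces $S_n$ exiting the corresponding end $e_k$. Lifting to $\Hyp^3$, the realisations of $\til\ell$ in $\til S_n$ are uniformly bounded piecewise geodesic segments whose ideal endpoints on $S^2$ converge to a common point of $\Lambda_G$ (the ``endpoint'' of $e_k$ in the limit set), giving $\hat i(\xi) = \hat i(\eta)$. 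If $\ell$ is a parabolic curve, both ideal endpoints of $\til\ell$ are fixed by the corresponding parabolic element, whose unique fixed point on $S^2$ is the common image. For two ideal vertices of a complementary polygon $P$, adjacent vertices lie on a common boundary leaf, and connectedness of $\partial P$ propagates the identification to all vertex pairs.

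For the ``only if'' direction, suppose $\hat i(\xi) = \hat i(\eta) = p$ with $\xi \neq \eta$, and let $\alpha \subset \Hyp^2$ be the bi-infinite geodesic from $\xi$ to $\eta$. Using the relatively hyperbolic setup of \S\ref{rel hyp} with horoballs at parabolic fixed points (and, if necessary, deep Margulis tubes) electrocuted, I would apply Lemma \ref{ea-strong} to replace the image $i(\alpha)$ by a nearby hyperbolic quasi-geodesic in $\Hyp^3$ whose two ideal endpoints coincide at $p$. This ``closed-at-infinity'' property forces $\alpha$ to escape both ways only through regions of the convex core corresponding to $p$: either a cusp based at $p$ (if $p$ is a parabolic fixed point) or a neighbourhood of an end $e_k$ for which $p$ is an ideal endpoint.

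In the parabolic case, $\alpha$ must sit inside an electrocuted horoball based at $p$, which forces $\xi$ and $\eta$ to be the two endpoints on $S^1$ of a lift of a parabolic curve, giving $\xi \RR \eta$. In the ending-lamination case, using a sequence of pleated surfaces exiting $e_k$ that realise $\lambda_k$, together with the ``hyperbolic ladder'' and split-geometry construction of \cite{mahan-split, mahan-elct}, I would show that both terminal rays of $\alpha$ are asymptotic to $\til\lambda_k$, so that $\xi, \eta \in \overline{\til\lambda_k}$, and moreover that the entire geodesic $\alpha$ is contained in the closure of a single complementary region of $\til\lambda_k$ (possibly collapsed to a single leaf). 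Translating this back into the combinatorics of $\til\lambda_k$ identifies $\xi$ and $\eta$ as vertices of a common complementary ideal polygon or as endpoints of a single leaf, so $\xi \RR_k \eta$.

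The principal obstacle is the last combinatorial-rigidity step: converting the asymptotic statement that $\alpha$ lies near $\til\lambda_k$ into the sharp conclusion that $\xi, \eta$ are polygon-or-leaf equivalent. This requires the quasi-convexity of lifted leaves and complementary polygons of $\lambda_k$ under the pleated-surface realisation in $\Hyp^3$, combined with the minimality of $\lambda_k$: two distinct boundary points of $\Hyp^2$ whose images agree under $\hat i$ must bound such a quasi-convex piece on both sides, with no more subtle identification available. The existence and continuity of $\hat i$ established in \cite{mahan-elct} then upgrade this dichotomy into the complete characterisation via $\RR$.
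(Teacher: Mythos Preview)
The paper does not prove Theorem~\ref{ptpre-ct}; it is quoted from \cite{mahan-elct} as a black-box input and is used (notably in \S\ref{necessity}) without any argument supplied here. So there is no proof in this paper to compare your proposal against.

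That said, a brief assessment of your sketch on its own terms: the ``if'' direction is essentially right in outline, though the pleated-surface argument for leaves of an ending lamination needs the split-geometry machinery of \cite{mahan-split} (not just ``uniformly bounded piecewise geodesic segments'') to guarantee that lifts of leaves exit every compact set in $\Hyp^3$ and hence have coincident ideal endpoints. The ``only if'' direction has a structural gap: you conclude $\xi\,\RR_k\,\eta$ for a \emph{single} $k$, but the statement only promises $\xi\,\RR\,\eta$ for the transitive closure of $\RR_1\cup\RR_2$. In general one needs a finite chain alternating between $\RR_1$ and $\RR_2$ (for instance when a parabolic curve lies in both $\lambda_1$ and $\lambda_2$, or when a crown-tip for $\lambda_1$ is also an endpoint of a leaf of $\lambda_2$); your dichotomy ``parabolic case vs.\ ending-lamination case'' does not produce such chains. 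The actual argument in \cite{mahan-elct} handles this by analysing how the CT-preimage of a point decomposes along the laminations on both sides simultaneously, rather than committing to a single end at the outset.
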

	
\begin{rmk}
When $S$ has punctures, $\RR$ is strictly larger than $\RR_1 \cup \RR_2$.
In fact, a vertex of a complementary ideal polygon of $\tilde \lambda_1$ containing a lift $p$ of a puncture is related by $\RR$, but not by $\RR_1 \cup \RR_2$, to any vertex of a complementary ideal polygon of $\tilde \lambda_2$ containing $p$.
\end{rmk}

\subsection{Algebraic and Geometric Limits}

Let $(\rho_n : G \rightarrow \pslc)$ be a sequence of weakly type-preserving, discrete, faithful
representations of a fixed finitely generated torsion-free
group $G$
converging to a discrete, faithful representation $\rho_\infty \colon G \rightarrow \pslc$. Also
assume that $( \rho_n(G))$ converges 
to a  Kleinian group
$\Gamma$ as a sequence of closed subsets of $\pslc$ in the Hausdorff topology. Then $\rho_\infty(G )$ is called the 
{\bf algebraic limit} of the sequence and $\Gamma$
the {\bf geometric limit} of the sequence $(\rho_n(G))$.
If $\rho_\infty(G)=\Gamma$, we say that the limit is {\bf strong}.
We note that throughout this paper, when we talk about algebraic and geometric limits, {\em we consider a sequence of representation, and not a sequence of conjugacy classes of representations}.

There is a more geometric way to think  of geometric limits (see \cite{thurstonnotes} and \cite{CEG}). 
A sequence of manifolds
	with basepoints $\{(N_i, x_i )\}$ is said to converge  geometrically 
	to a manifold with basepoint $(N, x_\infty)$ if for any $R>0$ and $K>1$, there exist $i_0(R,K)>0$ and compact submanifolds $C_i\subset N_i$ and $C\subset N$ containing  $R$-balls around $x_i$ and $x_\infty$ respectively, such that there exist  $K$-bi-Lipschitz maps 
	$h_i\colon C_i \to C$  for all $i \geq i_0(R)$.
A sequence of Kleinian groups $(G_n)$ converges geometrically to $\Gamma$ if and only if for a fixed basepoint $x \in \HHH^3$ and its projections $x_n$ and $x_\infty$ in $\HHH^3/G_n$ and $\HHH^3/\Gamma$, the sequence $\{(\HHH^3/G_n, x_n)\}$ converges geometrically to $(\HHH^3/\Gamma, x_\infty)$.
\medskip

\subsection{Criteria for Uniform/Pointwise convergence}
We recall some material from \cite{mahan-series1, mahan-series2}.
  Let $G$ be a fixed finitely generated
	Kleinian group, and $(\rho_n (G) = G_n)$  a weakly type-preserving sequence
	of  Kleinian groups converging algebraically to  $ 
	G_\infty = \rho_\infty (G)$.  Also fix a basepoint $o_{\HHH^3} \in \Hyp^3$. Let $d_G$ denote
	the distance in a Cayley graph of $G$ and $d$ 
	the distance in $\Hyp^3$. Also $[g,h]$ denotes a geodesic in $G$ joining $g$ with $h$, and 
	$[\rho_n(g) (o_{\HHH^3}), \rho_n(h) (o_{\HHH^3})]$ denotes a geodesic in $\HHH^3$ joining  $\rho_n(g) (o_{\HHH^3})$ with $\rho_n(h) (o_{\HHH^3})$.

	\begin{definition} \label{uep}
	The sequence 
	$(\rho_n)$ is said to have the
	{\bf  Uniform Embedding of Points} property (UEP for short)
	if there
	exists a non-negative function  $f(N)$, with
	$f(N)\rightarrow\infty$ as $N\rightarrow\infty$, such that for all $g
	\in \Ga$, 
	$d_\Ga (1,g) \geq N$ implies  $d (\rho_n(g) (o_{\HHH^3}) , o_{\HHH^3}) \geq
	f(N)$ for all $n = 1, \cdots, \infty$. 
	
	The sequence 
	$(\rho_n)$ is said to have the
	{\bf  Uniform Embedding of Pairs of Points} property (UEPP for short)
	if there
	exists a non-negative function  $f(N)$, with
	$f(N)\rightarrow\infty$ as $N\rightarrow\infty$, such that for all $g, h
	\in \Ga$, $d_\Ga (1,[g,h] ) \geq N$ implies  $d ([\rho_n(g) (o_{\HHH^3}), \rho_n(h) (o_{\HHH^3})] , o_{\HHH^3}) \geq
	f(N)$ for all $n = 1, \ldots, \infty$.
\end{definition}

The property UEP is used in \cite{mahan-series1} to give a sufficient criterion to ensure that
algebraic convergence is also geometric. The property UEPP is used to give the following 
criterion for proving uniform convergence of Cannon-Thurston maps.

\begin{prop}[\cite{mahan-series1}]\label{unifcrit1} Let $\Ga$ be a geometrically finite Kleinian group  and let $\rho_n: \Ga \to G_n$ be  weakly type-preserving  isomorphisms to Kleinian groups. 
	Suppose that $(\rho_n)$ converges algebraically to a 
	representation $\rho_{\infty}$. If   $(\rho_n)$ satisfies UEPP, the corresponding Cannon-Thurston maps  
	converge uniformly. 
\end{prop}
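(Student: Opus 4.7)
The plan is to exploit UEPP together with algebraic convergence and the fact that any ball of fixed radius in the Cayley graph of the finitely generated group $\Gamma$ is finite. Uniform convergence $c_n \to c_\infty$ amounts to bounding, uniformly in $\zeta \in \partial \Gamma$ as $n \to \infty$, the visual distance $d_o(\hat c_n(\zeta), \hat c_\infty(\zeta))$ on $S^2$ from a fixed basepoint $o \in \Hyp^3$, equivalently to showing that the Gromov product $\langle \hat c_n(\zeta), \hat c_\infty(\zeta) \rangle_o$ tends to infinity uniformly in $\zeta$.

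First I would set up the electro-ambient framework of \S\ref{rel hyp}. Since $\Gamma$ is geometrically finite it is relatively hyperbolic with respect to its parabolic subgroups $\HH_\Gamma$, and $\Hyp^3$ carries a uniformly quasi-convex family of Margulis horoballs $\HH_{\Hyp^3}$. Electrocuting these in the Cayley graph of $\Gamma$ and in $\Hyp^3$ yields $\delta_0$-hyperbolic spaces in which, by Lemma~\ref{ea-strong}, hyperbolic geodesics lie in a uniform $R_0$-neighbourhood of electro-ambient representatives of corresponding electric geodesics. The relative (Bowditch) boundary $\partial \Gamma$ is compact.

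The central step is to anchor each $\zeta$ by a group element of bounded Cayley length. Given $\epsilon > 0$, choose $R = R(\epsilon)$ so that $\langle \xi, \eta \rangle_o \geq R$ forces $d_o(\xi, \eta) < \epsilon$, and then use UEPP to pick $N$ with $f(N) > R + C$ for a constant $C$ absorbing $\delta_0$ and $R_0$. For each $\zeta \in \partial \Gamma$ fix an electric geodesic ray $(g_k^\zeta)_{k \geq 0}$ in the coned-off Cayley graph of $\Gamma$ from $1$ to $\zeta$, and set $g_\zeta := g_N^\zeta$. Applying UEPP to every subsegment $[g_\zeta, g_k^\zeta]$ (each at Cayley distance exactly $N$ from $1$) and letting $k \to \infty$, the electro-ambient representative of the image arc from $\rho_n(g_\zeta)(o)$ to $\hat c_n(\zeta)$ stays outside $B_{f(N)}(o)$; together with Lemma~\ref{ea-strong} this gives $\langle \rho_n(g_\zeta)(o), \hat c_n(\zeta) \rangle_o \geq f(N) - C/2$, uniformly in $n$ (including $n = \infty$). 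Since the Cayley ball of radius $N$ in $\Gamma$ is finite, algebraic convergence is uniform on it, so for $n \geq n_0(\epsilon)$ one has $d(\rho_n(g_\zeta)(o), \rho_\infty(g_\zeta)(o)) < 1$ for every $\zeta$. Combining with the four-point inequality for Gromov products, and noting that UEPP also forces $d(o, \rho_n(g_\zeta)(o)) \geq f(N)$, one obtains $\langle \hat c_n(\zeta), \hat c_\infty(\zeta) \rangle_o \geq R$, hence $d_o(\hat c_n(\zeta), \hat c_\infty(\zeta)) < \epsilon$ uniformly in $\zeta$.

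The hard part is the parabolic contribution. A Cayley ray to $\zeta$ may plunge deeply into a coset of a parabolic subgroup, and its $\rho_n$-image then disappears into a horoball of $\Hyp^3$ and ceases to be a quasi-geodesic there, so a direct thin-triangle estimate in $\Hyp^3$ fails. Electrocution is designed precisely to suppress these excursions, and the technical core of the argument is to verify that UEPP, stated for ordinary Cayley geodesics, still controls the electro-ambient representative of the image after both sides are coned off; this uses the uniform quasi-convexity of horoballs, Lemma~\ref{ea-strong}, and the weakly type-preserving hypothesis, which ensures that parabolic cosets of $\Gamma$ are sent into horoballs of $\Hyp^3$ uniformly in $n$.
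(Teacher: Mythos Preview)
The paper does not give its own proof of this proposition: it is simply quoted from \cite{mahan-series1} as background, with no argument supplied. So there is no ``paper's proof'' to compare against; the relevant comparison is with the original source.

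Your outline follows the strategy of \cite{mahan-series1} closely: anchor each $\zeta\in\partial\Gamma$ by a group element $g_\zeta$ at Cayley distance $N$, use UEPP on terminal subsegments of a geodesic ray to push the images $[\rho_n(g_\zeta)(o),\rho_n(g_k)(o)]$ (and their limit, the ray to $c_n(\zeta)$) outside a large ball, use the finiteness of the $N$-ball in $\Gamma$ to make algebraic convergence uniform over the anchors, and conclude via Gromov products. The chain of inequalities you indicate does yield $\langle c_n(\zeta),c_\infty(\zeta)\rangle_o\geq f(N)-C$ and hence uniform convergence. One small point: to get $d(o,\rho_n(g_\zeta)(o))\geq f(N)$ you are implicitly applying UEPP to the degenerate segment $[g_\zeta,g_\zeta]$; this is fine, but it is really UEP at that step.

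The one place to be careful is exactly where you flag it. UEPP, as stated in Definition~\ref{uep}, is formulated for \emph{word-metric} geodesics $[g,h]$, whereas you take $(g_k^\zeta)$ to be an \emph{electric} geodesic ray in the coned-off Cayley graph. These do not agree inside parabolic cosets, so the hypothesis $d_\Gamma(1,[g_\zeta,g_k^\zeta])\geq N$ of UEPP is not immediate from your choice of ray. The clean fix is to take an ordinary Cayley geodesic ray to $\zeta$ (these exist for every boundary point, including parabolic fixed points, since parabolic subgroups are undistorted), apply UEPP directly to its tail segments, and only invoke the electro-ambient machinery on the $\Hyp^3$ side to pass from $[\rho_n(g_\zeta)(o),\rho_n(g_k)(o)]$ to the ray $[\rho_n(g_\zeta)(o),c_n(\zeta))$ through the horoballs. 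The weakly type-preserving hypothesis is what makes the horoball families uniformly comparable across $n$, as you note.
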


\noindent {\bf Notation:} We shall henceforth fix a complete hyperbolic structure of finite area  on $S$ and
 a Fuchsian group $G$ corresponding to the hyperbolic structure.
The limit set $\Lambda_G$ is homeomorphic to $S^1$.
Similarly, 
 $\Lambda_{G_n}$ will denote the limit set of $G_n$ setting $G_n = \rho_n(G)$.
For each $G_n$ $(n \in \naturals \text{ or } n=\infty)$, we shall denote the corresponding Cannon-Thurston map by $c_n : S^1=\Lambda_G \rightarrow \Lambda_{G_n}$.

For pointwise convergence of Kleinian surface groups, a weaker condition called  EPP is sufficient.
This condition depends on  points of $\Lambda_G$.

\begin{prop}[\cite{mahan-series1}] \label{ptwisecrit}
	Let $G \cong \pi_1(S)$ be a Fuchsian group corresponding to a hyperbolic surface $S$ of finite area. 
	Take $\xi \in \Lambda_G=S^1_\infty$
	and let $[o_{\HHH^2},\xi )$ be a geodesic ray in $\HHH^2$ from a fixed basepoint $o_{\HHH^2}$ to $\xi$. Let $o_{\HHH^3} \in \Hyp^3$
	be a fixed basepoint in $\HHH^3$. 
	\begin{enumerate}
	\item Suppose that $(\rho_n : G \rightarrow \pslc)$ is a sequence
	of weakly type-preserving discrete faithful representations converging  algebraically to $\rho_\infty : G \rightarrow \pslc$.
	Set $G_n = \rho_n (G) , n = 1, \cdots , \infty$ and $M_n = \Hyp^3 / G_n$. 
	\item Let $\phi_n: S \to M_n$ be an incompressible embedding inducing $\rho_n$ at the level of fundamental groups. Let  $\Phi_n: \HHH^2\to \HHH^3$ be a lift of $\phi_n$ (in particular, $\Phi_n$ is an embedding).
	\end{enumerate}

	Then 
	the Cannon-Thurston maps for  the $\rho_n$ converge  to the Cannon-Thurston map for 
	$\rho_\infty$ at $\xi$ if \\
		%
		\indent {\bf EPP:} There exists a proper function $g: \natls \to \natls$ such that for any geodesic subsegment $[a,b]$ of the ray $[o_{\HHH^2}, \xi ) \subset \HHH^2$ lying outside $B_N(o_{\HHH^2})$ (the $N$-ball
		in $\HHH^2$ about $o_{\HHH^2}$),
		the geodesic in $\HHH^3$ joining $\Phi_n (a)$ with $\Phi_n (b)$ lies outside $B_{g(N)}(o_{\HHH^3})$, (the $g(N)$-ball
		about $o_{\HHH^3} \in \HHH^3$). 
\end{prop}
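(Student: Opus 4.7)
The plan is to argue by contradiction using a compactness argument in $\overline{\HHH^3}$. Assume EPP holds at $\xi$ but $c_n(\xi) \not\to c_\infty(\xi)$; passing to a subsequence (still denoted $(n)$), we may assume $c_n(\xi) \to z$ in the compact space $\overline{\HHH^3}$ with $z \neq c_\infty(\xi)$. After normalising so that $\Phi_n(o_{\HHH^2}) = o_{\HHH^3}$ and choosing the $\phi_n$ compatibly (for example, as pleated surfaces with a fixed or convergent pleating locus), algebraic convergence of $\rho_n$ upgrades to convergence $\Phi_n \to \Phi_\infty$ on compact subsets of $\HHH^2$. The target is to force $z = c_\infty(\xi)$.

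Parametrise the ray $[o_{\HHH^2}, \xi)$ and, for each $N \in \natls$, pick a point $\gamma_{t_N}$ on it just outside $B_N(o_{\HHH^2})$. For any $s > t_N$ the subsegment $[\gamma_{t_N}, \gamma_s]$ lies outside $B_N(o_{\HHH^2})$, so by EPP the hyperbolic geodesic from $\Phi_n(\gamma_{t_N})$ to $\Phi_n(\gamma_s)$ avoids $B_{g(N)}(o_{\HHH^3})$ for every $n$. Since $c_n$ exists (Theorem \ref{ptpre-ct}), $\Phi_n(\gamma_s) \to c_n(\xi)$ as $s \to \infty$; passing to this limit, the geodesic ray $r_n^N$ from $\Phi_n(\gamma_{t_N})$ to $c_n(\xi)$ is itself disjoint from $B_{g(N)}(o_{\HHH^3})$.

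Now fix $N$ and let $n \to \infty$. The convergences $\Phi_n(\gamma_{t_N}) \to \Phi_\infty(\gamma_{t_N})$ and $c_n(\xi) \to z$ imply that the rays $r_n^N$ converge in $\overline{\HHH^3}$ to the geodesic ray $r_\infty^N$ from $\Phi_\infty(\gamma_{t_N})$ to $z$, which inherits disjointness from $B_{g(N)}(o_{\HHH^3})$. The same argument run with $c_\infty(\xi)$ in place of $z$ shows that the geodesic ray from $\Phi_\infty(\gamma_{t_N})$ to $c_\infty(\xi)$ is also disjoint from $B_{g(N)}(o_{\HHH^3})$, so both rays stay away from $o_{\HHH^3}$ by an amount that tends to infinity with $N$.

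For the contradiction, let $N \to \infty$. By the defining property of the Cannon-Thurston map, $\Phi_\infty(\gamma_{t_N}) \to c_\infty(\xi)$ in $\overline{\HHH^3}$. If $z \in \HHH^3$, then for $N$ large enough that $g(N) > d(o_{\HHH^3}, z)$ the ray $r_\infty^N$ must meet $B_{g(N)}(o_{\HHH^3})$, contradiction. If $z \in \partial \HHH^3$, let $\ell$ be the bi-infinite geodesic from $c_\infty(\xi)$ to $z$ and $d_0 = d(o_{\HHH^3}, \ell) < \infty$; the rays $r_\infty^N$ converge on compact sets to $\ell$ and hence enter $B_{d_0 + 1}(o_{\HHH^3}) \subset B_{g(N)}(o_{\HHH^3})$ for $N$ large, the same contradiction. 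The main technical point to be careful with is justifying $\Phi_n(p) \to \Phi_\infty(p)$ for $p$ off the orbit $\pi_1(S) \cdot o_{\HHH^2}$: algebraic convergence of $\rho_n$ alone only gives this on the discrete orbit, so one must leverage a uniform Lipschitz or pleated-structure control on the $\phi_n$ to obtain pointwise convergence of $\Phi_n$, which in turn is what makes the two limits (in $n$ and then in $N$) commute with EPP.
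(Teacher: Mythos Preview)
Your argument is correct. The paper itself does not prove this proposition --- it is quoted from \cite{mahan-series1} --- and the only trace of a proof in the source is a (commented-out) one-line reduction: apply the UEPP criterion of Proposition~\ref{unifcrit1} to the single hyperbolic ray $[o_{\HHH^2},\xi)$ in place of the whole Cayley graph. That reduction is slick but opaque unless one unpacks the proof of Proposition~\ref{unifcrit1}; your direct compactness argument in $\overline{\HHH^3}$ is a self-contained alternative that makes the mechanism visible: EPP forces the rays $[\Phi_n(\gamma_{t_N}),c_n(\xi))$ to miss larger and larger balls about $o_{\HHH^3}$, hence any subsequential limit $z$ of $c_n(\xi)$ must be visually indistinguishable from $c_\infty(\xi)$ at $o_{\HHH^3}$.

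The technical point you flag --- passing from algebraic convergence on the orbit $\pi_1(S)\cdot o_{\HHH^2}$ to $\Phi_n(p)\to\Phi_\infty(p)$ for arbitrary $p$ on the ray --- is genuine and is exactly how the paper handles it in practice: in Section~\ref{necessity} and Section~\ref{noncrown} the maps $\Phi_n$ are normalised by $\Phi_n(o_{\HHH^2})=o_{\HHH^3}$ and are taken to be (lifts of) pleated surfaces, hence uniformly Lipschitz; equivariance plus algebraic convergence then gives convergence on compacta. Alternatively one can sidestep this entirely by replacing $\gamma_{t_N}$ with a nearby orbit point $g_N\cdot o_{\HHH^2}$ (possible since $G$ acts cocompactly on $\HHH^2$ in the closed case, or cocompactly away from cusps otherwise), at the cost only of adjusting $g$ by a bounded amount. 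Either way your outline goes through.
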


\subsection{Models of Ends of Geometric Limits of Surface Groups}
\label{model geometric} We recall some material from \cite{OhD}, where a simplified  analysis of geometric limits of algebraically convergent quasi-Fuchsian groups is given.  This is a special case of a more general result  in \cite{OS}, but will suffice for our purposes.

Let $(\rho_n)$ be a sequence  of quasi-Fuchsian representations of $\pi_1(S)$  converging to $\rho_\infty$ algebraically.
Set $G_n=\rho_n(\pi_1(S))$ and  $G_\infty=\rho_\infty(\pi_1(S))$. Further, (after passing to a subsequence if necessary) 
assume that $(G_n)$ converges geometrically to $\Gamma$.
Let $(\HHH^3/\Gamma)_0$ denote the complement of the $\epsilon_0$-cuspidal part in $\HHH^3/\Gamma$ for a constant $\epsilon_0$ less than the three-dimensional Margulis constant.  We  call $(\HHH^3/\Gamma)_0$ the non-cuspidal part of $\HHH^3/\Gamma$.

In Theorem 4.2 (1) of \cite{OhD} (a special case of a result in \cite{OS}  dealing also with  divergent sequences) it was shown that  there exists a bi-Lipschitz model manifold $\mathbf M_\Gamma^0$ of $(\HHH^3/\Gamma)_0$ admitting an embedding
   into $S\times (0,1)$. 
   Denote by $f_\Gamma : \mathbf M_\Gamma^0 \to (\HHH^3/\Gamma)_0$  the bi-Lipschitz model map.
   As was shown there, this model manifold and the model map can respectively be taken to be the non-cuspidal part of  a geometric limit $\mathbf M_\Gamma$ of Minsky's model manifolds $\mathbf M_n$ of $\HHH^3/G_n$, and the restriction to the non-cuspidal part of the limit of Minsky's model maps $f_n \colon \mathbf M_n \to \HHH^3/G_n$ as $n \rightarrow \infty$. 
  We identify the model manifold $\mathbf M_\Gamma^0$ with its embedding into $S\times (0,1)$, and we regard $f_\Gamma^{-1}$ as an embedding of $(\HHH^3/\Gamma)_0$ into $S \times (0,1)$.
 Since $\mathbf M_\Gamma$ is the union of $\mathbf M_\Gamma^0$ and cusp neighbourhoods, the embedding can be extended to $\mathbf M_\Gamma$.
 
The embedding of the model manifold $\mathbf M_\Gamma^0$ and  the model map $f_\Gamma$ can be taken to have the following properties.
(See Section 4 of \cite{OhD}.)
\begin{enumerate}
\item Each end $e$ of $(\HHH^3/\Gamma)_0$ corresponds under $f_\Gamma^{-1}$ to a level surface $\Sigma \times \{t\}$ for some essential subsurface $\Sigma$ of $S$. 
 More precisely, $\Sigma \times \{t\}$ lies in the frontier of the image of  $f_\Gamma^{-1}$.
\item Every geometrically finite end is sent into $S \times \{0, 1\}$ by $f_\Gamma^{-1}$.
\item
\label{algebraic}
There is an incompressible immersion  $\phi$ of $S$ into $(\HHH^3/\Gamma)_0$ such that the covering of $\HHH^3/\Gamma$ corresponding to $\phi_\ast \pi_1(S)$ coincides with $\HHH^3/G_\infty$. 
\item The image under $f_\Gamma^{-1}$
of the frontier of $(\HHH^3/\Gamma)_0$  consists of disjoint incompressible tori and open annuli built out of horizontal and vertical annuli.
Here we say that an (incompressible) annulus is horizontal when it is embedded in $S \times \{t\}$ for some $t$, and vertical when it has the form $a \times [s,t]$ for some essential simple closed curve $a$.
Each  torus component consists of two horizontal annuli and two vertical ones.
Each open annulus component consists of either one horizontal annulus and two vertical annuli or two horizontal annuli and three vertical annuli.
\end{enumerate}
An embedding $f_\Gamma^{-1}$ and the corresponding model map $f_\Gamma$ satisfying  the above conditions
is said to be {\bf adapted to the product structure}.

A covering associated with  the inclusion $G_\infty \subset \Gamma$ of $\mathbf M_\Gamma$ is homeomorphic to $\HHH^3/G_\infty$, and hence to $S \times (0,1)$.
Its core surface  projects to an immersion of $S$ into $\mathbf M_\Gamma^0$, such that the immersion is homotopic to $f_\Gamma^{-1} \circ \phi$ with $\phi$ as in   Property (\ref{algebraic}) above.
We call such an immersion of $S$ into $\mathbf M_\Gamma^0$ an {\bf algebraic locus}.
An algebraic locus need not be isotopic to a surface of the form $S \times \{t\}$, i.e. it need not be horizontal.
In such a case, an algebraic locus {\bf wraps around} torus boundary components of $\mathbf M_\Gamma^0$.
(See Lemma 4.3 of \cite{OhD} to see that this is the only possibility.)
We sometimes also refer to the immersion $\phi$ in Property (\ref{algebraic}) as an algebraic locus.

If an end $e$ of $(\HHH^3/\Gamma)_0$ corresponds to a level surface $\Sigma \times \{t\}$  for some proper subsurface $\Sigma$ of $S$, then the boundary $\Fr \Sigma$ represents a parabolic element of $\Ga$ contained in a maximal parabolic group  isomorphic to either $\Z$ or $\Z \times \Z$.

\subsubsection{Brick Manifolds}\label{brickmfld}
For later use, we shall give a more precise version of the discussion above in the form of Theorem \ref{thm_osa} below.
A {\bf brick} $B$ is a 3-manifold homeomorphic to $F\times J$, where $F$ is an essential subsurface of $S$  and
 $J$  is either a closed or a half-open interval.
A {\bf brick manifold} is a union of countably many bricks $F_n\times J_n$  glued to each other along essential connected subsurfaces of their horizontal boundaries
$F_n\times \partial J_n$.
We note that {\em the vertical boundary of a brick lies on the boundary of the brick manifold}.
(See Section 4.1 of \cite{OhD} for a more detailed explanation.)

With any
 end of a half-open brick in  a brick manifold $\mathbf M$, we equip either a conformal structure at infinity or an ending lamination. In the first case, the brick is called
 geometrically finite  and in the latter case, it is called simply degenerate. Accordingly, 
each half-open end of a brick is called a geometrically finite or simply degenerate end of $\mathbf M$.
The equipped ending lamination or  conformal structure is called the end invariant.
The union of ideal boundaries corresponding to the geometrically finite ends thus carries a union of conformal structures and is
called the {\bf boundary at infinity} of $\mathbf M$.  Denote the  boundary at infinity by $\partial_\infty \mathbf M$.
A brick manifold equipped with  end invariants is called {\bf a labelled brick manifold}.

A labelled brick manifold is said to admit a block decomposition if the manifold can be decomposed into 
Minsky blocks  \cite{minsky-elc1} and solid tori  such  that 
\begin{enumerate}
	\item Each block has horizontal and vertical directions coinciding with those of bricks.
	\item The block decomposition for a half-open brick agrees with a Minsky model corresponding to its end invariant.
	\item Blocks have standard metrics (as in \cite{minsky-elc1}) and  gluing maps are isometries.
	\item Solid tori are given the structure of  Margulis tubes with coefficients  determined by the block decomposition (as in \cite{minsky-elc1}).
\end{enumerate}

The 
resulting metric on the labelled brick manifold is called
 {\bf a model metric}. The next theorem, which is a combination of Theorem 4.2 and Proposition 4.12 in \cite{OhD} gives the existence of a model manifold corresponding to a geometric limit of Kleinian surface groups.

\begin{theorem}\label{thm_osa} {\rm (Theorem 4.2 and Proposition 4.12 in \cite{OhD}.)}
	Let $S$ be a  hyperbolic surface of finite area.
	Let $(\rho_n : \pi_1(S) \to \pslc)$ be a sequence of
	weakly type-preserving representations, converging geometrically 
	to $\Gamma$. Set $N = \HHH^3 / \Gamma$, and let $N_0$ denote the non-cuspidal part of $N$.
	Then there exists a labelled brick manifold $\mathbf M^0_\Gamma$ admitting  a block decomposition    and  a $K$-bi-Lipschitz homeomorphism to $N_0$
	such that the following hold: 
	\begin{enumerate}
		\item The constant $K$ depends only on $\chi(S)$.
		\item
		Each component of $\partial \mathbf M^0_\Gamma$ is either a torus or an open annulus.
		\item $\mathbf M^0_\Gamma$ has only countably many ends, and no two distinct ends lie on the same level surface $\Sigma \times \{t\}$.
		\item
		There is no properly embedded incompressible annulus in $\mathbf M^0_\Gamma$ whose boundary components lie on distinct boundary components.
		\item
		If there is an embedded, incompressible half-open annulus $S^1 \times [0,\infty)$ in $\mathbf M^0_\Gamma$ such that $S^1 \times \{t\}$ tends to a wild end $e$ of $\mathbf M^0_\Gamma$ as $t \rightarrow \infty$ (see Remark \ref{wild} below), then its core curve is freely homotopic into an open annulus component of $\partial \mathbf M^0_\Gamma$ tending to $e$.
		\item The manifold $\mathbf M^0_\Gamma$
		 is (not necessarily properly) embedded in $S \times (0,1)$ in such a way that  each brick has the form $F \times J$ where $F$ is an essential subsurface  of $S$ and $J \subset (0,1)$ is an interval. Also the product structure of  $F \times J$ is compatible with that of $S \times (0,1)$. The ends of  geometrically finite bricks lie in $S \times \{0,1\}$. Further, for a brick $F \times J$ equipped with  
		a (topological) product structure, the vertical boundary $(\partial F) \times J$ is necessarily
	contained in the boundary $\partial \mathbf M^0_\Gamma$ of $\mathbf M^0_\Gamma$. 
	\end{enumerate}
\end{theorem}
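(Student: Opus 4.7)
My plan is to construct $\mathbf M^0_\Gamma$ as the non-cuspidal part of a geometric limit of Minsky's bi-Lipschitz model manifolds for the quotients $\HHH^3/G_n$, and then verify the required properties via the approximate isometries provided by geometric convergence. By Minsky's model theorem, each $\HHH^3/G_n$ (being a surface Kleinian manifold) admits a $K_0$-bi-Lipschitz model $\mathbf M_n \cong S \times [0,1]$ with a block decomposition into Minsky blocks and Margulis tube solid tori, where $K_0$ depends only on $\chi(S)$. Denote the model map by $f_n \colon \mathbf M_n \to \HHH^3/G_n$ and let $\mathbf M^0_n$ be the non-cuspidal part.

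First, I would combine the approximate isometries $\psi_n \colon B_R(y_n) \to B_R(y_\infty) \subset \HHH^3/\Gamma$ coming from geometric convergence with the inverses $f_n^{-1}$ (restricted to pre-images of large compact sets) to transport the block--tube decomposition of $\mathbf M_n$ to a common target. Since blocks have standard shapes of bounded geometry and adjacent blocks meet in only finitely many combinatorial patterns, a diagonal subsequence argument produces a well-defined labelled brick manifold $\mathbf M^0_\Gamma$, bi-Lipschitz equivalent to $N_0 = (\HHH^3/\Gamma)_0$, with bi-Lipschitz constant $K$ depending only on $K_0$, hence only on $\chi(S)$. This yields (1), the fact that $\mathbf M^0_\Gamma$ carries a block decomposition, and the existence of the bi-Lipschitz model map $f_\Gamma$.

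Next I would verify the structural items. Property (2) follows from the fact that every boundary component of $\mathbf M^0_n$ is either the boundary of a Margulis tube (a torus) or a horizontal annulus neighbourhood of a rank-one cusp (an open annulus), and these types are preserved in the limit. For (3), each brick is a product $F\times J$ with $F$ an essential subsurface of $S$; listing the bricks in order of distance from the basepoint gives countability, and since distinct ends correspond to distinct brick ends they are naturally separated at distinct levels after a small perturbation of the embedding. For (6), one transfers the product structures $\mathbf M_n \cong S \times [0,1]$ brick by brick using the approximate isometries, checking that the horizontal and vertical directions pass to the limit consistently; since each brick inherits a product $F \times J$ with $F \subset S$ essential, the assembled object is realised as a (possibly non-proper) embedded submanifold of $S \times (0,1)$, with geometrically finite brick ends forced to lie in $S \times \{0,1\}$ by the asymptotic behaviour of the Minsky models.

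The main obstacle will be the delicate incompressibility conditions (4) and (5). For (4), if a properly embedded incompressible annulus joined two distinct boundary components of $\mathbf M^0_\Gamma$, pulling it back to $\mathbf M_n$ via an approximate isometry would produce an essential annulus joining distinct Margulis tubes or cusp annuli in $\mathbf M^0_n$, which is ruled out for Minsky models of a surface group by a standard subsurface projection/product region argument. For (5), a half-open incompressible annulus tending to a wild end $e$ gives a sequence of essential loops of bounded length accumulating on $e$; using the block structure and the fact that short curves in the model correspond to Margulis tubes, these loops must eventually lie in a single tube or cusp neighbourhood abutting on $e$, forcing the core to be freely homotopic into the prescribed open annulus component of $\partial \mathbf M^0_\Gamma$. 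These two steps, which are the technical heart of Section 4 of \cite{OhD}, require careful bookkeeping of how tubes and blocks accumulate in the geometric limit.
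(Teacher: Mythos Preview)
The paper does not give a proof of this theorem: it is quoted verbatim as an external result, explicitly attributed to Theorem~4.2 and Proposition~4.12 of \cite{OhD} (a special case of \cite{OS}). So there is no ``paper's own proof'' to compare your proposal against.

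That said, your strategy is consistent with the description the paper gives of how the result is obtained in \cite{OhD}: the model manifold $\mathbf M_\Gamma^0$ is indeed constructed as the non-cuspidal part of a geometric limit of Minsky's model manifolds $\mathbf M_n$ for $\HHH^3/G_n$, and the model map $f_\Gamma$ as the limit of the Minsky model maps $f_n$ (see the discussion in \S\ref{model geometric} just before the theorem). Your outline of items (1), (2), (3), (6) is a reasonable sketch at this level of detail. Your remarks on (4) and (5) correctly identify these as the technical core, and your own proposal already acknowledges that these steps ``require careful bookkeeping'' and are ``the technical heart of Section~4 of \cite{OhD}''---which is precisely why the present paper cites the result rather than reproving it. If you want to see the actual arguments for (4) and (5), you must consult \cite{OhD} or \cite{OS} directly; nothing in this paper supplies them.
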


The labelled brick manifold $\mathbf M^0_\Gamma$ of Theorem \ref{thm_osa}
is called a {\bf model manifold} for $N_0$ -- the
non-cuspidal part of the geometric limit. 
As was explained in the previous section, the model manifold of $\mathbf M_\Gamma$ is obtained as a geometric limit of the model manifolds $\mathbf M_n$ of $\HHH^3/\rho_n(\pi_1(S))$.
By removing cusp neighbourhoods from $\mathbf M_\Gamma$, we get $\mathbf M^0_\Gamma$.

\begin{rmk}
\label{wild}
	In general, when $\pi_1(N)$ is infinitely generated,
	the non-cuspidal part of a geometric limit $N$ as in Theorem \ref{thm_osa}   may contain an end all of whose open
	neighbourhoods contain infinitely many distinct relative ends.
	We call such an end {\bf wild}. In this case we have a sequence of relative ends accumulating (under the model map $f_\Gamma$) to some $\Sigma \times \{ t \}$, where $\Sigma$ is an essential subsurface  of $S$.
\end{rmk}

\begin{rmk}
\label{osrmk}
In \cite{OS}, the authors further show that given a family of end-invariants on a 
labelled brick manifold satisfying the conclusions of Theorem \ref{thm_osa} above,  there  exists a model manifold with those end-invariants provided only that there are no two homotopic parabolic curves or two homotopic ending laminations. Further, such a manifold is unique up to bi-Lipschitz homeomorphism.
\end{rmk}

\subsection{Special Conditions on Ends}
\label{ends}
 Recall that we have fixed a Fuchsian group $G$ with
limit set $\Lambda_G$  homeomorphic to $S^1$. Equivalently, if $S$ is closed then $\partial_\infty G = S^1$ and if $S$ is non-compact, then the relative hyperbolic boundary $\partial_h G = S^1$.
Suppose that a sequence of quasi-Fuchsian groups $(G_n = \rho_n(G))$ converges geometrically to the geometric limit $\Gamma$.
We denote by $\Lambda_{G_n}$  the limit set of $G_n$, and
by $c_n : S^1=\Lambda_G \rightarrow \Lambda_{G_n}$  the corresponding Cannon-Thurston map. We assume that $(\rho_n)$ converges to $\rho_\infty$ algebraically, and set $G_\infty=\rho_\infty(G)$.
Recall that we have a model manifold $\mathbf M_\Gamma$ with a model map $f_\Gamma \colon \mathbf M_\Gamma \to \HHH^3/\Gamma$ which are geometric limits of the model manifolds $\mathbf M_n$ of $\HHH^3/G_n$ and $f_n \colon \mathbf M_n \to \HHH^3/G_n$.
 We regard its non-cuspidal part  $\mathbf M_\Gamma^0$ as being embedded in $S \times (0,1)$.
Since  $(\mathbf M_n)$ converges to $\mathbf M_\Gamma$ geometrically, there exist $K_n$-bi-Lipschitz homeomorphisms $\mathbf h_n\colon B_{R_n}(\mathbf M_n, \mathbf x_n) \rightarrow B_{K_n R_n}(\mathbf M_\Gamma, \mathbf x_\infty)$, where $R_n \rightarrow \infty$ and $K_n \to 1$. We can also assume that $\mathbf x_\infty$ lies in the algebraic locus (i.e.\ an immersion of $S$ into the geometric limit whose fundamental group corresponds to the algebraic limit, see \S \ref{model geometric}).
In the same way, corresponding to the geometric convergence of $(G_n)$ to $\Gamma$, there exist $K_n$-bi-Lipschitz homeomorphisms $h_n \colon B_{R_n}(\HHH^3/G_n, x_n) \to B_{K_nR_n}(\HHH^3/\Gamma, x_\infty)$ with $x_\infty$ lying on the algebraic locus.

We shall now describe the conditions that appear in the main theorem.\\

\noindent{\bf Coupled ends.}
Let $e$ be a simply degenerate end  of $(\HHH^3/\Gamma)_0$. Then there is a neighbourhood $E$ of $f_\Gamma^{-1}(e)$ of the form $\Sigma \times (t_1, t_2)$ where  $\Sigma$ is an essential subsurface of $S$, and either $\Sigma \times \{t_1\}$ or $\Sigma \times \{t_2\}$ corresponds to $e$.

Alternatively, let 
 $e$ be wild. Then there is a neighbourhood $E$   homeomorphic to the complement of countably many pairwise disjoint neighbourhoods of (simply degenerate or wild) ends $\Sigma_k \times [s_k, s'_k]$ in $\Sigma \times (t_1, t_2)$, where $e$ corresponds to either $\Sigma \times \{t_1\}$ or $\Sigma \times \{t_2\}$. Here $\Sigma_k$ is an essential subsurface of $\Sigma$ and $t_1 < s_k < s'_k < t_2$. If $e$ corresponds to $\Sigma \times \{t_1\}$, then both $(s_k)$ and $(s_k')$ accumulates to $t_1$ from below; similarly if $\Sigma \times \{t_2\}$ corresponds to $e$, they accumulate from above. 
\begin{definition}
\label{def coupled}
	Suppose that an end $e$ of $(\HHH^3/\Gamma)_0$ corresponds (under $f_\Gamma^{-1}$)  to $\Sigma \times \{t\}$ in $\mathbf M_\Gamma^0$ for some $t \in (0,1)$ and an essential subsurface $\Sigma$ of $S$.
	\begin{enumerate}
		\item We say that an end of $(\hyperbolic^3/\Gamma)_0$ is {\bf algebraic} if it has a neighbourhood which is a homeomorphic image of a neighbourhood of an end of $(\hyperbolic^3/G_\infty)_0$ under the covering projection $q\colon \hyperbolic^3/G_\infty \to \HHH^3/\Gamma$ associated with the inclusion of $G_\infty$ into $\Gamma$.
		\item	
		We call the end $e$ {\bf upward} if $f_\Gamma^{-1}(e)$ corresponds to $\Sigma \times \{t\}$ and $\Sigma \times (t-\epsilon, t)$ intersects $\mathbf M_\Gamma^0$ for every small $\epsilon > 0$, else it is called   {\bf downward}.
		\item
		When $e$ is simply degenerate and upward, we say that it is {\bf coupled} if there is a downward end $e'$ of $(\HHH^3/\Gamma)_0$ such that the following hold if we choose an embedding of $\mathbf M_\Gamma^0$ into $S \times (0,1)$ appropriately.		
		\begin{enumerate}
		\item $f_\Gamma^{-1}(e')$ corresponds to $\Sigma' \times \{t'\}$ with $t' >t$ and an essential subsurface $\Sigma'$ of $S$.
		\item There is a boundary component $A$ of $\mathbf M_\Gamma^0$ which abuts both on $f_\Gamma^{-1}(e)$ and $f_\Gamma^{-1}(e')$.
		(We defined the term 'abutting' for $(\HHH^3/\Gamma)_0$, but abuse the term also for the model manifold $\mathbf M_\Gamma^0$.)
		\item There is an essential subsurface $\Sigma'_+$ of $\Sigma' \times \{t'+\epsilon'\}$ which intersects $A$ at its boundary,  such that   the surface $\mathbf h_n^{-1}(\Sigma'_+)$  is \lq parallel into'  $\mathbf h_n^{-1}(\Sigma \times \{t-\epsilon\})$  in $\mathbf M_n$ for sufficiently large $n$, \ie  by moving $\mathbf{h}_n^{-1}(\Sigma_+')$ vertically, it can be isotoped into $\mathbf{h}_n^{-1}(\Sigma \times \{t-\epsilon\})$. 
		\end{enumerate}
	Similarly when $e$ is downward, we call it coupled if there is an upward ends $e'$  satisfying analogous conditions to the upward case.
		\item $e'$ above is called a {\bf partner} of $e$.
		\item We say that a simply degenerate end of the (non-cuspidal) algebraic limit $(\HHH^3/G_\infty)_0$ is coupled when the corresponding algebraic end of $(\HHH^3/\Gamma)_0$ is coupled.
	\end{enumerate}
\end{definition}
We note that we can change the embedding of $\mathbf M_\Gamma^0$ into $S \times (0,1)$ preserving the algebraic locus to another one adapted to the product structure without changing the combinatorial structure of the brick decomposition so that 
in condition (3) above, the surface $\Sigma'_+$ may be taken to be a subsurface of $\Sigma$.
We also note that a coupled end may have more than one partner.

Now, suppose that $e$ is algebraic, and let $\tilde e$ be a coupled end of $(\HHH^3/G_\infty)_0$ which is projected down to $e$ by $q$.
Let $A$ be an open annulus boundary component of $(\HHH^3/\Gamma)_0$ such that one of its ends abuts on $e$ whereas the other end abuts on its partner $e'$.
The $\integers$-cusp corresponding to such an open annulus, as also the annulus itself, are called {\bf conjoining}.
A conjoining $\integers$-cusp  lifts, in $(\HHH^3/G_\infty)_0$,
 to a $\integers$-cusp $U$ abutting on $\tilde e$. The cusp $U$ corresponds to an annular neighbourhood $A$ of a parabolic curve on $S$, and $A$ has two sides on the surface $S$.
Let $e_0$ be an end of $(\HHH^3/G_\infty)_0$ which lies on the other side of $U$. (A priori $e_0$ may coincide with $\tilde e$ itself if $A$ is non-separating).
If $e_0$ is  geometrically infinite (i.e.\ simply degenerate), then, by the covering theorem \cite{thurstonnotes, canary-cover, OhQ}, this end also has a neighbourhood  embedded homeomorphically in $(\HHH^3/\Gamma)_0$ under the covering projection $q$.
In particular the projection $q(\Fr U)$ of the open annulus $\Fr U$ must abut on an algebraic end.
This contradicts the assumption that  $q(\Fr U)$ abuts on $e'$, which cannot be algebraic. 
Hence, $e_0$ must be geometrically finite; in particular, $e_0$ is distinct from $\tilde e$.
We call a $\integers$-cusp of $(\HHH^3/G_\infty)_0$ separating  a geometrically finite end from a geometrically infinite end {\bf finite-separating}.
What we have just shown can be stated as follows.
\begin{lemma}
Any conjoining cusp in $(\HHH^3/\Gamma)_0$ lifts to a finite-separating cusp in $(\HHH^3/G_\infty)_0$.
\end{lemma}

\noindent {\bf Twisted and Untwisted cusps.}
The $\integers$-cusps of $\HHH^3/\Gamma$ not corresponding to cusps of $S$ are classified into two types: twisted and untwisted. We shall describe these now.
Let $P$ be a $\integers$-cusp neighbourhood of $\HHH^3/\Gamma$ corresponding to a maximal parabolic group generated by  $\gamma \in \Gamma$, and  not coming from a cusp of $S$.

Then, there is a sequence of loxodromic elements $(\gamma_n \in G_n)$ converging to $\gamma$ by the definition of geometric limits.
Let $U_n$ be a Margulis tube of $\HHH^3/G_n$ whose core curve is represented by $\gamma_n$.
We can choose  $U_n$ so that they converge to $P$ geometrically as  $\HHH^3/G_n$ converges geometrically to $\HHH^3/\Gamma$.
Take an  annular core $A$ of $\partial P$, and pull it back
to an annulus $A_n$ on $\partial U_n$ by $h_n^{-1}$. 
Now, consider a meridian
$m_n$  on $\partial U_n$ (\ie $m_n$ is an essential simple closed curve on $\partial U_n$ bounding a disc in $U_n$)   and a longitude $l_n$, \ie a core curve of $A_n$  generating  $\pi_1(\partial U_n)$.
Let $s_n$ be a simple closed curve  whose length  with respect to  the induced metric on $\partial U_n$ is shortest among the simple closed curves intersecting $l_n$ at one point.
Then in $\pi_1(U_n)$, we can express $[s_n]$ as $[m_n]+ \alpha_n [l_n]$ for some $\alpha_n \in \integers$.
If $\alpha_n$ is bounded as $n \rightarrow \infty$, we say that the cusp $P$ is  {\bf untwisted};  else it is said to be {\bf twisted}.

A description of twisted and untwisted cusps may also be given  using the hierarchy machinery of  Masur-Minsky \cite{masur-minsky2}.
Let $H_n$ be a hierarchy of tight geodesics in the curve complex of $S$ corresponding to the quasi-Fuchsian group $G_n$ having $\rho_n$ as a marking.
Then the cusp $P$ is twisted  if and only if $H_n$ contains a geodesic $g_n$ supported on an annulus whose core curve is freely homotopic to $\gamma_n$ and whose length goes to $\infty$ as $n \rightarrow \infty$.\\

%

\noindent {\bf Crown domains and crown-tips.}
Let $e$ be a simply degenerate end of $(\HHH^3/G_\infty)_0$ with ending lamination $\lambda$.
Let $\Sigma$ be the minimal supporting surface of $\lambda$,  i.e.\ an essential subsurface of $S$ containing $\lambda$ and minimal with respect to  inclusion (up to isotopy).
Let $\sigma$ be a component of $\Fr \Sigma$.
Fixing a hyperbolic metric on $S$, we can assume that both $\sigma$ and $\lambda$ are geodesic on $S$.
We consider  their pre-images $\tilde \sigma$ and $\tilde \lambda$ in $\HHH^2$.
A crown domain $C$ for $(\lambda, \sigma)$ is an ideal polygon in $\HHH^2 \setminus (\tilde \sigma \cup \tilde \lambda)$ with countably many vertices bounded by a component $\sigma_0$ of $\tilde \sigma$ and countably many leaves of $\tilde \lambda$.
A vertex of $C$ which is not an endpoint of $\sigma_0$ is called  {\bf a tip} of the crown domain $C$ or simply a {\bf crown-tip} (for $(\lambda, \sigma)$).\\

\noindent {\bf Well-approximated crown domains in ending laminations.}
To state a sufficient condition for pointwise convergence, we need to introduce a subtle condition concerning  geometric convergence to coupled geometrically infinite ends as follows.
Let $e$ be a coupled simply degenerate end of $(\HHH^3/G_\infty)_0$.
We  assume that $e$ is an upward end.
As mentioned earlier, $e$ has a neighbourhood  projecting homeomorphically to a neighbourhood of a simply degenerate end $\bar e$ of $(\HHH^3/\Gamma)_0$.
We can define the same property when $e$ is a lower end by turning everything upside down.

Recall that we have an embedding $f_\Gamma^{-1} \colon (\HHH^3/\Gamma)_0 \rightarrow S \times (0,1)$ adapted to the product structure as described above, and the end $f_\Gamma^{-1}(\bar e)$ corresponds to a level surface $\Sigma \times \{t\}$ for some essential  subsurface $\Sigma$ of $S$.
%
Since $e$ is assumed to be  coupled,  there exists $t' > t$ such that $\Sigma' \times \{t'\}$ corresponds to a downward end $e'$ of $(\hyperbolic^3/\Gamma)_0$ which is either simply degenerate or wild.
There is  at least one conjoining annulus $A$ abutting on  $\bar e$ as well as $e'$.

Now, as in the definition of a coupled end, pick a surface $\Sigma_-=\Sigma \times \{t-\epsilon\}$ in $\mathbf M_\Gamma^0$ for some small $\epsilon>0$. 
Take a surface $\Sigma'_+$ lying on $\Sigma' \times \{t'+\epsilon'\}$ as in the condition (3)-(c) in Definition \ref{def coupled}, which is assumed to intersect $A$ at its boundary.
Let us denote by $k_n$ an embedding of   $\mathbf h_n^{-1}(\Sigma'_+)$ into $\mathbf h_n^{-1}(\Sigma_-)$ realising the parallelism given in the condition (3)-(c).
Then,  $\mathbf h_n \circ k_n \circ \mathbf h_n^{-1}|\Sigma'_+$ gives an embedding of $\Sigma'_+$ into $\Sigma_-$. Denote this embedding by $\Psi_n$.  
Fix a complete hyperbolic structure on $\Sigma_-$ making each component of $\Fr \Sigma_-$ a cusp, and 
isotope $k_n$ so that each frontier component of $\Psi_n(\Sigma'_+)$ that is not contained in $\Fr 
\Sigma_-$ is a closed geodesic in $\Sigma_-$.

Let $\lambda$ be the ending lamination of $e$.
Then $\Sigma_-$ is regarded as the minimal supporting surface of $\lambda$.
Let $\sigma$ be a frontier component of $\Sigma_-$ in $S$.
Then $\rho_\infty(\sigma)$ represents a parabolic curve.
Let $P$ be a cusp in $\HHH^3/\Gamma$ corresponding to $\rho_\infty(\sigma)$, where $G_\infty$ is regarded as a subgroup of $\Gamma$.
Suppose that the annulus $A$ which abuts on both $e$ and $e'$ as above is the boundary of $P$.
Realise $\sigma$ and $\lambda$ as geodesics in $S$  (with respect to a fixed hyperbolic metric) and consider their lifts $\tilde \sigma$ and $\tilde \lambda$ to $\HHH^2$.
Let $C$ be a crown domain of $(\lambda, \sigma)$.
Its projection $p(C)$ into $S$ is an  annulus having finitely many frontier components one of which is the closed geodesic $\sigma$ and the others are bi-infinite geodesics.
Let $\lambda_C$ denote the union of the boundary leaves of $p(C)$ other than $\sigma$. 
We isotope $k_n$ fixing the boundary of $\Sigma'_+$ so that $\Psi^{-1}_n(\lambda_C)$ is geodesic.

We say that the crown domain $C$ is {\bf well approximated} for $(\rho_n)$ if  the closure of the union of the geodesics homotopic to $(\Psi_n^{-1}(\lambda_C))$ converges in the Hausdorff topology to a geodesic lamination which can be realised on a pleated subsurface in a neighbourhood of a partner $e'$ of $e$ conjoined by
 the boundary $A$ of $P$ as above. 
The pleated subsurface in question is thought of as a map from 
$\Sigma'$ homotopic to the restriction of the model map $f_\Gamma$ to $\Sigma' \times \{t'\}$.

If $e'$ is simply degenerate, this  condition is equivalent to saying that $(\Psi_n^{-1}(\lambda_C))$ does not converge to leaves of the ending lamination of $e'$.
In particular, in the simply degenerate case, the choice of $\Sigma'_+$ or $\epsilon'$ is irrelevant   since  all such surfaces are part of $\Sigma' \times \{t'+\epsilon'\}$ which  are parallel to each other in a neighbourhood of $e'$. The choice  {\em is} however relevant if $e'$ is wild. 
In this case, we say that $C$ is well approximated if we can choose $\Sigma'_+$ or $\epsilon'$ so that the above condition  holds.
\begin{rmk}
\label{leaf of EL}
In the above definition, we assumed that the geodesic lamination which is a limit of $(\Psi_n^{-1}(\lambda_C))$ is realisable.
In practice, it suffices to assume that  for at least one leaf $\ell$ of $\lambda_C$, the limit geodesic  of $(\Psi_n^{-1}(\ell))$ is realisable.
Indeed, if a leaf $\ell_0$ of the limit lamination of $(\Psi_n^{-1}(\lambda_C))$ is not realisable, it is a leaf of an ending lamination of some simply degenerate end of $(\HHH^3/\Gamma)_0$.
If $\ell_1$ is the limit geodesic of $(\Psi_n^{-1}(\ell'))$ where $\ell'$ is a geodesic in $\lambda_C$ adjacent to $\ell_0$, then $\ell_1$ is asymptotic to $\ell_0$. Hence it is also a leaf of the ending lamination.
Inductively, no geodesics in the limit of $(\Psi_n^{-1}(\lambda_C))$ are realisable.
\end{rmk}

\begin{rmk}
\label{Brock's case}
In the study of Brock's examples \cite{brock-itn} carried out in \cite{mahan-series2}, $e$ has only one partner $e'$, which is simply degenerate, and $\Sigma'$ is homeomorphic to $\Sigma$.
Further, the natural
 embedding of $\mathbf M_\Gamma$ into $S \times (0,1)$
 ensures that the ending lamination of $e'$ is not homotopic to the ending lamination $\lambda$ of $e$ in $S \times (0,1)$. 
We can further take  $\Psi_n$ to be a pseudo-Anosov map on $\Sigma$ which fixes $\lambda$ if we identify $\Sigma$ and $\Sigma'$ by a parallelism in $S \times (0,1)$.
For a crown domain $C$ of $(\lambda, \sigma)$, any leaf of $\lambda_C$ is a leaf of $\lambda$ and is therefore   dense in the latter.
Since $\lambda$ can be realised by a pleated surface homotopic to the inclusion map into a neighbourhood of $e'$, the crown domain $C$ is well approximated.
Thus Brock's examples satisfy the well-approximation condition.
\end{rmk}

\begin{rmk}
\label{partial realisation}
In the definition of well-approximation, we considered the union of boundary leaves of a crown domain $\lambda_C$ rather than the entire ending lamination $\lambda$.
Therefore,  the homeomorphism $\Psi_n^{-1}$ need not be defined on the entire $\Sigma$, and $\Sigma'$ can be a proper subsurface of $\Sigma$.
\end{rmk}


%
%

\subsection{Statements and scheme} With the background and terminology above, we can restate Theorems \ref{introthm1}
and \ref{introthm2} more precisely.

\begin{theorem}
\label{pointwise convergence}
Let $S=\HHH^2/G$ be a hyperbolic surface of finite area, and let $(\rho_n: \pi_1(S) \rightarrow \pslc)$ be a sequence of quasi-Fuchsian groups (obtained as quasi-conformal deformations of $G$) converging algebraically to $\rho_\infty: \pi_1(S) \rightarrow \pslc$.
We set $G_n=\rho_n(\pi_1(S))$ and $G_\infty=\rho_\infty(\pi_1(S))$.
Suppose that $(G_n)$ converges geometrically to a Kleinian group $\Gamma$.
Then the Cannon-Thurston maps $c_n : S^1 (=\Lambda_G) \rightarrow \Lambda_{\rho_n(\pi_1(S))}$ for  $\rho_n$ do not converge pointwise to 
the Cannon-Thurston map  $c_\infty : S^1 \rightarrow \Lambda_{\rho_\infty(\pi_1(S))}$ for $\rho_\infty$ if and only if all of the following conditions hold:
\begin{enumerate}
\item there is a coupled
simply degenerate end $e$ of $(\HHH^3/G_\infty)_0$ with ending lamination $\lambda$,
\item there is an untwisted conjoining cusp $U$ abutting on the projection of $e$ to $(\HHH^3/\Gamma)_0$, such that $U$ corresponds to a parabolic curve $\sigma$,
\item there is a  well-approximated crown domain for $(\lambda, \sigma)$.
\end{enumerate}
\end{theorem}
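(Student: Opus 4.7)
I plan to establish the biconditional by proving the two directions separately, both leaning on the EPP criterion of Proposition \ref{ptwisecrit} and the model-manifold description of Theorem \ref{thm_osa}. The sufficiency direction produces an explicit point of non-convergence; the necessity direction shows EPP at every boundary point when any one of Conditions (1)--(3) fails.

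For sufficiency, assume (1)--(3) hold and fix a well-approximated crown domain $C$ for $(\lambda,\sigma)$. Let $\zeta$ be a crown-tip, so that $\zeta$ is a common ideal endpoint of a lift $\tilde\sigma_0$ of $\sigma$ and of a boundary leaf $\ell \subset \tilde\lambda$ of $C$; let $\zeta^\ast$ be the other endpoint of $\tilde\sigma_0$. Since $\rho_\infty(\sigma)$ is parabolic, Theorem \ref{ptpre-ct} identifies $\zeta$ with $\zeta^\ast$ under the equivalence relation $\RR$, giving $c_\infty(\zeta) = c_\infty(\zeta^\ast) =: p$, the parabolic fixed point of $\rho_\infty(\sigma)$. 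I would then argue that $c_n(\zeta) \not\to p$: the ray $[o_{\HHH^2},\zeta)$ maps via $\Phi_n$ to a ray in $\HHH^3$ whose projection to $\HHH^3/G_n$, transported by the approximate isometry $h_n$ into the geometric-limit model $\mathbf M_\Gamma^0$, enters a neighbourhood of the coupled end $\bar e$, crosses the Margulis tube $U_n$ of $\sigma$ (of bounded combinatorial size because the cusp is untwisted), and continues into $\bar E'$. The well-approximation hypothesis furnishes a pleated surface in $\bar E'$ realising the boundary leaves $\lambda_C$, so the image ray is pinned near that pleated surface, forcing $c_n(\zeta)$ (and $c_n(\zeta^\ast)$) to accumulate on a limit point of $\Gamma$ associated with $\bar E'$, distinct from $p$.

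For necessity I would argue the contrapositive in three cases. Given $\xi \in S^1$ and a subsegment $[a,b] \subset [o_{\HHH^2},\xi)$ outside the $N$-ball, the aim is to lower-bound $d_{\HHH^3}(o_{\HHH^3},[\Phi_n(a),\Phi_n(b)])$ by a proper function of $N$, uniformly in $n$; by Proposition \ref{ptwisecrit} this implies convergence at $\xi$. Replacing $[\Phi_n(a),\Phi_n(b)]$ by an electro-ambient representative (Lemma \ref{ea-strong}) in the model $\mathbf M_n$ and transporting via $\mathbf h_n$ into $\mathbf M_\Gamma^0 \hookrightarrow S\times(0,1)$, I would analyse where this path can go. If Condition (1) fails, no algebraic simply degenerate end is coupled, and the algebraic locus separates any such electro-ambient path from the basepoint, giving the desired lower bound. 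If Condition (2) fails (the conjoining cusp is twisted), the Masur--Minsky hierarchy contains an annular geodesic of length tending to infinity, so any electro-ambient quasi-geodesic crossing $U_n$ must traverse a proportionally long arc, driving it far from the basepoint. If Condition (3) fails, by Remark \ref{leaf of EL} the boundary leaves $\lambda_C$ limit to leaves of the ending lamination of $e'$; the pull-back geodesics then remain on the $e$-side of the cusp, reducing to the simply-degenerate-without-partner case where EPP is standard.

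The hardest piece, I expect, is the sufficiency argument when the partner end $e'$ is wild rather than simply degenerate. In that case no single ending lamination characterises $e'$, and one must work directly with the surface $\Sigma'_+$ at level $t'+\epsilon'$ from Definition \ref{def coupled} together with the embedding $\Psi_n$. Remark \ref{partial realisation} warns that $\Sigma'$ may be a proper subsurface of $\Sigma$, so one must carefully select the embedding of $\mathbf M_\Gamma^0$ into $S\times(0,1)$ so that the pleated-surface construction is consistent across $n$. Combining this bookkeeping with the electro-ambient control from Lemma \ref{ea-strong} and the convergence of model maps furnished by Theorem \ref{thm_osa} is the main technical burden of the proof.
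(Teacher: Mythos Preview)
Your sufficiency argument follows the paper's approach but contains a definitional error: a crown-tip is by definition a vertex of $C$ that is \emph{not} an endpoint of $\tilde\sigma_0$, so $\zeta$ is a common endpoint of two adjacent leaves of $\tilde\lambda$, not of $\tilde\sigma_0$ and a leaf. The conclusion $c_\infty(\zeta)=p$ still follows from Theorem~\ref{ptpre-ct} because all ideal vertices of the complementary polygon $C$ are identified under $\RR$, but your tracing of the ray through the model is built on the wrong picture.

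The more serious gap is structural, in the necessity direction. You decompose by which of Conditions~(1)--(3) fails globally and attempt EPP at every $\xi$ under each hypothesis. The paper instead decomposes by \emph{point type}: it first proves (Section~\ref{noncrown}) that $c_n(\xi)\to c_\infty(\xi)$ for every $\xi$ that is not a crown-tip, with no hypothesis on Conditions~(1)--(3) at all; only then (Section~\ref{crowntips}) does it treat each crown-tip $\zeta$ of a crown $C$ for $(\lambda,\sigma)$, showing convergence whenever that particular triple $(e,\sigma,C)$ violates one of the conditions. This matters because even when (1)--(3) hold for some triple, one still needs convergence at all non-crown-tips and at tips of other crowns; your three-case split does not cover this. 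Moreover the non-crown-tip case is where the real work lies: the paper builds an iterated electrocution hierarchy on $\tilde{\mathbf M}_\Gamma$ (minimal, second-minimal, \ldots\ degenerate bricks, Lemma~\ref{quasi-convex}) and runs a case analysis according to whether $p\circ r_\xi$ has infinite electric length (Case~I) or is eventually trapped in one subsurface (Case~II, with subcases IIA and IIB(i)--(iv)). Your assertion that ``the algebraic locus separates any such electro-ambient path from the basepoint'' cannot substitute for this --- the algebraic locus is only immersed in $\mathbf M_\Gamma^0$ and does not separate. You have also misidentified the hardest step: the wild-partner issue in sufficiency is comparatively minor, while Section~\ref{noncrown} carries most of the technical load.
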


\begin{theorem}
\label{non-continuous}
In the setting of Theorem \ref{pointwise convergence}
above, suppose that $(\HHH^3/G_\infty)_0$ has a coupled simply degenerate end with an untwisted conjoining cusp abutting on it such that the corresponding crown domain is well approximated.
Let $\lambda_+$ be the union of the upper parabolic curves and the upper ending laminations, and $\lambda_-$ the union of the lower parabolic curves and the lower ending laminations for $\HHH^3/G_\infty$.
Then for $\zeta \in \Lambda_G=S^1$, the sequence $(c_n(\zeta))$ does not converge to $c_\infty(\zeta)$ if and only if $\zeta$ is a tip of a crown domain $C$ for $(\mu,  \sigma)$ where 
\begin{enumerate}
\item
$\mu \cup  \sigma$ is contained  in either $\lambda_-$ or $\lambda_+$;
\item
$\sigma$  corresponds to an untwisted conjoining cusp abutting on the end for which $\mu$ is the ending lamination; and
\item 
$C$ is  well approximated.
\end{enumerate}
\end{theorem}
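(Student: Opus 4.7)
The proof proceeds via the EPP criterion of Proposition~\ref{ptwisecrit}. For each $\zeta \in S^1$, pointwise failure $c_n(\zeta) \not\to c_\infty(\zeta)$ is characterised by failure of EPP along the ray $[o_{\HHH^2}, \zeta)$ under the lifts $\Phi_n$. Thus the theorem reduces to identifying the $\zeta$ at which EPP fails, and showing that these are exactly the crown-tips satisfying conditions (1)--(3).

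\textbf{Sufficiency (crown-tips give discontinuity).} Suppose $\zeta$ is a tip of a well-approximated crown domain $C$ for $(\mu,\sigma)$ satisfying (1)--(3). First, I would apply Theorem~\ref{ptpre-ct}: the boundary leaves of $\tilde\mu$ meeting at $\zeta$ chain $\zeta$, via the transitive closure $\RR$, to the endpoints of any lift $\sigma_0$ of $\sigma$. Consequently $c_\infty(\zeta)$ coincides with the parabolic fixed point of $\rho_\infty(\sigma)$. Next, the well-approximation hypothesis supplies pleated realisations of $\Psi_n^{-1}(\lambda_C)$ on a surface homotopic into a neighbourhood of the partner end $e'$. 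Transporting these back to $\HHH^3/G_n$ via the bi-Lipschitz model maps of Theorem~\ref{thm_osa} and the approximate isometries $h_n$, the leaf of $\tilde\mu$ abutting $\zeta$ lifts to a geodesic in $\HHH^3$ which, for large $n$, exits the Margulis tube around $\rho_n(\sigma)$ on the $e'$-side. The untwistedness of the conjoining cusp bounds the Dehn twist coefficients of these tubes along the sequence, so the exit points do not spiral onto the parabolic fixed point; they accumulate at a point strictly separated from $c_\infty(\zeta)$. This generalises the computation of \cite{mahan-series2} for Brock's examples (see Remark~\ref{Brock's case}).

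\textbf{Necessity (all discontinuities are such tips).} For the converse, suppose $\zeta$ is \emph{not} the tip of a well-approximated crown domain of the described form; I would verify EPP at $\zeta$ by analysing the model manifolds $\mathbf M_n$ of Theorem~\ref{thm_osa}. If $\zeta$ is not $\RR$-related to any parabolic fixed point attached to a conjoining cusp, the image of the ray under $\Phi_n$ eventually enters a single algebraic end, and standard pleated-surface/electro-ambient arguments (Lemma~\ref{ea-strong}) push $\Phi_n([a,b])$ uniformly to infinity. If $\zeta$ is a tip of a crown domain that fails one of (1)--(3), then there is no parallel passage in $\mathbf M_n$ taking the leaves $\Psi_n^{-1}(\lambda_C)$ through the tube of $\sigma$ and out to a partner end: either the relevant end of the geometric limit is not coupled to $e$ on the correct side (failure of (1)), or the cusp is twisted, so that by the Masur--Minsky hierarchy description the annulus-supported geodesic in $H_n$ has unbounded length and forces the tube to ``swallow'' the exit point (failure of (2)), or the boundary leaves of $C$ converge to leaves of the ending lamination of a partner end rather than to realisable geodesics on a pleated surface (failure of (3)). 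In each sub-case an electro-ambient quasi-geodesic argument, electrocuting the cusps and tubes, shows that the hyperbolic geodesic $[\Phi_n(a),\Phi_n(b)]$ is forced to track the model geodesic through the deep part of $\mathbf M_n$, so EPP holds.

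\textbf{Main obstacle.} The principal difficulty is the necessity direction, and within it the sub-case analysis distinguishing non-well-approximated crown tips from well-approximated ones within the combinatorial structure of $\mathbf M_\Gamma$. This requires interleaving (a) Minsky's block decomposition with (b) the Masur--Minsky hierarchy characterisation of twisted/untwisted cusps and (c) the parallel-surface condition in Definition~\ref{def coupled}, in order to verify that a Margulis tube genuinely functions as a bypass between $e$ and a partner $e'$ only in the well-approximated untwisted situation. This is precisely the geometric content the technical hypotheses were designed to encode.
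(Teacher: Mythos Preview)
Your sufficiency direction is essentially the paper's argument in \S\ref{necessity}: identify $c_\infty(\zeta)$ with the parabolic fixed point $p_\sigma$ via Theorem~\ref{ptpre-ct}, use well-approximation to realise the leaf $\ell$ near the partner end, and use untwistedness to bound the bridging arc $a^n_\ell$ so that $c_n(\zeta)\to p_\mu\neq p_\sigma$. That part is fine.

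The necessity direction, however, has a real gap. Your dichotomy ``$\zeta$ is not $\RR$-related to a parabolic fixed point of a conjoining cusp'' versus ``$\zeta$ is a crown-tip failing one of (1)--(3)'' is not the paper's structure, and your claim that in the first case ``the image of the ray under $\Phi_n$ eventually enters a single algebraic end'' is simply false. The paper's organising split for non-crown-tips (\S\ref{noncrown}) is \emph{Case I (infinite electric length)} versus \emph{Case II (ray eventually in one subsurface)}. In Case~I the projected ray enters and leaves the subsurfaces $\Sigma_j,\Sigma'_j$ infinitely often, and EPP is obtained not by a one-step electrocution of cusps and tubes but by an \emph{iterated} electrocution of the model manifold: one builds the enlarged pieces $\mathbf E_j^{\mathrm{ext}},\mathbf F_j^{\mathrm{ext}}$ in order of minimality, proves each is uniformly quasiconvex after the previous electrocutions (Lemma~\ref{quasi-convex}), and then transfers the resulting control back to $\tilde{\mathbf M}_n$ via Lemmas~\ref{passing corresponding one} and~\ref{uniformly farther}. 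Your sketch does not suggest this construction, and invoking Lemma~\ref{ea-strong} alone is not enough: the point is precisely to manufacture a nested family of quasiconvex sets to which it can be applied. Case~II in turn splits into several subcases (IIA, IIB(i)--(iv)) handled by quite different mechanisms; the reduction in IIB(iv) to the single-ended situation of \cite{mahan-split,mahan-series2} only becomes available after the earlier subcases are peeled off.

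For crown-tips failing one of (1)--(3), the paper does \emph{not} verify EPP by electro-ambient tracking through $\mathbf M_n$ as you propose. Instead it argues directly that $c_n(\zeta)\to c_\infty(\zeta)$ by showing the geodesic ray $r_\zeta^n$ (or a uniform quasi-geodesic replacement) spends time tending to $\infty$ in the Margulis tube $\tilde P_n$, which converges to a horoball touching $S^2_\infty$ only at $c_\infty(\zeta)$. In the non-coupled/non-conjoining case (Proposition~\ref{non-coupled}) this follows because both vertical annuli of $\partial U^n$ have moduli $\to\infty$; in the not-well-approximated case (Proposition~\ref{wellapprox}) because the pleated surfaces $f_n$ realising $\lambda$ escape to infinity; in the twisted case (Proposition~\ref{twistedcusp}) because the bridging arc $a^n_\ell$ itself spends unbounded time in $\tilde P_n$. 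Your ``electrocute and track'' plan for these subcases would have to recover these geometric facts anyway, and you have not indicated how.
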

A word of clarification here. The algebraic limit may contain both upper and lower ending laminations corresponding to {\it different} subsurfaces. Each of these is a potential source of discontinuity {\it provided} they satisfy the second and third conditions. 
We now briefly describe the scheme  we shall follow to  prove the above two theorems:

\begin{enumerate}
	\item In Section \ref{necessity}, we shall show that  if there is a coupled  geometrically infinite end  $(\HHH^3/G_\infty)_0$  with an untwisted conjoining cusp abutting on it, and the corresponding crown domain is well approximated, then at the corresponding crown-tips, the sequence of Cannon-Thurston maps {\em do not} converge.
	\item In Section \ref{noncrown}, we shall show that at points other than crown-tips, the sequence of Cannon-Thurston maps {\em always} converge pointwise.
	\item Finally, in Section \ref{crowntips}, we shall prove the remaining assertion: if a crown $C$ 
	\begin{itemize}
		\item is either not well approximated
		\item  or does not come from the ending lamination of a simply degenerate end $e$ of   $(\HHH^3/G_\infty)_0$ and a parabolic curve corresponding to an untwisted conjoining cusp abutting on $e$,
	\end{itemize} 
 then at the tips of $C$ the sequence of Cannon-Thurston maps {\em do} converge pointwise.
\end{enumerate}

\section{Necessity of conditions}\label{necessity}
In this section, we shall prove  the \lq only if' part of Theorem \ref{pointwise convergence}, and  the \lq if' part of Theorem \ref{non-continuous}.
As in the  definition of well-approximated crown domains in Section \ref{ends}, we assume that $(\HHH^3/G_\infty)_0$ has  a coupled simply degenerate end $e$ with   an untwisted conjoining cusp $P$ abutting on its projection $\bar e$ in $(\HHH^3/\Gamma)_0$.
Let $\sigma$ be a parabolic curve representing $P$, and $\lambda$ the ending lamination of $e$, both of which we realise as geodesics in $S$.
We lift them to $\HHH^2$. Consider a crown domain $C$ and let $\zeta$ be a tip of $C$.
We assume that $C$ is well approximated.
We shall show that $(c_n(\zeta))$ does not converge to $c_\infty(\zeta)$. This will  prove  both the \lq only if' part of Theorem \ref{pointwise convergence}, and  the \lq if' part of Theorem \ref{non-continuous} at the same time.

The proof is similar to that of discontinuity for Brock's example   dealt with in \cite{mahan-series2}.
 Let $\Sigma$ denote the minimal supporting surface of $\lambda$, i.e.\ an essential subsurface of $S$ containing $\lambda$ and minimal up to isotopy with respect to  inclusion.
 Since $\sigma$ is finite-separating, there exists an essential subsurface $B$ of $S$ such that
 \begin{enumerate}
 	\item $\Sigma \cap B = \sigma$ 
 	\item $B$ corresponds to an upper  geometrically finite end in the algebraic limit $(\HHH^3/G_\infty)_0$
 \end{enumerate}
 By assumption, the crown domain $C$ is well approximated.
 Therefore, there are essential subsurfaces $\Sigma_-$ and $\Sigma_+$  contained in neighbourhoods of $e$ and its partner $e'$ respectively with the following properties.
(Here instead of the model manifold $\mathbf M_\Gamma$ used in \S \ref{ends}, we use $\HHH^3/\Gamma$ itself.)
The surface  $h_n^{-1}(\Sigma_+')$  is parallel into $h_n^{-1}(\Sigma_-)$ in $\HHH^3/G_n$. 
Let $\Psi_n$ denote  an embedding from $\Sigma_+'$ to $\Sigma_-$ induced by this parallelism in $\HHH^3/G_n$  via $h_n$ as in Section \ref{ends}.
The assumption of well-approximated crown domain says that for any geodesic side $\lambda_C$ ($\neq \sigma$) of $p(C)$, the geodesic $(\Psi_n^{-1}(\lambda_C))$ converges in the Hausdorff topology (with respect to a fixed hyperbolic metric) to a geodesic lamination $\mu$ which can be realised by a pleated surface homotopic to the inclusion of $\Sigma_+'$.
Therefore, the realisation of $\lambda_C$ by a pleated surface from $S$ to $\HHH^3/G_n$ inducing $\rho_n$ between  fundamental groups can be further pushed forward by $h_n$ to a quasi-geodesic realisation. The sequence of  quasi-geodesic realisations thus obtained converges to a family of geodesics realised in $\HHH^3/\Gamma$.  We denote the latter by $\mu^\ast$.

Now let $(\kappa_n)$ be a sequence of  bi-infinite geodesics in $\HHH^3/G_n$ asymptotic in one direction to the closed geodesic representing $\rho_n(\sigma)$ and in the other to (an end-point of a leaf of) the realisation
of $\lambda_C$.  
Lift $\kappa_n$ to a geodesic $\tilde \kappa_n$ in $\HHH^3$ asymptotic to a lift of a leaf of $\lambda_C$. Also assume that  $\tilde \kappa_n$ contains a basepoint $o_n^\mu$ lying within a bounded distance of $o_{\HHH^3}$.
Since  $(\Psi_n^{-1}(\lambda_C))$ converges to  $\mu^\ast$,   $(\tilde\kappa_n)$ converges  to a geodesic with  distinct endpoints. One of these, $p_\sigma$ (say), corresponds to $ \sigma$. The other,
 $p_\mu$ (say),  is the endpoint of a lift of a leaf of $\mu^*$ and lies in the limit set of $\Gamma$.

We note that the basepoints which we need to consider for convergence of  Cannon-Thurston maps should lie on the algebraic locus and its pre-image under $h_n$.
If we try to connect this basepoint to a lift of the realisation of $\lambda_C$ by an arc in the right homotopy class, it might land at a point whose distance from $o_{\HHH^3}$ goes to $\infty$.
This is the point where the assumption that the cusp $P$ corresponding to $ \sigma$ is {\em untwisted} is relevant.
We shall explain this more precisely now.

Recall that the surface $S$ has a subsurface $B$ corresponding to an upper geometrically finite end.
We can choose a basepoint $o_{\HHH^2}$ in a component $\tilde B$ of the pre-image of $B$ in $\HHH^2$ so that on the other side of a component of $\Fr \widetilde B$, there lies a crown domain $C$ with tip $\zeta$.
We can assume, by perturbing $\Phi_n$ equivariantly that $\Phi_n(o_{\HHH^2}) = o_{\HHH^3}$ for all $n=1, 2, \dots, \infty$.
Let $\ell$ be a side of $C$ having $\zeta$ as its endpoint at infinity. Thus, $\ell$ is a lift of a component of $\lambda_C$.
Now $\lambda$ can be realised by a pleated surface, thought of as a map  from $S$ to $\HHH^3/G_n$ inducing $\rho_n$ at the level of  fundamental groups. Hence there is a realisation $\tilde \ell_n$ of $\ell$ in $\HHH^3$, given by a geodesic connecting the two endpoints of $\Phi_n(\ell)$.

Recall that we assumed that  $P$ is conjoining and  untwisted.
Let $P_n$ be a Margulis tube in $\HHH^3/G_n$ converging to $P$ geometrically as $(G_n)$ converges geometrically to $\Gamma$.
Then in the model manifold $\mathbf M_n$, the vertical annulus  forming the part of $\partial P_n$ on the $B-$side  has bounded height as $n \rightarrow \infty$. This is because  its limit is a conjoining annulus.
Hence we can connect $o_{\HHH^3}$ with $\tilde \ell_n$ by a path $a^n_\ell$ \lq bridging over' a lift $\widetilde P_n$ of $P_n$ (i.e.\ the path $a^n_\ell$ travels up the bounded height lift $\widetilde P_n$ to move from $o_{\HHH^3}$ to $\tilde \ell_n$).

%
%
The embedding $\Psi_n: \Sigma_+' \to \Sigma_-$ lifts to an embedding $\til{\Psi_n}: \til{\Sigma_+} \to  \til{\Sigma_-}$ of  universal covers. Both $\til{\Sigma_+},  \til{\Sigma_-}$ may be regarded as embedded in $\HHH^2$. Also,   $\ell$ lies in $\til{\Sigma_-}$.
Since $C$ is well approximated (by assumption),  $\til \Psi_n^{-1}(\ell)$ is realised in $\HHH^3$ by a limit of the $\til \ell_n$'s.
This implies that the geodesic $\til \ell_n$ passes at a bounded distance  from $\til P_n$.
Furthermore, since $P$ is untwisted, $\til \ell_n$ cannot move too far from $o_{\HHH^3}$ along $\til P_n$.
Therefore we can choose $a^n_\ell$ to have bounded length as $n \rightarrow \infty$.
Thus we are in the situation of the previous paragraph whether or not $\ell$ is contained in $\til \Psi_n(\til \Sigma_+)$.
Since $a^n_\ell$ has bounded length, by defining its end-point to be the basepoint $o_\mu^n$ of the last paragraph, we see that $o_\mu^n$  is at a uniformly bounded distance from $o_{\HHH^3}$. Hence the image of $\zeta$ by the Cannon-Thurston map $c_n$ converges to an endpoint $p_\mu$ of a lift of $\mu^\ast$.
On the other hand, since $\zeta$ is a crown-tip,  $c_\infty(\zeta)$ coincides with 
 $p_\sigma$ by Theorem \ref{ptpre-ct}. Since
$p_\mu \neq p_\sigma$ as was shown above, we conclude  that $\lim_{n\to \infty} c_n(\zeta) \neq c_\infty (\zeta),$  establishing the \lq only if' part of Theorem \ref{pointwise convergence}, and  the \lq if' part of Theorem \ref{non-continuous}. $\Box$

\section{Pointwise convergence for points other than tips of crowns}\label{noncrown}
In this section, we shall prove that for any $\zeta \in S^1 (= \Lambda_G)$ that is not a crown-tip, 
$\{c_n (\zeta)\}$ converges to $c_\infty (\zeta)$, where the $c_n$ and $c_\infty$ denote  Cannon-Thurston maps as in Theorems \ref{pointwise convergence} and \ref{non-continuous}.
Our argument follows the broad scheme of   \cite[Section 5.5]{mahan-series2} but  is considerably more involved technically. In particular, we need to deal with several cases which did not arise in  \cite{mahan-series2}.

We consider the universal covering $p: \widetilde S\rightarrow S$, identifying $\widetilde S$ with $\HHH^2$ and the deck group with $\pi_1(S)$ as before.
Fix  basepoints $o_{\HHH^2} \in \HHH^2$ and $o_{\HHH^3} \in \HHH^3$ independent of $n$.
Let $\zeta$ be a point in $\Lambda_{G}$ (= $\partial G$ or $\partial_h G$ according as $S$ is closed or finite volume non-compact) such that $\zeta$ is not a crown-tip.
Let $r_\zeta: [0,\infty) \rightarrow \HHH^2$ be the geodesic ray from $o_{\HHH^2}$  to $\zeta$.
The representation $\rho_n$ induces a map  $\Phi_n: \HHH^2 \rightarrow \HHH^3$ such that $\Phi_n(\gamma x)=\rho_n(\gamma) \Phi_n(x)$.
By Proposition \ref{ptwisecrit}, it suffices   to show that the EPP condition  holds for $r_\zeta$.

Let us briefly recall  the structure of ends of $(\HHH^3/G_\infty)_0$.
Take a relative compact core $C_\infty$ of $(\HHH^3/G_\infty)_0$.
Identify $C_\infty$ with $S \times [0,1]$ preserving the orientations.
Then $C_\infty \cap \Fr (\HHH^3/G_\infty)_0$ consists of annuli lying on $S \times \{0,1\}$ whose core curves are parabolic curves.
Let $\alpha_1, \dots , \alpha_p$ be parabolic curves  lying on $S \times \{1\}$. We call these upper parabolic curves. Let  $\alpha_{p+1}, \dots , \alpha_{p+q}$ be those lying on $S \times \{0\}$. We call these lower parabolic curves.
Identifying $S$ with $S \times \{0\}$ and $S \times \{1\}$, we may also regard these as  curves on $S$.
Recall that each component of $S \times\{1\} \setminus \bigcup_{j=1 \cdots p} \alpha_j$ faces  an upper end of $(\HHH^3/G_\infty)$ whereas each component of $S \times \{0\} \setminus  \bigcup_{j=p+1 \cdots p+q} \alpha_j$ faces a lower end.
Each of these ends is either geometrically finite or simply degenerate.
We let $\tilde e_1, \dots ,\tilde e_s$ be the upper simply degenerate ends and $\tilde f_1, \dots, \tilde f_t$ be the lower ones.

Take disjoint annular neighbourhoods $A_1, \dots , A_p$ of upper parabolic curves $\alpha_1, \dots , \alpha_p$ on $S \times \{1\}$ identified with $S$, and in the same way,  $A_{p+1}, \dots, A_{p+q}$ of lower parabolic curves $\alpha_{p+1}, \dots, \alpha_{p+q}$ on $S \times \{0\}$ identified with $S$.
We number the components of $S \setminus \cup_{j=1}^p A_j$  and $S \setminus \cup_{j=p+1}^{p+q} A_j$ so that components $\Sigma_1, \dots , \Sigma_s$ of $S \setminus \cup_{j=1}^p A_j$ correspond to simply degenerate ends $\tilde e_1, \dots , \tilde e_s$ respectively, and in $S \setminus \cup_{j=p+1}^{p+q} A_j$,   components $\Sigma_1', \dots , \Sigma'_t$ correspond to simply degenerate ends $\tilde f_1, \dots , \tilde f_t$ respectively. Then,
each  of $\Sigma_1, \dots , \Sigma_s; \Sigma'_1, \dots , \Sigma'_t$ supports the ending lamination of the corresponding simply degenerate end.
We denote the components of $S \setminus \cup_{j=1}^p A_j$ other than $\Sigma_1, \dots , \Sigma_s$ by $T_1, \dots , T_u$, and the components of $S \setminus \cup_{j=p+1}^{p+q} A_j$ other than $\Sigma_1' , \dots, \Sigma'_t$ by $T'_1, \dots ,T'_v$.
Each of $T_1, \dots , T_u; T_1', \dots, T'_v$ corresponds to a component of $\Omega_{G_\infty}/G_\infty$-- the surface at infinity of a geometrically finite end.

Let $q: \HHH^3/G_\infty \rightarrow \HHH^3/\Gamma$ be the covering map  induced by the inclusion of $G_\infty$ into $\Gamma$.
By Thurston's covering theorem \cite{thurstonnotes, canary-cover, OhQ}, each simply degenerate end of $(\HHH^3/G_\infty)_0$ has a neighbourhood that projects homeomorphically down to the geometric limit $(\HHH^3/\Gamma)_0$ under $q$.
We denote such neighbourhoods of $\tilde e_1, \dots, \tilde e_s$ by $\widetilde E_1, \dots , \widetilde E_s$, and those of $\tilde f_1, \dots , \tilde f_t$ by $\widetilde F_1, \dots, \widetilde F_t$.
Recall also that $\HHH^3/\Gamma$ has a  model manifold $\mathbf M_\Gamma$ which can be embedded in $S \times (0,1)$,  and we identify its non-cuspidal part $\mathbf M_\Gamma^0$ with $(\HHH^3/\Gamma)_0$ using the model map $f_\G$.
The corresponding ends of $(\HHH^3/\Gamma)_0$ in $S\times (0,1)$ and their neighbourhoods are denoted by the same symbols without tildes.
By moving the model manifold as in Section \ref{model geometric} preserving the combinatorial structure of  brick decomposition, 
we can assume that the model manifold $\mathbf M_\Gamma^0$, which is regarded as a subset in $S\times (0,1)$, and the model map $f_\Gamma$ have the following properties:
\begin{enumerate}[(i)]
\label{setting}
\item $\mathbf M_\Gamma^0$ is decomposed into \lq bricks' each of which is defined to be {\em the closure of a maximal family of parallel horizontal surfaces}.
We call such a decomposition into bricks the {\bf standard brick decomposition}.
In particular each brick has a form $\Sigma \times J$, where $\Sigma$ is an incompressible subsurface of $S$ and $J$ is a closed or  half-open interval. Further (Theorem \ref{thm_osa} (6)), the vertical boundary $\partial{\Sigma} \times J$ of a brick is contained in $\partial \mathbf M_\Gamma^0$. 
\item  There is a brick containing $S \times \{1/2\}$.
\item The image of the inclusion of $G_\infty$ into $\Gamma$ corresponds to the fundamental group carried by an incompressible immersion of $S$ into $\mathbf M_\Gamma^0$: the algebraic locus.
The surface consists of horizontal subsurfaces lying on $S \times \{1/2\}$ and annuli wrapping around torus boundary components.
\item We can assume that the brick containing $S\times \{1/2\}$ has the form $S\times [1/3,2/3]$.
Corresponding to simply degenerate ends $e_1, \dots, e_s$, there are bricks $\mathbf E_1=\Sigma_1 \times [2/3, 5/6), \dots , \mathbf E_s=\Sigma_s \times [2/3, 5/6)$ containing images of $e_1, \dots , e_s$ under $f_\Gamma^{-1}$.
\item We also have  bricks $T_1 \times [2/3, x_1], \dots , T_u \times [2/3, x_u]$  with $x_1, \dots , x_u \geq 5/6$.
\item Similarly, corresponding to simply degenerate ends $f_1, \dots , f_t$, there are bricks $\mathbf F_1=\Sigma'_1 \times (1/6, 1/3], \dots , \mathbf F_t=\Sigma'_t \times (1/6, 1/3]$, and $T_1' \times [x'_1, 1/3], \dots , T'_v \times [x'_v, 1/3]$ with $x'_1, \dots, x'_v \leq 1/6$.
\item Every end of $\mathbf M_\Gamma^0$ other than $f_\Gamma^{-1}(e_1), \dots , f_\Gamma^{-1}(e_s); f_\Gamma^{-1}(f_1), \dots , f_\Gamma^{-1}(f_t)$ lies at a horizontal level   in $(0, 1/6]$ or $(5/6, 1)$.
\item There are cusp neighbourhoods in $\mathbf M_\Gamma$ containing $A_1 \times [2/3, 5/6), \dots A_p \times [2/3, 5/6); A_{p+1} \times (1/6, 1/3], \dots , A_{p+q} \times (1/6, 1/3]$, which we denote respectively by $U_1, \dots, U_p; U_{p+1}, \dots , U_{p+q}$.
\end{enumerate}

The proof of the EPP condition which is required in order to establish that $c_n (\zeta) \to c_\infty (\zeta)$ splits into two cases. 
\begin{enumerate}[({\text Case} I:)]
	
	\item The geodesic ray $r_\zeta\colon [0,\infty) \to \HHH^2$ that connects the basepoint $o_{\HHH^2}$ to the point at infinity $\zeta$ is projected down to $S$ as a geodesic ray $\bar r_\zeta$ which enters and leaves each of the subsurfaces $\Sigma_i, \Sigma_i', T_i, T_i'$ infinitely often.
	\item The geodesic ray $\bar r_\zeta$ on $S$ eventually lies inside one of the subsurfaces $\Sigma_i, \Sigma_i', T_i, T_i'$.
\end{enumerate}

\subsection{Case I: Infinite Electric Length}
\label{case IEL}
We shall first consider Case I above and show that the EPP condition is satisfied.
We can assume that at least one of $s, t$ is positive since otherwise $G_\infty$ is geometrically finite and this case has already been dealt with in \cite{mahan-series1}.

Consider the pre-images $p^{-1}(\Sigma_1), \dots , p^{-1}(\Sigma_s)$ in $\hyperbolic^2$ of $\Sigma_1, \dots , \Sigma_s$.  Its union is denoted by $\widetilde \Sigma$.
In the same way, we denote by $\widetilde \Sigma'$ the union of the pre-images of $\Sigma'_1, \dots , \Sigma'_t$.
Equip $\HHH^2$ with an  electric metric $d_\Sigma$, by electrocuting  the components of $\widetilde \Sigma$.
Similarly, equip $\HHH^2$ with a different  electric metric $d_{\Sigma'}$, by electrocuting  the components of $\widetilde \Sigma'$.
The hypothesis of Case I is equivalent to the following.

\begin{assumption}[Infinite electric length]
\label{iel}
We assume (for the purposes of this subsection) that the lengths of $r_\zeta$ with respect to $d_\Sigma$ and $d_{\Sigma'}$ are both infinite.
In this case, we say that $\zeta$ satisfies the {\bf IEL} condition.
\end{assumption}

Under Assumption \ref{iel}, our argument is similar to that in \S 5.3.3 of \cite{mahan-series2}.
Due to the IEL condition,  $r_\zeta$ has infinite length for both $d_\Sigma$ and $d_{\Sigma'}$. Hence the ray
$r_\zeta$ goes in and out of components of $\widetilde \Sigma$ (as also those of $\widetilde \Sigma'$) infinitely many times.
Since there is a positive constant bounding  from below the distance between any two disjoint components of $\widetilde \Sigma$ or $\widetilde \Sigma'$, we have the following.
(See \S 5.5.4 in  \cite{mahan-series2}.)

\begin{lemma}
\label{electric EP}
Let $d'$ denote either $d_\Sigma$ or $d_{\Sigma'}$.
Then there exists a function $f_\zeta: \naturals \rightarrow \naturals$ with $f_\zeta(n) \rightarrow \infty$ as $n \rightarrow \infty$ such that if $t \geq N$, then $d'(r_\zeta(0), r_\zeta(t)) \geq f_\zeta(N)$.
\end{lemma}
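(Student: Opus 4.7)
My plan is to translate the IEL hypothesis (infinite electric length of $r_\zeta$ with respect to both $d_\Sigma$ and $d_{\Sigma'}$) into a lower bound on the electric distance from the basepoint, by showing that $r_\zeta$ is a uniform electric quasi-geodesic in each of these electric metrics. Throughout, I work with $d_\Sigma$; the argument for $d_{\Sigma'}$ is identical.

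First I would record that each component of $\tilde\Sigma$ is (totally geodesic, hence) $0$-quasiconvex in $\HHH^2$, since it is a lift of an essential subsurface of $S$ with geodesic frontier. By Bowditch's theorem (as recalled in \S\ref{rel hyp}), $(\HHH^2, d_\Sigma)$ is Gromov hyperbolic, and Lemma \ref{ea-strong} applies to the family $\{p^{-1}(\Sigma_i)\}$. Define for any subarc of $r_\zeta$ the \emph{electric length} $L_e$ as the sum of (i) the hyperbolic length of its portion outside $\tilde\Sigma$ and (ii) the number of distinct components of $\tilde\Sigma$ it traverses; this is (up to a factor of $2$) the length of the natural modification of $r_\zeta$ that uses the $H\times\{1/2\}$ shortcuts in $\EE(\HHH^2,\{p^{-1}(\Sigma_i)\})$. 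The IEL hypothesis is then equivalent to saying $L_e(r_\zeta|_{[0,t]})\to\infty$ as $t\to\infty$.

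Next, I would establish the comparison
\begin{equation*}
L_e(r_\zeta|_{[0,t]}) \;\le\; K\,d_\Sigma(r_\zeta(0),r_\zeta(t)) + C
\end{equation*}
for uniform constants $K,C$. To prove it, let $\beta_t$ be an electric quasi-geodesic without backtracking from $r_\zeta(0)$ to $r_\zeta(t)$ realizing $d_\Sigma(r_\zeta(0),r_\zeta(t))$, and let $\beta_t^q$ be its electro-ambient representative. By construction $L_e(\beta_t^q)\le K_0\,d_\Sigma(r_\zeta(0),r_\zeta(t))+C_0$, since each maximal penetration of a component of $\tilde\Sigma$ contributes $1$ to the electric length of $\beta_t$ and gets replaced by a single geodesic arc (counted once) in $\beta_t^q$. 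By Lemma \ref{ea-strong}, $r_\zeta|_{[0,t]}$ lies in the hyperbolic $R$-neighbourhood of $\beta_t^q$. Using the uniform $D$-separation of the components of $\tilde\Sigma$ — obtained after absorbing small neighbourhoods of the finitely many parabolic curves $\alpha_j$ into the electrocution family — the convex arc $r_\zeta\cap H$ for any component $H$ entered by $r_\zeta$ forces $\beta_t^q$ to enter (or come uniformly close to) $H$ as well; this bounds the number of components of $\tilde\Sigma$ met by $r_\zeta|_{[0,t]}$ by a constant times the number met by $\beta_t^q$. The fellow-travelling also forces the hyperbolic length of $r_\zeta|_{[0,t]}\setminus\tilde\Sigma$ to be bounded (up to multiplicative and additive constants) by the corresponding length for $\beta_t^q$, which is $\le L_e(\beta_t^q)$. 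Combining these gives the displayed inequality.

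Finally, combining the IEL conclusion $L_e(r_\zeta|_{[0,t]})\to\infty$ with the inequality forces $d_\Sigma(r_\zeta(0),r_\zeta(t))\to\infty$ as $t\to\infty$, so the nondecreasing function
\begin{equation*}
f_\zeta(N) \;:=\; \inf_{t\ge N} d_\Sigma(r_\zeta(0),r_\zeta(t))
\end{equation*}
is proper and satisfies the stated conclusion. The main technical obstacle is the quasi-convex fellow-travelling step: one must rule out the pathology that $r_\zeta$ briefly dips into a component $H$ of $\tilde\Sigma$ very far from $\beta_t^q$. This is handled by the convexity of the components (so the arc $r_\zeta\cap H$ has its endpoints on $\partial H$, which are both within $R$ of $\beta_t^q$) together with quasi-convexity, which forces the geodesic arc of $\beta_t^q$ connecting the corresponding nearby points to enter a uniformly thickened $H$, and hence to be accounted for in $L_e(\beta_t^q)$.
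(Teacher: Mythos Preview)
There is a genuine gap at the step ``The fellow-travelling also forces the hyperbolic length of $r_\zeta|_{[0,t]}\setminus\tilde\Sigma$ to be bounded \ldots\ by the corresponding length for $\beta_t^q$.'' Hyperbolic fellow-travelling of $r_\zeta$ and $\beta_t^q$ gives no control over which side of $\partial\tilde\Sigma$ each path lies on. Concretely, let $\zeta$ be an endpoint of a lift $\tilde\alpha$ of some $\alpha_j$: then $r_\zeta$ eventually lies in the (non-electrocuted) lifted annulus $\tilde A_j$, running parallel to $\tilde\alpha$, while the electro-ambient representative $\beta_t^q$ of an electric geodesic shortcuts through the adjacent (electrocuted) component $H$ of $\tilde\Sigma$. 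Both track $\tilde\alpha$ and stay within $R$ of one another, yet $\len_{\HHH^2}(r_\zeta\setminus\tilde\Sigma)\to\infty$ while $\len_{\HHH^2}(\beta_t^q\setminus\tilde\Sigma)$ stays bounded. In this scenario your $L_e(r_\zeta|_{[0,t]})\to\infty$ but $d_\Sigma(r_\zeta(0),r_\zeta(t))$ remains bounded, so the displayed inequality $L_e\le K\,d_\Sigma+C$ is simply false.

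That scenario belongs to Case II, not Case I, so the lemma itself is not threatened; the point is that your reading of IEL as ``$L_e\to\infty$'' is strictly weaker than Case I and is not enough to carry your argument. The paper (via the reference to \cite{mahan-series2}) instead uses the Case I hypothesis to extract that $r_\zeta$ enters and leaves infinitely many distinct components of $\tilde\Sigma$, and then argues by \emph{separation} rather than electro-ambient fellow-travelling. Each passage of $r_\zeta$ from one component $H_i$ of $\tilde\Sigma$ to the next crosses at least one lift $\tilde A$ of an annulus $A_j$; this strip is not electrocuted, has definite width $w>0$, and (being a neighbourhood of a complete geodesic line) separates the electric space, since the cones are attached only over components of $\tilde\Sigma$, which lie entirely on one side of $\tilde A$. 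Hence every electric path from $r_\zeta(0)$ to $r_\zeta(t)$ must traverse each such separating strip and pay at least $w$ per strip, giving $d_\Sigma(r_\zeta(0),r_\zeta(t))\ge w\cdot(k(t)-1)$, where $k(t)$ is the number of components of $\tilde\Sigma$ met by $r_\zeta|_{[0,t]}$. Lemma~\ref{ea-strong} is not needed here at all.
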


To prove the EPP  condition in Case I, we need to prove the following.
\begin{proposition}
\label{EPP infinite}
Suppose that $r_\zeta$ has infinite length in both $d_\Sigma$ and $d_{\Sigma'}$ as in Assumption \ref{iel}.
Then, the  EPP condition holds for $\zeta$.
\end{proposition}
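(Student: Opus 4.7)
The plan is to transport the infinite electric length from $(\HHH^2, d_{\Sigma,\Sigma'})$ to a corresponding electric metric on $\HHH^3$ via $\Phi_n$, and then to use Lemma \ref{ea-strong} to compare $\HHH^3$--geodesics with electro-ambient representatives. The first step is to electrocute simultaneously the components of $\tilde\Sigma$ and $\tilde\Sigma'$ in $\HHH^2$; call this combined electric metric $d_{\Sigma,\Sigma'}$. Lemma \ref{electric EP} applied to each of $d_\Sigma$ and $d_{\Sigma'}$, combined with Assumption \ref{iel}, yields a proper function $f_\zeta$ with $d_{\Sigma,\Sigma'}(o_{\HHH^2}, r_\zeta(t)) \geq f_\zeta(N)$ for all $t \geq N$, and indeed for every subsegment $[a,b] \subset r_\zeta$ lying outside $B_N(o_{\HHH^2})$ the entire segment has electric distance at least $f_\zeta(N)$ from $o_{\HHH^2}$.

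The second step is to build the analogous electric structure in $\HHH^3$ with constants \emph{uniform in} $n$. Each component of $\tilde\Sigma$ projects to some $\Sigma_i$ supporting a simply degenerate end $\tilde e_i$ of $(\HHH^3/G_\infty)_0$, and under $\Phi_n$ its image sits inside a neighbourhood of a lift of $\tilde e_i$; similarly for $\tilde\Sigma'$. Using the Minsky model for each $G_n$ together with the first author's work on Cannon-Thurston maps for surface groups (\cite{mahan-split}, \cite{mahan-elct}), neighbourhoods of lifts of simply degenerate ends are $C$-quasi-convex subsets of $\HHH^3$. Crucially, the topological data determining the ends is fixed, and the geometric convergence $(\HHH^3/G_n, x_n) \to (\HHH^3/\Gamma, x_\infty)$ through the approximate isometries $h_n$ (with basepoints on the algebraic locus) forces the Minsky-model constants, and hence the quasi-convexity constant $C$, to be uniform in $n$. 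Let $\HH_n$ denote this family of uniformly $C$--quasi-convex subsets of $\HHH^3$ and let $\EE(\HHH^3, \HH_n)$ be the electric space obtained by electrocuting them.

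The third step is to observe that $\Phi_n$ is uniformly Lipschitz from $(\HHH^2, d_{\Sigma,\Sigma'})$ into $(\EE(\HHH^3,\HH_n), d_e)$: on the complement of electrocuted pieces $\Phi_n$ is already bounded Lipschitz (by the Margulis lemma applied to the hyperbolic structure), while the electrocuted components of $\tilde\Sigma \cup \tilde\Sigma'$ land inside single elements of $\HH_n$ so incur zero electric cost. Composing with the electric geodesic between $\Phi_n(a)$ and $\Phi_n(b)$ then shows that $d_e(o_{\HHH^3}, [\Phi_n(a), \Phi_n(b)]_e) \ge f_\zeta(N) - K$ for a uniform $K$. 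Now apply Lemma \ref{ea-strong}: the actual $\HHH^3$--geodesic from $\Phi_n(a)$ to $\Phi_n(b)$ lies in the $R$--neighbourhood of an electro-ambient representative of an electric geodesic with the same endpoints. Outside the electrocuted sets this forces hyperbolic distance from $o_{\HHH^3}$; inside an electrocuted set $K_i \in \HH_n$, quasi-convexity of $K_i$ together with the fact that $K_i$ itself is far from $o_{\HHH^3}$ (since the electric geodesic already has electric distance $\ge f_\zeta(N)$ from $o_{\HHH^3}$) gives that every point of that subarc is also far from $o_{\HHH^3}$. This proves the EPP condition with the proper function $g(N) = f_\zeta(N)/2 - K - R$, say.

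The main obstacle is Step 2, establishing that the quasi-convexity constant $C$ and the Lipschitz constant in Step 3 can be chosen independent of $n$. This requires combining the finite complexity of $S$ (so that only finitely many topological types of subsurface appear), Minsky's uniform bi-Lipschitz model, and the explicit fact that the Minsky models $\mathbf M_n$ converge geometrically to $\mathbf M_\Gamma$ with the algebraic locus converging to an incompressible immersion of $S$; each of these is available in the setting of Theorem \ref{thm_osa}. A secondary technicality, routine once the above is in place, is to verify that electro-ambient representatives may be chosen so that the bridging between consecutive quasi-convex sets in $\HH_n$ is comparable to bridging between the corresponding components of $\tilde\Sigma \cup \tilde\Sigma'$ in $\HHH^2$; this is where the assumption that we electrocute \emph{both} $\tilde\Sigma$ and $\tilde\Sigma'$ (rather than just one) is essential, since otherwise a geodesic in $\HHH^3$ could potentially shortcut through an un-electrocuted degenerate end.
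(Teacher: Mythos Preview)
Your broad strategy---electrocute the end neighbourhoods, transfer the infinite electric length from $\HHH^2$ to $\HHH^3$, then invoke Lemma~\ref{ea-strong}---is exactly the paper's, but Step~2 as written has a genuine gap. The claim that ``neighbourhoods of lifts of simply degenerate ends are $C$-quasi-convex subsets of $\HHH^3$'' is false in the generality you need, and this is precisely what forces the paper into its more elaborate construction.

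The problem is \emph{nesting}: if some $\Sigma_k'$ (supporting a lower degenerate end $\tilde f_k$) is isotopic into $\Sigma_j$ (supporting an upper degenerate end $\tilde e_j$), then in the covering $\HHH^3/\Gamma^{\Sigma_j}$ associated to $\pi_1(\Sigma_j)$ there is a lower simply degenerate end as well. Consequently a geodesic in $\HHH^3$ between two points of a lift of $\mathbf E_j$ can escape deep into a lift of $\mathbf F_k$, so $\tilde{\mathbf E}_j$ alone is \emph{not} quasi-convex in the original metric. Your remark at the end that ``electrocuting both $\tilde\Sigma$ and $\tilde\Sigma'$'' prevents shortcuts is beside the point: Lemma~\ref{ea-strong} requires $C$-quasi-convexity of each $H\in\HH$ in the metric $d_X$ \emph{before} electrocution, not after. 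The paper handles this by introducing the notion of a \emph{minimal} end (one with no such $\Sigma_k'$ inside), proving quasi-convexity for that one first (Lemma~\ref{quasi-convex}, using that the associated covering then has all lower ends geometrically finite), electrocuting it, and then proceeding inductively so that at each stage the next end has become minimal in the already-electrocuted metric.

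Two further points you do not address, both of which the paper builds into its extended sets $\mathbf E_j^{\mathrm{ext}}$: (i) non-separating cusps abutting on $\mathbf E_j$ must be electrocuted (or excised) before quasi-convexity holds, since geodesics can wander arbitrarily far into horoballs; (ii) when $e_j$ is \emph{coupled}, the brick $\mathbf E_j^n$ in $\mathbf M_n$ limits not just to $\mathbf E_j$ but to the union of $\mathbf E_j$ with a neighbourhood of its partner, so to get constants uniform in $n$ one must enlarge $\mathbf E_j^{\mathrm{ext}}$ to include the partner as well. Without (ii) your uniformity claim in Step~2 fails even when nesting does not occur.
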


The proof of Proposition \ref{EPP infinite} occupies the rest of this subsection.
Recall now that we have a model manifold $\mathbf M_\Gamma$ for the geometric limit $\HHH^3/\Gamma$ with a bi-Lipschitz  homeomorphism $f_\Gamma^{-1}: \HHH^3/\Gamma \to \mathbf M_\Gamma$, which is the inverse of the model map. Let $\tilde f_\Gamma^{-1}: \HHH^3 \to \widetilde{\mathbf M}_\Gamma$ denote its lift to
a map between the universal covers.
We denote the lift of the model metric on $\mathbf M_\Gamma$ to $\widetilde{\mathbf M}_\Gamma$ by $d_{\mathbf M}$.
Since $(\rho_n)$ converges algebraically to $\rho_\infty$, we have  a $\rho_\infty$-equivariant map $\Phi_\infty : \HHH^2 \rightarrow \HHH^3$.
The ray $\Phi_\infty \circ r_\zeta$  is projected (under the covering projection)  to a ray $r_\zeta^\Gamma$ in $\HHH^3/\Gamma$. 
Also, by composing  $\tilde f_\Gamma^{-1}$ with $\Phi_\infty \circ r_\zeta$, we get a  ray $r_\zeta^{\mathbf M}: [0,\infty) \rightarrow \widetilde{\mathbf M}_\Gamma$.\\

\noindent {\bf Idea of proof of Proposition \ref{EPP infinite}:}
To prove Proposition \ref{EPP infinite}, we shall consider the behaviour of a ray in  $\widetilde{\mathbf M}_\Gamma$ with respect to a new electric metric. The hypothesis of
Proposition \ref{EPP infinite} guarantees that the $d'$-diameter of the ray $[o_{\Hyp^2}, \zeta)$
is infinite. It thus enters and leaves lifts of some fixed subsurface $\Sigma$ (defined
as in the discussion preceding Assumption \ref{iel}) infinitely often. There is a brick (of the form $\Sigma \times J$)
containing  $\Sigma$  with boundaries on cusp neighborhoods
 of $\mathbf{M}_\Gamma$. If $\Sigma$ bounds a simply degenerate end, then the brick is simply its product neighbourhood.
 The boundary cusps corresponding to the boundary curves of $\Sigma$ 
 lifted to the universal cover $\widetilde{\mathbf{M}_\Gamma}$
  separate $\widetilde{\mathbf{M}_\Gamma}$. It would then
   suffice to show that
  a geodesic realisation of $[o_{\Hyp^2}, \zeta)$ enters and leaves such bricks infinitely often
  and does so in a such way that is compatible with the way $[o_{\Hyp^2}, \zeta)$
  enters and leaves lifts of  $\Sigma$ in ${\Hyp^2}$. More precisely, following the scheme
  of \cite{mahan-series2}, it suffices to show that the brick (of the form $\Sigma \times J$)
  has uniformly quasiconvex lifts to the universal cover.
  
  Unfortunately, this is not quite true. One obstruction is the problem of nesting: an end $\mathbf{E}$
  that $\Sigma$ faces need not be minimal and a proper 
  subsurface $\Sigma'$ of $\Sigma$ might face another end   $\mathbf{E}'$. This will simply force the
  brick $\mathbf{E}$ to be non-quasiconvex.
  To circumvent this difficulty, we first prove quasiconvexity for minimal ends (Lemma \ref{quasi-convex})
  and then proceed  to \lq second (to) minimal' ends and iterate  the argument for minimal ends in Lemma
  \ref{passing corresponding one} {\it after electrocuting the minimal ends}.
 An application of  Lemma \ref{ea-strong} at this stage guarantees the EPP condition needed to complete
 the proof of Proposition \ref{EPP infinite}.

Since the proof of Lemma  \ref{quasi-convex}, which is the starting point of the above scheme, is itself
quite involved, we give a brief idea of its proof here. We prove Lemma \ref{quasi-convex} by pulling back minimal ends in
$\mathbf{M}$  to the sequence $\mathbf{M}_n$ converging to it. This is a delicate operation as disjoint
ends of $\mathbf{M}$ might come together in the approximants $\mathbf{M}_n$. The first exercise here
is to thus identify the appropriate extension $\mathbf{E}^\mathrm{ext}$ of the minimal end
$\mathbf{E}$ of $\mathbf{M}$ for which such a phenomenon occurs. We shall
first describe the  construction of such an extended brick
below. However, such  bricks do not pull back to bricks in
the sequence $\mathbf{M}_n$. The problem here is caused by Margulis tubes that bound the pull back of
 $\mathbf{E}^\mathrm{ext}$ (and bricks are necessarily bounded by cusp neighbourhoods). To circumvent
 this difficulty, we apply the Brock-Bromberg drilling theorem \cite{BB} and establish
 a correspondence between bricks of the drilled manifold and those of $\mathbf{M}$. This finally
 allows us to reduce Lemma  \ref{quasi-convex} to proving quasi-convexity of bricks in the drilled manifold. This last step is reasonably standard once we realise that the bricks in the drilled
 manifold correspond to complements of cusps for a quasi-Fuchsian surface. We now proceed with the details of the above sketch.\\

\noindent {\bf Building the extended brick $\widehat{\mathbf E}^\mathrm{ext}$:}
Recall that we have bricks $\mathbf E_1, \dots ,\mathbf E_s; \mathbf F_1, \dots , \mathbf F_t$ in the non-cuspidal part $\mathbf M_\Gamma^0$ of the model manifold $\mathbf M_\Gamma$.
We say that a brick $\mathbf E$ in $\mathbf M_\Gamma^0$ corresponding to a neighbourhood $E$ of a simply degenerate end $e$ is {\bf minimal} if there is no other simply degenerate end whose neighbourhood is homotopic into $E$.

Let $\mathbf E$ be a brick  which  is not necessarily minimal, and suppose that $\mathbf E$ has the form $\Sigma \times J$ where $J$ is a half-interval.
Recall that a cusp neighbourhood $U$ in $\mathbf M_\Gamma$ is said to  abut on $E$ if the vertical boundary of $\mathbf E$ intersects $\partial U$.
A cusp neighbourhood $U$ is said to be {\bf associated with} $\mathbf E$ if the inverse image of  a horizontal curve on $\partial U$ under the approximate isometry  is homotopic into the inverse image of $\mathbf E$ in $\mathbf M_n$ for every large $n$.
We note that any cusp neighbourhood abutting on $\mathbf E$ is also associated with $\mathbf E$.

To make our description  easier, we now introduce the notion of $\sup$ and $\inf$ for cusp neighbourhoods and bricks.
Let $\pr_h \colon \mathbf M_\Gamma \to (0,1)$ be the horizontal projection to $(0,1)$ with $\mathbf M_\Gamma$ regarded as a subset of $S \times (0,1)$.
For a set $K$ in $\mathbf M_\Gamma$, we define $\sup K$ and $\inf K$ to be $\sup \pr_h(K)$ and $\inf \pr_h(K)$ respectively.

Suppose that $\mathbf E \cong \Sigma \times [2/3,5/6)$ is one of the  $\mathbf E_1, \dots  \mathbf E_s$ such that $\mathbf E$ is minimal.
Let $U_{l_1}, \dots , U_{l_\mu}$ be the cusp neighbourhoods abutting on $\mathbf E$.
We number all the cusp neighbourhoods associated with $\mathbf E$ (there might be infinitely many of them) as $U_{l_1}, \dots$, by extending $U_{l_1}, \dots , U_{l_\mu}$.
We let $\varsigma$ be $\min_{k=1}^\mu \sup(U_{l_k})$ respectively.
We note that $\varsigma \geq 5/6$ so that, in particular, $\mathbf E$ is contained in $\Sigma \times [2/3, \varsigma]$.

We renumber the $U_{l_k}$ with $k > \mu$ so that $U_{l_{\mu+1}}, \dots , U_{l_\nu}$ are all of those among the $U_{l_k}\ (k> \mu)$ that intersect $\Sigma \times \{\varsigma\}$.
(Note that every cusp neighbourhood intersecting $\Sigma \times \{2/3\}$ abuts on $\mathbf E$ since we assumed that $\mathbf E$ is minimal.
Also as was shown in  \S9.4.2 in \cite{OS}, we can assume, by choosing an appropriate embedding of $\mathbf M_\Gamma$ into $S \times (0,1)$, that every cusp neighbourhood associated with $\mathbf E$ whose $\sup$ is greater than $\varsigma$ must intersect $\Sigma \times \{\varsigma\}$.)
If $U_{l_k}$ is a $\integers$-cusp neighbourhood, we can extend it to a solid torus in $S \times (0,1)$ by adding the products of horizontal annuli and  closed or half-open vertical interval lying outside $\mathbf M_\Gamma$, as depicted as black rectangles in \cref{ext}.
We denote such an extension by $U_{l_k}^\mathrm{st}$.
Let $\mathbf M'_\Gamma$ be a brick manifold obtained by the standard brick decomposition of  $S \times (0,1) \setminus \cup_{k=1}^\nu U_{l_k}^\mathrm{st}$, i.e. the decomposition such that each brick is  the closure of maximal union of parallel horizontal subsurfaces as (i) in \cref{setting}.
Let ${\mathbf E}^\mathrm{br}$ be the union of all bricks in $\mathbf M'_\Gamma$ that are homotopic in $S \times (0,1)$ into $\Sigma \times [2/3,\varsigma]$.
Then we define $\widehat{\mathbf E}^\mathrm{ext}$ to be the intersection $\mathbf M_\Gamma \cap {\mathbf E}^\mathrm{br}$.

\begin{center}
\begin{figure}
\includegraphics[height=4cm]{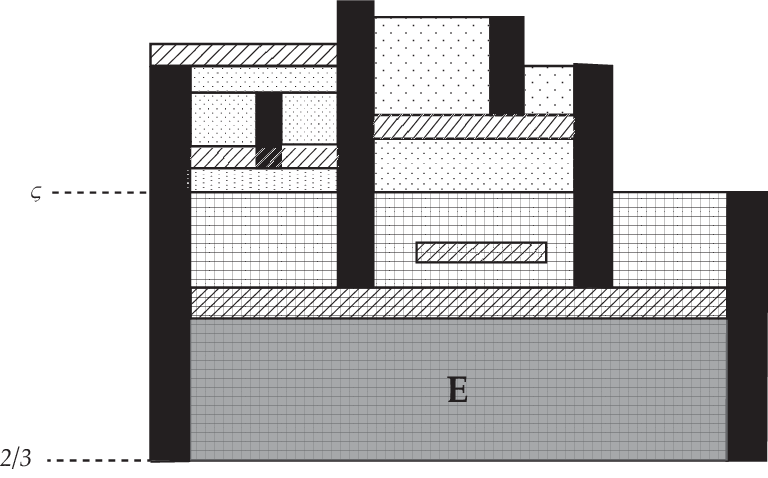}
\caption{The definition of $\widehat{\mathbf E}^\mathrm{ext}$.
Black rectangles are extended cusp neighbourhoods.
The grey transparent region is $\mathbf E$.
The diagonally lined regions lie outside $\mathbf M_\Gamma$.
The crossed region is $\Sigma \times [2/3,\varsigma] \cap \mathbf M_\Gamma'$.
The dotted regions are those that lie outside $\Sigma \times [2/3,\varsigma]$ but need to be added to construct $\widehat{\mathbf E}^\mathrm{ext}$.}
\label{ext}
\end{figure}
\end{center}

\noindent {\bf Pulling back the extended brick $\widehat{\mathbf E}^\mathrm{ext}$ to the sequence $\mathbf M_n$:}
Recall that corresponding to the geometric convergence of $\mathbf M_n$ to $\mathbf M_\Gamma$, there is an approximate isometry $\mathbf h_n\colon B_{R_n}(\mathbf M_n, \mathbf x_n) \rightarrow B_{K_n R_n}(\mathbf M_\Gamma, \mathbf x_\infty)$, and that $\mathbf h_n$ is a $K_n$-bi-Lipschitz homeomorphism, where $R_n \rightarrow \infty$, $K_n \rightarrow 1$, and $\mathbf x_\infty$ lies on the algebraic locus.
We should also recall, as was shown in \cite[Lemma 9.2]{OhD}, that the embeddings of $\mathbf M_\Gamma$ and $\mathbf M_n$ into $S \times (0,1)$ can be arranged so that the geometric convergence preserves the horizontal levels.

We shall now describe a particular complement $\mathbf M_n'$ of some Margulis tubes in $M_n$. 
Fix a positive constant $\epsilon_0$ less than the three-dimensional Margulis constant.
Then for large $n$, corresponding to cusp neighbourhoods $U_{l_1}, \dots , U_{l_\nu}$, each $\mathbf M_n$ has $\epsilon_0$-Margulis tubes converging to these cusp neighbourhoods.
We define $U^n_{l_k}$ to be the $\epsilon_0$-Margulis tube in $\mathbf M_n$ corresponding to $U_{l_k}$.
Then we define $\mathbf M_n'$ to be $\mathbf M_n \setminus \cup_{k=1}^\nu U^n_{l_k}$.
Since there is a homeomorphism between $\mathbf M_n'$  and $\mathbf M'_\Gamma$ preserving the horizontal levels, the union of bricks ${\mathbf E}^\mathrm{br}$ can be pulled back to a union of bricks in $\mathbf M_n'$, which we define to be $\widehat{\mathbf E}_n^\mathrm{ext}$. {\it We emphasise that $\widehat{\mathbf E}_n^\mathrm{ext}$ is a union of bricks in $\mathbf M_n'$, equipped with the standard brick decomposition, and not in $\mathbf M_n$. The difference arises precisely
	from the Margulis tubes $U^n_{l_k}$.}

We take $\epsilon >0$ small enough so that the $\epsilon$-neighbourhoods of $U_{l_1}, \dots$, which we denote by $U_{l_1}(\epsilon), \dots $ respectively, are disjoint.
Making $\epsilon$ smaller if necessary, we can assume that, for all $n$, the $\epsilon$-neighbourhoods  $U^n_{l_1}(\epsilon), \dots$ of $U^n_{l_1}, \dots$  are disjoint.
Now we define $\mathbf E^\mathrm{ext}$ to be $\widehat{\mathbf E}^\mathrm{ext} \setminus \cup_{k=1}^\nu U_{l_k}(\epsilon)$ and $\mathbf E^\mathrm{ext}_n$ to be $\widehat{\mathbf E}_n \setminus \cup_{k=1}^\nu U^n_{l_k}(\epsilon)$.
Then, by definition, $\{\mathbf E^\mathrm{ext}_n\}$ converges geometrically to $\mathbf E^\mathrm{ext}$ under the geometric convergence of $\mathbf M_n$ to $\mathbf M_\Gamma$.
We can define $\mathbf F^\mathrm{ext}$ and $\mathbf F_n^\mathrm{ext}$ when $\mathbf F$ among $\mathbf F_1, \dots, \mathbf F_t$ is minimal in the  same way.\\

\noindent {\bf Quasiconvexity of extended bricks $\widehat{\mathbf E}^\mathrm{ext}$:}
We shall need the following lemma, which is a generalisation of \cite[Lemma 2.25]{mahan-red}.

\begin{lemma}
\label{quasi-convex}
Let $\mathbf E$ be one of $\mathbf E_1, \dots , \mathbf E_s; \mathbf F_1, \dots , \mathbf F_t$, and assume that it is minimal.
Let $U_{l_1}, \dots$ be the cusp neighbourhoods associated with  $\mathbf E$, and suppose that $U_{l_1}, \dots , U_{l_\nu}$ are those specified above.
We electrocute $\widetilde{\mathbf M}_\Gamma$ with respect to the components of the pre-images of $\sqcup_{k=1}^\nu U_{l_k}$, and obtain a Gromov hyperbolic metric $d^0_{\mathbf M}$.
In the same way, we electrocute $\widetilde{\mathbf M}_n$ at the components of the preimages of $\sqcup_{k=1}^\nu U^n_{l_k}$.
Then, there is a constant $K$ depending only on $\Gamma$ such that each component of the preimage $\widetilde{\mathbf E}^\mathrm{ext}$ of $\mathbf E^\mathrm{ext}$   in  $(\widetilde{\mathbf M}_\Gamma, d^0_{\mathbf M})$ and each component of the preimage  $\widetilde{\mathbf E}^\mathrm{ext}_n$ of $\mathbf E^\mathrm{ext}_n$ in $(\widetilde{\mathbf M}_n, d^0_n)$ are $K$-quasi-convex.
\end{lemma}

\begin{proof}
Since $\mathbf E^{\mathrm{ext}}$ is the geometric limit of $\mathbf E^\mathrm{ext}_n$ under the geometric convergence of $\mathbf M_n$ to $\mathbf M_\Gamma$, it suffices to prove that there exists $K \geq 1$
such that for all $n$,  each component of $\widetilde{\mathbf E}_n^{\mathrm{ext}}$ 
is $K$-quasi-convex in $\widetilde{\mathbf M}_n$.

Each horizontal boundary component of $\mathbf E^\mathrm{ext}_n$ corresponds to a horizontal boundary component of $\widehat{\mathbf{E}}^\mathrm{ext}$ under $\mathbf h_n$.
Fix a such a horizontal boundary component $F$ of $\widehat{\mathbf E}^\mathrm{ext}$, and consider the corresponding horizontal boundary component $F^n$ of $\mathbf E_n^\mathrm{ext}$ for each $n$.
Let $B_n$ be the brick in $\mathbf E^\mathrm{ext}_n$ containing $F^n$.
The geometric limit of $\mathbf E^\mathrm{ext}_n$ with basepoint on $F^n$ is a brick in $\mathbf E^\mathrm{ext}$ containing $F$.
We shall only describe the situation in the case when $\mathbf E$ is among $\mathbf E_1, \dots , \mathbf E_s$, and hence $F$ is an upper boundary component.
The case when $\mathbf E$ is among $\mathbf F_1, \dots , \mathbf F_t$ and $F$ is a lower boundary component can be dealt with in the same way.

The model manifold $\mathbf M_n$ contains Margulis tubes which induce a decomposition of $\mathbf M_n$ into blocks as in Minsky \cite{minsky-elc1}.
(We are using a slightly non-standard definition of Margulis tube here. We simply fix a constant $\ep>0$ and by \lq Margulis tube', we
means a tubular neighbourhood of a closed geodesic whose length is uniformly bounded.)
Let $a_1, \dots , a_p$ be core curves of annuli which are the intersection of $F^n$ with these Margulis tubes. 
The lengths of the core curves $a_1,\dots, a_p$ may be greater than the three-dimensional Margulis constant, but are bounded from above by the Bers' constant depending only the topological type of $S$  \cite[p. 20]{minsky-elc1}. 
By taking $\ep$ large enough, we can ensure that the curves constitute a pants decomposition of $F^n$.\\

\noindent {\bf Drilling:} We now proceed to drill the Margulis tubes  $U^n_{l_1}, \dots, U^n_{l_\nu}$ of $\mathbf M_n$, i.e.  remove the core geodesics of $U^n_{l_1}, \dots, U^n_{l_\nu}$ and equip the resulting rank two cusp with a complete hyperbolic metric, while leaving all the end-invariants 
(ending laminations or conformal structures) of $M_n$ unchanged (see \cite{BB} for details on drilling).
Let $M_n^\mathrm{dr} $ denote the drilled manifold obtained by drilling $M_n$.

 The drilling theorem of Brock-Bromberg \cite{BB} shows that there exist positive constants $\delta_n$
 satisfying the following:
 
 \begin{enumerate}
 \item $\delta_n \to 0$ as $n \to \infty$.
 \item $M_n$ is $(1+\delta_n)$-bi-Lipschitz homeomorphic to  ${M}_n^\mathrm{dr}$ away from the Margulis tubes and cusps corresponding to $U^n_{l_1}, \dots, U^n_{l_\nu}$.
 \end{enumerate} 
The  model manifold ${\mathbf M}_n^\mathrm{dr}$ is a model manifold of the non-cuspidal part of $M_n^\mathrm{dr}$ and is obtained by replacing the Margulis tubes $U^n_{l_1}, \dots, U^n_{l_\nu}$ of $\mathbf M_n$ by their torus boundaries.
Topologically,  ${\mathbf M}_n^\mathrm{dr}$ is the same as $\mathbf M_n \setminus \cup_{k=1}^\nu U^n_{l_k}$.

Since the new boundary components of ${\mathbf M}_n^\mathrm{dr}$ are precisely the torus boundaries 
of $U^n_{l_1}, \dots , U^n_{l_\nu}$, the brick decomposition of ${\mathbf M}_n^\mathrm{dr}$ can be assumed to coincide with that of $\mathbf M_n \setminus \cup_{k=1}^\nu U^n_{l_k}$. Recall that
$B_n$ is the brick in $\mathbf E^\mathrm{ext}_n$ containing $F^n$.
Thus, there is a brick and a horizontal boundary in ${\mathbf M}_n^\mathrm{dr}$ corresponding to $B_n$ and $F^n$, which we denote respectively by ${B}_n^\mathrm{dr}$ and ${F}^{n,\mathrm{dr}}$.
It thus suffices  to prove the uniform quasi-convexity of ${B}_n^\mathrm{dr}$.\\

\noindent {\bf Quasi-Fuchsian cover of drilled manifold:}
Recall that $\mathbf E \cong \Sigma \times [2/3,5/6)$.
Consider the covering $\overline{\mathbf M}_n$ of ${\mathbf M}_n^\mathrm{dr}$ corresponding to $\pi_1(\Sigma)=\pi_1({B}_n^\mathrm{dr})$.
Then  $\overline{\mathbf M}_n$  is a (cover corresponding to a) quasi-Fuchsian group of  type $\Sigma$, where the boundary components of $\Sigma$ are taken to be parabolics.
The shortest pants decomposition of the upper convex core boundary of $\overline{\mathbf M}_n$  is projected down to ${\mathbf M}_n^\mathrm{dr}$ as a pants decomposition with length bounded independently of $n$.
It follows that it is within a uniformly bounded distance from the simplex spanned by $a_1, \dots , a_p$ in the curve complex of $F$.
Therefore, if there is no homotopy between the pleated surface realising $a_1, \dots , a_p$ in $\overline{\mathbf M}_n$ and the upper convex core boundary with uniformly bounded diameter,  then there must be a parabolic element in the geometric limit of $\overline M_n$ which corresponds to a non-peripheral curve of ${F}^{n,\mathrm{dr}}$.
This contradicts the assumption that $F$ is an upper boundary component of $\mathbf E^\mathrm{ext}$ and there is no parabolic curve associated with $\mathbf E$ above such a boundary component which is homotopic into $F$.
This implies that there does exist such a homotopy with a uniformly bounded diameter (or equivalently, the tracks of points in the homotopy have  uniformly bounded diameter).

The lower horizontal boundary component of $\mathbf E^\mathrm{ext}$, which we denote by $\Sigma_\Gamma$, is homeomorphic to $\Sigma$, and if we take the covering of $M_\Gamma$ associated with $\pi_1(\Sigma)$, this boundary faces a geometrically finite end.
This horizontal surface corresponds to the lower boundary component of $\mathbf E_n^\mathrm{ext}$, which we denote by $\Sigma_n$ for each $n$.
The corresponding lower end of $\overline M_n$ has a neighbourhood converging geometrically to a neighbourhood of this end.
Therefore, by the same argument as above, we see that there is a homotopy between the pleated surface realising the core curve of the Margulis tubes intersecting $\Sigma_n$ in $\overline{M}_n$ and the lower convex core boundary with a uniformly bounded diameter.\\

\noindent {\bf Completing the proof of quasiconvexity, Lemma \ref{quasi-convex}:}
Now, we prove the uniform quasi-convexity of $\widetilde{\mathbf E}^\mathrm{ext}_n$ by contradiction.
Suppose that there is a sequence of arcs $a_n$ in $\widetilde{\mathbf E}^\mathrm{ext}_n$ such that
\begin{enumerate}
\item  The geodesic arc $a^*_n$ is homotopic to $a_n$ relative to endpoints.
\item  There exists a point $p_n$ in $a^*_n$ whose distance from $\widetilde{\mathbf E}^\mathrm{ext}_n$ with respect to $d^0_n$ goes to $\infty$ as $n \rightarrow \infty$.
\end{enumerate}

We can assume that $\mathbf E_n^\mathrm{ext}$ and the projection of these arcs $a_n$ and $a_n^*$ into $\mathbf M_n$ all lie in ${\mathbf M}_n^\mathrm{dr}$ instead of $\mathbf M_n$ since ${\mathbf M}_n^\mathrm{dr}$ and $\mathbf M_n$ are uniformly bi-Lipschitz away from cusps and Margulis tubes.
We consider the case when $p_n$ lies above $\widetilde{\mathbf E}^\mathrm{ext}_n$ with respect to the parametrisation $\widetilde S \times (0,1)$, where $\widetilde S$ denotes the universal cover of $S$.
(When $p_n$ lies below $\widetilde{\mathbf E}^\mathrm{ext}_n$, the argument works exactly in the same way by considering $\Sigma$ instead of $F_1, \dots , F_1$ in the argument below.)
Let $F_1, \dots F_q$ be the upper horizontal boundary components of $\mathbf E^\mathrm{ext}_\Gamma$, and $F^n_1, \dots , F^n_q$ the corresponding horizontal boundary components of $\mathbf E^\mathrm{ext}_n$.
Then there must be a subarc $b_n^*$ of the projection of $a_n^*$ into ${\mathbf M}_n^\mathrm{dr}$ containing the projection $\bar p_n$ of $p_n$, whose endpoints lie on the union of $F^n_1, \dots , F^n_q$ and $\partial U^n_{l_1}, \dots \partial U^n_{l_\mu}$ and which is homotopic into $\cup_{k=1}^q F^n_k \cup \cup_{j=1}^\nu U^n_{l_j}(\epsilon)$.
(Note that we are now in ${\mathbf M}_n^\mathrm{dr}$, and hence $U^n_{l_1}, \dots , U^n_{l_\nu}$ are torus cusp neighbourhoods.)
See Figure \ref{arc}.

\begin{center}
\begin{figure}
\includegraphics[height=4cm]{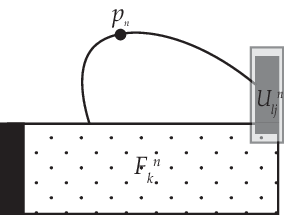}

\caption{The arc $b^*_n$ contains the point $p_n$, and is homotopic into  $\cup_{k=1}^q F^n_k \cup \cup_{j=1}^\nu U^n_{l_j}(\epsilon)$}
\label{arc}
\end{figure}
\end{center}

The distance between $\bar p_n$ and $\mathbf E_n^\mathrm{ext}$ with respect to $d^0_n$ goes to $\infty$ by our choice of $p_n$.
On the other hand, $b_n^*$ must be contained in a uniformly bounded neighbourhood of the union of the convex cores of $\mathbf E_n^\mathrm{ext}$ and $\cup_{j=1}^\nu U^n_{l_j}$ since $b_n^*$ must be within a uniformly bounded distance from the corresponding electro-ambient geodesic (see Lemma \ref{ea-strong}) with respect to the electric metric obtained by electrocuting  components of the preimages of  $\cup_{j=1}^\nu U^n_{l_j}$ in the universal cover $\widetilde{\mathbf M}_n$.
Since there is a homotopy with uniformly bounded diameter between $F^n_k$ and the corresponding convex core boundary, this is a contradiction.
\end{proof}

\noindent {\bf The induction step:} We now explain the induction step mentioned in the outline of the proof of Proposition \ref{EPP infinite} following the statement of Proposition \ref{EPP infinite}.
This will culminate in Lemma \ref{passing corresponding one}.

It follows, as was explained in \S \ref{rel hyp},  that we can electrocute the components of  the pre-image $\widetilde{\mathbf E}^\mathrm{ext}$ of $\mathbf E^\mathrm{ext}$ in $(\widetilde{\mathbf M}_\Gamma, d_{\mathbf M}^0)$  and get a Gromov hyperbolic (pseudo-)metric on $\widetilde{\mathbf M}_\Gamma$. 
We can also electrocute the components of the preimage $\widetilde{\mathbf{E}}_n^\mathrm{ext}$ of $\mathbf E_n^\mathrm{ext}$ in $(\widetilde{\mathbf M}_n, d_n^0)$ and get a Gromov-hyperbolic space.
We denote the new electric metrics by $d^1_{\mathbf M}$ and $d^1_n$ respectively, and use 
 $\widetilde{\mathbf M}_\Gamma^1$ and $\widetilde{\mathbf M}_n^1$ as  shorthand for 
 $(\widetilde{\mathbf M}_\Gamma, d^1_{\mathbf M})$ and $(\widetilde{\mathbf M}_n, d^1_n)$. 
 Then $\widetilde{\mathbf M}_n^1$ converges geometrically to $\widetilde{\mathbf M}_\Gamma^1$.
 In the new metric,  geodesic arcs homotopic to subarcs of $r_\zeta^{\mathbf M}$ going deep   into $\widetilde{\mathbf E}$ may be conveniently ignored, which is what electrocution allows us to do.
However, we need to handle arcs that go deep into the pre-images of the other ends. For this, we need the second electrocution process as follows.

Let $\mathbf E'$ be one of $\mathbf E_1, \dots , \mathbf E_s; \mathbf F_1, \dots , \mathbf F_t$ other than the $\mathbf E$  we took in Lemma \ref{quasi-convex}.
We say that $\mathbf E'=\Sigma' \times [2/3, 5/6)$ among $\mathbf E_1, \dots , \mathbf E_s$ is second minimal if there is no $\Sigma_j'\, (j=1, \dots, t)$ that can be isotoped into $\Sigma$ except for the case when $\mathbf F_j$ is the $\mathbf E$  chosen above.
In the same way, we define $\mathbf E'= \Sigma \times (1/6, 1/3]$ among $\mathbf F_1, \dots, \mathbf F_t$ to be second minimal.

Suppose that $\mathbf E'$ is second minimal.
For simplicity of exposition, we assume that $\mathbf E'$ is one of $\mathbf E_1, \dots , \mathbf E_s$.
Below we define ${\mathbf E'}^\mathrm{ext}$ and ${\mathbf E'}_n^\mathrm{ext}$ in the same way as we defined $\mathbf E^\mathrm{ext}$ and $\mathbf E_n^\mathrm{ext}$:\\
By renumbering the $U_{l_k}$, let $U_{l_1}, \dots , U_{l_w}$ be the cusp neighbourhoods that abut on $\mathbf E'$.
We consider the brick $\dot{\mathbf E}'$ in $S \times (0,1) \setminus \cup_{k=1}^w U_{l_k}$ containing $\mathbf E'$,  let $s'$ be $\min \sup_{k=1}^w \sup U_{l_k}$, and let $U_{l_{w+1}}, \dots , U_{l_x}$ the cusp neighbourhoods that intersect $\Sigma' \times \{s'\}$.
Then we define $\hat{\mathbf E'}^\mathrm{ext}$ to be the intersection with $\mathbf M_\Gamma$ of the union of all bricks in $S \times (0,1) \setminus \cup_{k=1}^x U_{l_k}^\mathrm{st}$ that are homotopic into $\Sigma' \times [2/3, s']$. 
By deleting the $\epsilon$-neighbourhoods of $U_{l_1}, \dots , U_{l_x}$, we get ${\mathbf E'}^\mathrm{ext}$.
As before, by pulling back ${\mathbf E'}^\mathrm{ext}$ by $\mathbf h_n$, we get ${\mathbf E'}^\mathrm{ext}_n$.


Let $\widetilde{\mathbf E'}^\mathrm{ext}$ and $\widetilde{\mathbf E'}^\mathrm{ext}_n$ be the pre-images of ${\mathbf E'}^\mathrm{ext}$ and ${\mathbf E'}^\mathrm{ext}_n$ in $\widetilde{\mathbf M}_\Gamma$ and $\widetilde{\mathbf M}_n$ respectively.
Then by nearly the same argument as in Lemma \ref{quasi-convex}, we can show that each component of $\widetilde{\mathbf E'}^\mathrm{ext}$ (resp. $\widetilde{\mathbf E'}^\mathrm{ext}_n$)  is $K'$-quasi-convex 
in $\widetilde{\mathbf M}_\Gamma^1$ (resp. $\widetilde{\mathbf M}_n^1$) after electrocuting the pre-images of cusp neighbourhoods of those among $U_{l_1}, \dots , U_{l_x}$ (resp.  Margulis tubes among $U^n_{l_1}, \dots , U^n_{l_x}$) that have not been electrocuted in the previous step.
Here $K'$ is a constant depending only on $\Gamma$.

The part where we need to modify the proof of Lemma \ref{quasi-convex} is the argument to deal with the case when the arc $b^*_n$ goes out from the lower horizontal boundary $\Sigma_n$. In
Lemma \ref{quasi-convex} we used the assumption that $\mathbf E$ is minimal.
In the present setting, it is possible that $\mathbf E$ is homotopic into $\mathbf E'$ in $\mathbf M_\Gamma$.
Still the argument involving the existence of homotopies with uniformly bounded diameters works since we have already electrocuted $\mathbf E^\mathrm{ext}$ and $\mathbf E_n^\mathrm{ext}$ and we can get homotopies with bounded diameter with respect to electrocuted metrics $d_n^1$.
Therefore (c.f.\ the discussion in \S \ref{rel hyp}), we can again electrocute $\widetilde{\mathbf E'}^\mathrm{ext}$ in $(\widetilde{\mathbf M}_\Gamma, d^1_\mathbf{M})$  and $\widetilde{\mathbf E'}^\mathrm{ext}_n$ to get new Gromov hyperbolic metrics   denoted by $d^2_\mathbf{M}$ on $\mathbf M_\Gamma$ and  by $d^2_n$ on  $\mathbf M_n$ respectively .

We repeat this  procedure inductively.
Assume that we have defined the electric metrics $d^{m-1}_\mathbf{M}$ and $d^{m-1}_n$.
 Then at the next step of induction, we consider an $m$-th minimal  $\mathbf E$ among $\mathbf E_1, \dots , \mathbf E_s; \mathbf F_1, \dots, \mathbf F_t$. We construct $\mathbf E^{m\mathrm{ext}}$ and $\mathbf E^{m\mathrm{ext}}_n$ in the same way as we defined $\mathbf {E'}^{\mathrm{ext}}$ and ${\mathbf E'}^\mathrm{ext}_n$. 
 We define a new electric metric $d^m_\mathbf{M}$ and $d^m_n$, by electrocuting  the pre-images of $\mathbf E^{\mathrm{ext}}$ and ${\mathbf E'}^\mathrm{ext}_n$ together with preimages of suitable cusp neighbourhoods or Margulis tubes. The new metrics  are again Gromov hyperbolic.
Finally, we get  hyperbolic electric metrics $d^{s+t}_\mathbf{M}$ and $d^{s+t}_n$,  and denote them by $\bar d_\mathbf{M}$ and $\bar d_n$ respectively.

Recall that we have a  ray $r_\zeta^{\mathbf M}: [0,\infty) \rightarrow \widetilde{\mathbf M}_\Gamma$.
For $s_1, s_2 \in [0,\infty)$ with $s_1< s_2$, we denote by $r_\zeta^{\mathbf M}(s_1, s_2)^*$ the geodesic arc, parametrised by length, homotopic to $r_\zeta^{\mathbf M}|[s_1, s_2]$ fixing the endpoints.
If we connect $r^\mathbf{M}_\zeta(s_1)$ and $r^\mathbf{M}_\zeta(s_2)$ by a geodesic arc with respect to $\bar d_{\mathbf M}$, it fellow-travels $r^\mathbf{M}_\zeta|[s_1, s_2]$ in $(\widetilde{\mathbf M}_\Gamma, \bar d_\mathbf{M})$ since all geometrically infinite ends  into which the geodesic may escape are electrocuted, along with the cusp neighbourhoods abutting on them.
This shows that the electro-ambient quasi-geodesic homotopic to $r_\zeta^{\mathbf M}|[s_1, s_2]$ fixing the endpoints must pass through the corresponding components of the pre-images of $\mathbf E^{\mathrm{ext}}$ for the components of $p^{-1}(\Sigma_j)\, (j=1, \dots , s)$ and  $p^{-1}(\Sigma_j')\, (j=1, \dots t)$  that  $r_\zeta([s_1,s_2])$ intersects essentially.
Therefore, by Lemma \ref{ea-strong}, we have the following.

\begin{lemma}
\label{passing corresponding one}
There is a constant $L$ depending only on $\Gamma$ with the following property.
Let $\{R_\alpha\}_\alpha$ be the collection of all components of  $p^{-1}(\Sigma_j)\, (j=1, \dots , s)$ and  $p^{-1}(\Sigma_j')\, (j=1, \dots t)$  that  $r_\zeta([s_1,s_2])$ intersects essentially (relative to the endpoints).
We consider $\mathbf E^{\mathrm{ext}}$ for every $\mathbf E$ among $\mathbf E_1, \dots , \mathbf E_s; \mathbf F_1, \dots, \mathbf F_t$, and let $\widetilde{E}_{R_\alpha}$ be a component of the pre-image of one of them  corresponding to  $R_\alpha$.
Then $r_\zeta^{\mathbf M}(s_1, s_2)^*$ is contained in the $L$-neighbourhood of $\cup_\alpha \widetilde{E}_{R_\alpha}$ with respect to the metric $d_\mathbf{M}$.
\end{lemma}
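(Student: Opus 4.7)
The plan is to apply Lemma \ref{ea-strong} to the Gromov hyperbolic metric $\bar d_{\mathbf M}$, making use of the fellow-travelling property asserted just before the statement: a $\bar d_{\mathbf M}$-geodesic joining $r_\zeta^{\mathbf M}(s_1)$ and $r_\zeta^{\mathbf M}(s_2)$ stays uniformly close to $r_\zeta^{\mathbf M}|[s_1,s_2]$ in $\bar d_{\mathbf M}$, since every \lq deep' excursion of the ray is into a geometrically infinite end and all such ends have been electrocuted in constructing $\bar d_{\mathbf M}$. Lemma \ref{ea-strong} then provides a constant $R=R(\Gamma)$ such that $r_\zeta^{\mathbf M}(s_1,s_2)^*$ lies in an $R$-neighbourhood (in $d_{\mathbf M}$) of an electro-ambient representative $\beta_q$ of that $\bar d_{\mathbf M}$-geodesic, and I then need to place $\beta_q$ itself in a uniformly bounded $d_{\mathbf M}$-neighbourhood of $\bigcup_\alpha \tilde{E}_{R_\alpha}$.

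To identify the electrocuted sets visited by $\beta_q$ with those of the statement, I would use that each $R_\alpha \subset \HHH^2$ is convex---its boundary being a union of geodesic lifts of closed curves in $S$---so $r_\zeta$ enters and exits each $R_\alpha$ at most once. Combined with the equivariance of the maps used to define $r_\zeta^{\mathbf M}$ and the no-backtracking of the fellow-travelling electric geodesic, this forces the electrocuted sets visited by $\beta_q$ to be exactly the $\tilde{E}_{R_\alpha}$ corresponding to essentially-crossed $R_\alpha$.

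Each maximal subsegment of $\beta_q$ inside a single $\tilde{E}_{R_\alpha}$ is, by the electro-ambient construction, replaced by a $d_{\mathbf M}$-geodesic in $\tilde{\mathbf M}_\Gamma$; by Lemma \ref{quasi-convex} and its inductive analogues after the subsequent rounds of electrocution, this geodesic lies in a uniform $d_{\mathbf M}$-neighbourhood of $\tilde{E}_{R_\alpha}$. The connecting non-electric segments of $\beta_q$ between consecutive electrocuted sets have bounded $d_{\mathbf M}$-length, controlled via the vertical cusp annuli $A \times [1/3,2/3]$ that were absorbed into the extended neighbourhoods $\mathbf E^{\mathrm{ext}}$ and $\mathbf F^{\mathrm{ext}}$ by construction. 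Combining, $\beta_q$ lies in an $L'$-neighbourhood (in $d_{\mathbf M}$) of $\bigcup_\alpha \tilde{E}_{R_\alpha}$ for some $L'=L'(\Gamma)$, and the statement follows with $L=R+L'$.

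The hardest point is the uniformity across the $s+t$ rounds of electrocution: the quasi-convexity constants of the components of the pre-images of $\mathbf E_j^{\mathrm{ext}}$ and $\mathbf F_k^{\mathrm{ext}}$ in every intermediate metric $d^l_{\mathbf M}$ must be bounded depending only on $\Gamma$. This requires propagating the uniform bi-Lipschitz constant of the model map from Theorem \ref{thm_osa} through the covering-theorem argument of Lemma \ref{quasi-convex} at each stage. A related subtlety is handling consecutive essential crossings of different types---a lower $R_\alpha$ from some $p^{-1}(\Sigma_j)$ and an upper one from $p^{-1}(\Sigma_k')$---whose frontiers in $\HHH^2$ need not share a common curve; here the vertical annuli $A \times [1/3,2/3]$ included in the extended neighbourhoods serve as uniformly bounded bridges between the two families inside $\tilde{\mathbf M}_\Gamma$.
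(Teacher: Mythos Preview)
Your proposal is correct and follows the paper's approach: the paper deduces this lemma directly from the fellow-travelling observation stated just before it together with Lemma~\ref{ea-strong}, and you have unpacked exactly those ingredients---the iterated quasi-convexity coming from Lemma~\ref{quasi-convex} and the role of the absorbed cusp annuli in $\mathbf E^{\mathrm{ext}}$, $\mathbf F^{\mathrm{ext}}$---that the paper's one-line invocation leaves implicit. Your identification of the two delicate points (uniformity of the quasi-convexity constants across the $s+t$ electrocution rounds, and the bridging between the upper and lower families via the vertical annuli) is accurate and in fact goes beyond what the paper spells out.
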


We also have a $\rho_n$-equivariant map $\Phi_n \colon \HHH^2 \rightarrow \HHH^3$, and  a ray $\Phi_n \circ r_\zeta$ in $\HHH^3$.
By pulling back this ray by the lift of the model map $f_n$, we get a ray $r^n_\zeta \colon [0,\infty) \rightarrow \widetilde{\mathbf M}_n$, where $\widetilde{\mathbf M}_n$ is the universal cover of $\mathbf M_n$.
For $s_1<s_2 \in [0,\infty)$, we let $r^n_\zeta(s_1,s_2)^*$ be the geodesic arc in $\widetilde{\mathbf M}_n$ connecting $r^n_\zeta(s_1)$ and $r^n_\zeta(s_2)$.
Recall also that each component $R$ of $p^{-1}(\Sigma_j)$ or $p^{-1}(\Sigma_j')$ corresponds to a component of the pre-image of $\mathbf E_n^\mathrm{ext}$ for $\mathbf E=\mathbf E_j$ or $\mathbf E=\mathbf F_j$.
We denote the component of the preimage of $\mathbf E_n^\mathrm{ext}$ corresponding to $R$ by $\widetilde E_R^n$.
Then, using  $\mathbf h_n$ to pull back the components appearing in  Lemma \ref{passing corresponding one}, we get the following.

\begin{lemma}
\label{uniformly farther}
There is a constant $L'$ independent of $n$ with the following property.
Let $\{R_\alpha\}_\alpha$ be the collection of all components of  $p^{-1}(\Sigma_j)\, (j=1, \dots , s)$ and $p^{-1}(\Sigma_j')\, (j=1, \dots t)$  that  $r_\zeta([s_1,s_2])$ intersects essentially (relative to the endpoints) as in Lemma \ref{passing corresponding one}.
Then $r^n_\zeta(s_1, s_2)^*$ is contained in the $L'$-neighbourhood of $\cup_\alpha \widetilde{E}^n_{R_\alpha}$ with respect to the metric $\bar d_n$.
\end{lemma}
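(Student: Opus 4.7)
The plan is to mimic the proof of Lemma \ref{passing corresponding one} directly inside $(\tilde{\mathbf M}_n, d_{\mathbf M}^n)$, ensuring every constant is independent of $n$. The starting point is to verify that Lemma \ref{quasi-convex} goes through verbatim for each $\mathbf M_n$ in place of $\mathbf M_\Gamma$: the cover of $\HHH^3/G_n$ associated to $\pi_1(\Sigma)$ for any essential subsurface $\Sigma$ attached to a chosen end is itself a Kleinian surface group, and Minsky's bi-Lipschitz model gives an embedding of the corresponding $\mathbf E^{n\,\mathrm{ext}}$ (respectively $\mathbf F^{n\,\mathrm{ext}}$) into the non-cuspidal part of its convex core with bi-Lipschitz constant depending only on $\chi(S)$. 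Iterating through the same $s+t$ minimality rounds used to construct $\bar d_{\mathbf M}$ produces a corresponding iterated electrocution $\bar d^n$ on $\tilde{\mathbf M}_n$, whose quasi-convexity and hyperbolicity constants depend only on $\chi(S)$.

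Granted this uniform hyperbolicity, $\tilde f_n^{-1}(r_n(s_1,s_2)^*)$ is a uniform bi-Lipschitz quasi-geodesic in $(\tilde{\mathbf M}_n, d_{\mathbf M}^n)$, and hence a uniform electric quasi-geodesic in $(\tilde{\mathbf M}_n, \bar d^n)$. Its endpoints coincide with those of the path $\tilde f_n^{-1} \circ \Phi_n \circ r_\zeta|[s_1,s_2]$, which by construction passes successively through the components $\tilde E_{R_\alpha}^n$ indexed by the $R_\alpha$ essentially intersected by $r_\zeta$. Standard fellow-traveling for quasi-geodesics with common endpoints in a Gromov hyperbolic space then forces $\tilde f_n^{-1}(r_n(s_1,s_2)^*)$ to remain in a uniform $\bar d^n$-neighbourhood of $\bigcup_\alpha \tilde E_{R_\alpha}^n$.

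Finally, I would invoke Lemma \ref{ea-strong} to transfer the bound from $\bar d^n$ back to $d_{\mathbf M}^n$. An electro-ambient representative of the $\bar d^n$-quasi-geodesic can be chosen so that each excursion through some $\tilde E_{R_\alpha}^n$ is replaced by a $d_{\mathbf M}^n$-geodesic segment contained in the same component; Lemma \ref{ea-strong} then supplies a universal constant $R$ controlling the $d_{\mathbf M}^n$-distance between this electro-ambient path and $\tilde f_n^{-1}(r_n(s_1,s_2)^*)$. Setting $L'$ to be the sum of $R$ and the earlier fellow-traveling constant gives the desired bound.

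The main obstacle is the uniformity of the quasi-convexity constants across $n$, which rests on the fact that the bi-Lipschitz constants in Minsky's model theorem depend only on $\chi(S)$. A subsidiary technical issue is arranging the iterated electrocutions coherently along the sequence so that $\bar d^n \to \bar d_{\mathbf M}$ geometrically; this is handled via the approximate isometries $\mathbf h_n$ together with the compatibility of the brick decompositions of $\mathbf M_n$ and $\mathbf M_\Gamma$ supplied by Theorem \ref{thm_osa}. Once these uniformities are secured, the argument above proceeds exactly as in the proof of Lemma \ref{passing corresponding one}.
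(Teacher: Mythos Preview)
Your proposal is correct and follows essentially the same approach as the paper. The paper's own proof is extremely terse: it sets up the electrocutions $\bar d^n$ on $\tilde{\mathbf M}_n$ by declaring that ``the same argument as in Lemma \ref{quasi-convex}'' applies (exactly your first paragraph), arranges $(\tilde{\mathbf M}_n,\bar d^n)\to(\tilde{\mathbf M}_\Gamma,\bar d_{\mathbf M})$ geometrically, and then states the lemma as obtained ``by pulling back the situation of Lemma \ref{passing corresponding one} above by $\mathbf h_n$.'' Your write-up simply unpacks this last sentence, rerunning the fellow-travelling and electro-ambient argument of Lemma \ref{passing corresponding one} directly in $\tilde{\mathbf M}_n$ and noting that the relevant constants (from Minsky's bi-Lipschitz model and from Lemma \ref{ea-strong}) depend only on $\chi(S)$, which is precisely what makes the bound uniform in $n$.
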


\noindent {\bf Proof of Proposition \ref{EPP infinite}:}\\
Since the action of $\pi_1(S)$ on $\HHH^2$ corresponding to $G$ is properly discontinuous, Assumption \ref{iel} implies that there is a function $g\colon [0,\infty) \to [0,\infty)$ with $g(x) \to \infty$ as $x \rightarrow \infty$ such that $r_\zeta|[s_1,s_2]$ intersects only components of $p^{-1}(\Sigma_j)$ that are at a distance greater than $g(s_1)$ from the origin $o_{\HHH^2}$.
We now fix lifts of the basepoints $\mathbf o_\Gamma$ in $\widetilde{\mathbf M}_\Gamma$ and $\mathbf o_n$ in $\widetilde{\mathbf M}_n$. These are lifts of $\mathbf x_\infty$ and $\mathbf x_n$ respectively.
By the IEL assumption of the present Case I and since the electrocution process has been chosen so that two electrocuted pieces are separated by a minimum distance $\epsilon>0$, we  see that there is a function $k_\Gamma\colon [0,\infty) \rightarrow [0,\infty)$ with $k_\Gamma(x) \to \infty$ as $x\to \infty$ such that if $R$ is a component of $p^{-1}(\Sigma_j)$ that $r_\zeta|[s_1, s_2]$ intersects, then $\widetilde E_R$ is at a distance greater than $k_\Gamma(s_1)$ from $\mathbf o_\Gamma$.
By pulling this back by $\mathbf h_n$ for large $n$, we see that there is a function $k : [0,\infty) \rightarrow [0,\infty)$ with $k \to \infty$ as $x\to \infty$ such that if $R$ is a component of $p^{-1}(\Sigma_j)$ that $r_\zeta|[s_1,s_2]$ intersects, then $\widetilde E^n_R$ is at the  $\bar d_n$-distance greater than $k(s_1)$ from $\mathbf o_n$.
By Lemma \ref{uniformly farther}, this implies that any point of the geodesic arc  $r_n(s_1, s_2)^*$ is at a distance greater than $k(s_1)-L'$ from $\mathbf o_n$.
Since $f_n\colon \mathbf M_n \to \HHH^3/G_n$ is a  bi-Lipschitz map whose Lipschitz constant can be chosen independently of $n$,
this concludes the proof of Proposition \ref{EPP infinite}.
$\Box$

\subsection{Case II: $\bar r_\zeta$ is eventually contained in one subsurface}
Now, we turn to Case II, i.e.\ we suppose that Assumption \ref{iel} does not hold.
Then eventually $r_\zeta(t)$ stays in one component $R$ of $p^{-1}(\Sigma_j)$ or $p^{-1}(\Sigma'_j)$ or $p^{-1}(T_j)$ or $p^{-1}(T_j')$.
We now assume that $R$ is a component of $p^{-1}(\Sigma_j)$.
We can argue in the same way also for the case when $R$ is a component of $p^{-1}(\Sigma_j')$ just turning $\mathbf M_\Gamma$ upside down, whereas for the case when $R$ is a component of $p^{-1}(T_j)$ or $p^{-1}(T_j')$ we need a little modification of the argument, which we shall mention at the end of this subsection.
We can also assume that {\em if $r_\zeta(t)$ is also eventually contained in a component of $p^{-1}(\Sigma'_k)$, then $\Sigma'_k$ is {\em not} contained in $\Sigma_j$ up to isotopy, by choosing a minimal element among components containing $r_\zeta(t)$ eventually}.

\subsubsection{Case II A: when $\zeta$ is an endpoint of a lift of a boundary parabolic curve.}
We first consider the special case when $\zeta$ is an endpoint of a lift $\tilde c$ of a component $c$ of $\Fr \Sigma_j$.
Then $c_\infty(\zeta)$ is a parabolic fixed point of $\rho_\infty(\gamma_c)$, where $\gamma_c$ is an element of $G$ corresponding to $c$.
We consider the  geodesic axis $a_c$ of $\rho_n(\gamma_c)$, and let  $\mathcal N_n$ be a neighbourhood in $\HHH^3$ given by a lift of the $\epsilon_0$-Margulis around the projection of $a_c$ to $M_n$ for large $n$.
Correspondingly, there is a horoball $\mathcal N_\infty$ stabilised by $\rho_\infty(\gamma_c)$ to which $(\mathcal N_n)$ converges geometrically.
Since there is an upper bound for the distance from $\mathcal N_\infty$  to 
any point on $\Phi_\infty \circ \tilde c$, and hence $\Phi_\infty \circ r_\zeta$, there is an upper bound independent of $n$ for the distance from any point on the image of  $\Phi_n \circ r_\zeta$ to $\mathcal N_n$.

We define a broken geodesic arc $r_n^+(s_1,s_2)$  consisting of three geodesic arcs as follows. Let $\delta_{s_1}^n$ and $\delta_{s_2}^n$ be the shortest geodesic arcs that connect  $\Phi_n \circ r_\zeta(s_1)$ and $\Phi_n \circ r_\zeta(s_2)$ respectively with $\mathcal N_n$.  Let $\delta_{s_1,s_2}^n$ be the geodesic arc in $\mathcal N_n$ connecting the endpoint of $\delta_{s_1}$ on $\Fr \mathcal N_n$ with that of $\delta_{s_2}$.
We define $r_n^+(s_1,s_2)$ to be the concatenation $\delta^n_{s_1} * \delta^n_{s_1,s_2} * \delta^n_{s_2}$.
Then the observation in the previous paragraph implies that $r_n^+(s_1,s_2)$ is a uniform quasi-geodesic, i.e., there are constants $A,B$ independent of $s_1,s_2$ and $n$ such that $r_n^+(s_1,s_2)$ is an $(A,B)$-quasi-geodesic.

Now, by the convexity of Margulis tubes and the properness of $\Phi_n \circ r_\zeta$, it is easy to check that there is a function $g^+\colon [0,\infty) \to [0,\infty)$ with $g^+(u) \to \infty$ as $u \to \infty$ such that any point in $r_n^+(s_1,s_2)$ lies outside the $g^+(s_1)$-ball centred at $o_{\HHH^3}$.
Since $r_n^+(s_1,s_2)$ is uniformly quasi-geodesic, the geodesic arc connecting $\Phi_n \circ r_\zeta(s_1)$ with $\Phi_n \circ r_\zeta(s_2)$ is contained in a uniform neighbourhood of $r_n^+(s_1,s_2)$.
This shows the EPP condition for $r_\zeta$.

\subsubsection{Case II B: when $\zeta$ is neither an endpoint of the lift of a boundary parabolic curve nor a crown-tip}
Now,  we assume that $\zeta$ is neither an endpoint of a lift of a component of $\Fr \Sigma_j$ nor a crown-tip. Note that the latter is the standing assumption of this section.

Let $G^j$ be a subgroup of $G=\pi_1(S)$ corresponding to $\pi_1(\Sigma_j)$, and define $G_\infty^j$ to be $\rho_\infty(G^j)$.
The non-cuspidal part of the  hyperbolic 3-manifold $\HHH^3/G^j_\infty$ has a geometrically infinite end $e$ with a neighbourhood homeomorphic to $\Sigma_j \times (0,\infty)$.

The proof splits further into subcases.\\

\noindent {Subcase II B (i):}\\
We first prove the EPP condition for  the following special (sub)case. 
We say that the {\em geodesic realisation} of a geodesic (finite or infinite) in (the intrinsic metric on) $\til{S} (\subset \til{\mathbf{M_\G}})$ (resp. ${S} (\subset {\mathbf{M_\G}})$) is the geodesic in $\til{\mathbf{M_\G}}$ (resp. ${\mathbf{M_\G}}$) joining its end points  and path-homotopic to it. 

\begin{lemma}
\label{no smaller surface}
Suppose that there exists a $\Sigma_k'$   contained in $\Sigma_j$ (up to isotopy) and let $\mathbf{F_k}$ be the end corresponding to it.
If the geodesic  realisation $r_\zeta^{\mathbf M}$ in $\mathbf{M_\G}$ is not eventually disjoint from $\mathbf{F_k}$, then the EPP condition holds.
\end{lemma}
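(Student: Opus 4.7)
The plan is to reduce Lemma \ref{no smaller surface} to the electrocution argument already carried out in Case I (Section \ref{case IEL}).

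First I would lift $r_\zeta^{\mathbf M}$ to a ray $\tilde r_\zeta^{\mathbf M}$ in the universal cover $\tilde{\mathbf M}_\Gamma$ via the lift of $\Phi_\infty\circ r_\zeta$ to $\HHH^3$. The hypothesis that $r_\zeta^{\mathbf M}$ is not eventually disjoint from $\mathbf F_k$ then says that $\tilde r_\zeta^{\mathbf M}$ meets the pre-image of $\mathbf F_k^{\mathrm{ext}}$ at arbitrarily late times.

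The crux of the argument is to promote this to the statement that $\tilde r_\zeta^{\mathbf M}$ meets \emph{infinitely many distinct} components of $p^{-1}(\mathbf F_k^{\mathrm{ext}})$. As was used in Section \ref{case IEL}, the components of $p^{-1}(\mathbf F_k^{\mathrm{ext}})$ correspond bijectively to components of $p^{-1}(\Sigma'_k)$ in $\HHH^2$. If $\tilde r_\zeta^{\mathbf M}$ were eventually trapped in a single component $\tilde F$ of $p^{-1}(\mathbf F_k^{\mathrm{ext}})$, then pulling the containment back through this correspondence would force $r_\zeta$ itself to be eventually contained in the matching component of $p^{-1}(\Sigma'_k)$. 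Since $\Sigma'_k$ is contained in $\Sigma_j$ up to isotopy, this would contradict the minimality assumption on $\Sigma_j$ made at the beginning of Section \ref{noncrown}.

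With this in hand, the electrocution scheme of Section \ref{case IEL} goes through with only minor modifications. I would set up the same iterated electrocution of the pre-images of $\mathbf E_i^{\mathrm{ext}}$ and $\mathbf F_i^{\mathrm{ext}}$, inserting $\mathbf F_k^{\mathrm{ext}}$ into the inductive scheme; Lemma \ref{quasi-convex} (applied to $\mathbf F_k$ in place of $\mathbf E_j$) supplies the requisite quasi-convexity. Because $\tilde r_\zeta^{\mathbf M}$ crosses infinitely many distinct components of $p^{-1}(\mathbf F_k^{\mathrm{ext}})$, its image has infinite length in the resulting Gromov-hyperbolic electric metric $\bar d_{\mathbf M}$, exactly as in Lemma \ref{electric EP}. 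Lemmas \ref{passing corresponding one} and \ref{uniformly farther} then transfer the $\bar d_{\mathbf M}$-properness of $\tilde r_\zeta^{\mathbf M}$ to an unbounded lower bound on the $\HHH^3$-distance of the geodesic arcs $r_n(s_1,s_2)^\ast$ from $o_{\HHH^3}$, which is precisely the EPP condition sought by Proposition \ref{ptwisecrit}.

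The step I expect to require most care is fitting $\mathbf F_k^{\mathrm{ext}}$ into the inductive ordering of electrocutions. Since $\Sigma'_k\subset\Sigma_j$, the surface $\mathbf F_k$ is not first-minimal together with $\mathbf E_j$, so it must be inserted at a later stage of the induction, after the $\integers$-cusps separating $\mathbf F_k$ from $\mathbf E_j$ have already been absorbed into $\mathbf E_j^{\mathrm{ext}}$. One must check that the covering-theorem argument in the proof of Lemma \ref{quasi-convex}, applied to the subsurface $\Sigma'_k$, still yields quasi-convexity in the metric obtained \emph{after} these prior electrocutions, and that the resulting $\mathbf F_k^{\mathrm{ext}}$ is genuinely second (or $m$-th) minimal in the sense of Section \ref{case IEL}. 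Granted this, the rest of the argument is a direct transcription of the proof of Proposition \ref{EPP infinite}.
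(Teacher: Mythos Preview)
Your proposal is correct and follows essentially the same route as the paper: use the minimality of $\Sigma_j$ to rule out the ray being eventually trapped in a single component of the pre-image of $\mathbf F_k$, conclude that it crosses infinitely many such components, and then invoke the electrocution argument of Section~\ref{case IEL}.

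One remark: your final worry about ``fitting $\mathbf F_k^{\mathrm{ext}}$ into the inductive ordering'' is unnecessary. The iterated electrocution scheme of Section~\ref{case IEL} was already set up once and for all for \emph{every} $\mathbf E_1,\dots,\mathbf E_s;\mathbf F_1,\dots,\mathbf F_t$, so $\mathbf F_k^{\mathrm{ext}}$ is already present in $\bar d_{\mathbf M}$; nothing needs to be inserted. The only input that Proposition~\ref{EPP infinite} really uses from Assumption~\ref{iel} is that the ray meets infinitely many components of the pre-image of at least one of the $\Sigma_i$ or $\Sigma'_i$, which is exactly what you have established here. (Also, a small notational slip: $p$ is the covering $\til S\to S$, so write ``pre-image of $\mathbf F_k^{\mathrm{ext}}$ in $\tilde{\mathbf M}_\Gamma$'' rather than ``$p^{-1}(\mathbf F_k^{\mathrm{ext}})$''.)
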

\begin{proof}
By our choice of $\Sigma_j$, it is impossible that the geodesic realisation of $r^{\mathbf M}_\zeta$ is eventually contained in one component of the pre-image of $\mathbf{F_k}$ in $\widetilde{\mathbf M}_\Gamma$, as this would imply that $p \circ r_\zeta$ is eventually contained in $\Sigma_k'$ contradicting our choice of $\Sigma_j$.
Therefore the only possibility under the hypothesis is that $r_\zeta^{\mathbf M}$ intersects infinitely many components of the pre-image of $\mathbf{F_k}$.
Then the argument  in Section \ref{case IEL} goes through to show the EPP condition.
\end{proof}

\noindent{Subcase II B (ii):}\\
Next we consider the subcase when there exists a $T'_k$  (among $T_1', \dots , T'_v$) contained in $\Sigma_j$ and $p \circ r_\zeta$ is {\em not} eventually disjoint from $T_k'$.

\begin{lemma}
\label{smaller T}
Suppose that there is a $T_k'$   contained in $\Sigma_j$ up to isotopy.
Let $U_1, \dots, U_q$ be the cusps abutting on the geometrically finite end of $(\HHH^3/\Gamma)_0$ corresponding to $T_k'$.
If the geodesic  realisation of the ray $r_\zeta^\Gamma$  is not eventually disjoint from the pre-images of $U_1\cup \dots \cup U_q$, then the EPP condition holds.
\end{lemma}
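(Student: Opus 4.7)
The approach mirrors that of Case I in \S\ref{case IEL}, replacing the lifted surfaces by lifts of the cusp neighbourhoods $U_1,\dots,U_q$. First I would lift these $\integers$-cusps to $\tilde{\mathbf M}_\Gamma$; since each $U_i$ arises from a maximal parabolic subgroup of $\Gamma$, the bi-Lipschitz model map $\tilde f_\Gamma$ identifies each component of the pre-image with a (slightly deformed) horoball, which is uniformly quasi-convex in $(\tilde{\mathbf M}_\Gamma, d_{\mathbf M})$. Electrocuting these components as in \S\ref{rel hyp} yields a Gromov hyperbolic electric metric $\tilde d_{\mathbf M}$ on $\tilde{\mathbf M}_\Gamma$.

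Under the hypothesis, the lifted geodesic $r_\zeta^{\mathbf M}$ must meet infinitely many distinct components of the pre-image of $U_1\cup\dots\cup U_q$. For otherwise it would eventually be trapped in a single lift, forcing its ideal endpoint --- and hence $\zeta$ --- to be the parabolic fixed point of the stabilising group, so that $\zeta$ would be an endpoint of the lift of a boundary parabolic curve, contradicting the standing hypothesis of Case II B (ii). Consequently $r_\zeta^{\mathbf M}$ has infinite length in $\tilde d_{\mathbf M}$, and by proper discontinuity of $\Gamma$ on $\tilde{\mathbf M}_\Gamma$ there is a proper function $k_\Gamma$ such that any component of the pre-image meeting $r_\zeta^{\mathbf M}([s,\infty))$ lies at $\tilde d_{\mathbf M}$-distance at least $k_\Gamma(s)$ from a fixed basepoint.

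The approximate isometries $\mathbf h_n$ identify the $U_i$ with Margulis tubes $U_i^n$ in $\mathbf M_n$, whose lifts to $\tilde{\mathbf M}_n$ are again uniformly quasi-convex in $(\tilde{\mathbf M}_n, d_{\mathbf M}^n)$. Electrocuting them produces metrics $\tilde d^n$ with $(\tilde{\mathbf M}_n, \tilde d^n)$ converging geometrically to $(\tilde{\mathbf M}_\Gamma, \tilde d_{\mathbf M})$, and a uniform properness estimate $k$ transferred from $k_\Gamma$ by $\mathbf h_n$. Applying Lemma \ref{ea-strong} together with the uniform bi-Lipschitz equivalence $f_n$, the $\HHH^3$-geodesic joining $\Phi_n\circ r_\zeta(s)$ and $\Phi_n\circ r_\zeta(t)$ lies within a uniformly bounded distance of an electro-ambient representative of its $\tilde d^n$-geodesic in $\tilde{\mathbf M}_n$; this representative must pass through the deep lifts of $U_i^n$, hence stays outside a ball of radius tending to infinity with $s$. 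This delivers the EPP condition for $\zeta$.

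The main obstacle, as in Proposition \ref{EPP infinite}, is uniformity in $n$: the quasi-convexity constants of the electrocuted cusp lifts, the electro-ambient fellow-travelling constant of Lemma \ref{ea-strong}, and the bi-Lipschitz constants of the $f_n$ must all be chosen independently of $n$. The parallel to Lemma \ref{uniformly farther} handles this, using the geometric convergence $\mathbf M_n\to \mathbf M_\Gamma$ together with the fact that the $U_i^n$ are Margulis tubes defined with respect to a uniform Margulis constant, so their geometry is controlled uniformly along the sequence.
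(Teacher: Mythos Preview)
Your argument has a genuine gap: electrocuting only the lifts of $U_1,\dots,U_q$ is not enough. Recall that in Case~II the ray $r_\zeta$ is eventually contained in a component of $p^{-1}(\Sigma_j)$, and $\Sigma_j$ supports the simply degenerate end $\mathbf E_j$. Nothing in your electrocution prevents geodesics from escaping into $\tilde{\mathbf E}_j$ (respectively $\tilde{\mathbf E}_j^n$). Concretely, your inference ``this representative must pass through the deep lifts of $U_i^n$, hence stays outside a ball of radius tending to infinity with $s$'' fails: the electro-ambient representative $\beta_q$ consists of geodesic pieces inside cusp lifts together with segments of the electric geodesic $\beta$ \emph{outside} them, and these outside segments are unconstrained by your electrocution. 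Between two far-away horoball lifts, such a segment can perfectly well detour through $\tilde{\mathbf E}_j^n$ and pass close to the basepoint. Lemma~\ref{ea-strong} only pins the true geodesic near $\beta_q$, not near the union of cusp lifts; compare the hypothesis ``since all geometrically infinite ends into which the geodesic may escape are electrocuted'' that precedes Lemma~\ref{passing corresponding one}.

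The paper's proof repairs exactly this point: in addition to the pre-images of $U_1,\dots,U_q$, it also electrocutes the pre-images $\tilde{\mathbf E}_j$ in $\tilde{\mathbf M}_\Gamma$, and correspondingly in $\tilde{\mathbf M}_n$ the pre-images of the product region geometrically converging to $\mathbf E_j$ (or to $\mathbf E_j$ together with a neighbourhood of its partner when $e_j$ is coupled). With both families electrocuted, every geometrically infinite end into which the geodesic might escape has been neutralised, and the fellow-travelling argument of Lemmas~\ref{passing corresponding one} and~\ref{uniformly farther} from Case~I then goes through verbatim to yield the EPP condition.
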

\begin{proof}
Under this assumption, the geodesic realisation of $r_\zeta^\Gamma$ intersects infinitely many horoballs that are lifts of $U_1, \dots, U_q$ since we are assuming that $\zeta$ is not an endpoint of a lift of a parabolic curve and hence that it is not eventually contained in one among the preimages of $U_1, \dots , U_q$. (Further, since $\zeta$ is not a crown-tip, it cannot be identified with the base-point of such a horoball either.)

We can assume that the same holds for the geodesic realisation of $r_\zeta^{\mathbf M}$ in the universal cover of the model manifold.
(We can properly homotope the ray through a bounded distance if necessary.)

By the approximate isometry $\mathbf h_n$, these cusps correspond to Margulis tubes $U^n_1, \dots , U_q^n$.
In this situation, we can electrocute $\widetilde{\mathbf M}_\Gamma$ at the pre-images of $U_1, \dots , U_q$ and $\widetilde{\mathbf E}^\mathrm{ext}$ together with the pre-images of the cusp neighbourhoods $U_{l_1}, \dots, U_{l_\nu}$ defined in Case \ref{case IEL}, where $\mathbf E$ is defined to be the brick $\mathbf E_j$.
In $\widetilde{\mathbf M}_n$, we  electrocute  
\begin{enumerate}
\item pre-images of the  Margulis tubes $U^n_1, \dots , U_q^n$,
\item  $\mathbf E^\mathrm{ext}_n$ and pre-images of Margulis tubes $U^n_{l_1}, \dots , U^n_{l_\nu}$.
\end{enumerate}  
Then by repeating the argument in Case \ref{case IEL},  the EPP condition follows.
\end{proof}

\noindent {Subcase II B (iii):}\\
Next we consider the (sub)case when there exists a $\Sigma'_k$ contained in $\Sigma_j$ up to isotopy, but $p  \circ r_\zeta$ is eventually disjoint from $\Sigma_k'$, and also there is no $T_k'$ as in Lemma \ref{smaller T}.

\begin{lemma}
\label{stay near}
Suppose that there exists at least one $\Sigma_k'$  contained in $\Sigma_j$ up to isotopy.
Suppose that $p \circ r_\zeta$ is eventually disjoint from any $\Sigma_k'$ that is contained in $\Sigma_j$, and that moreover it is not in the situation of Lemma \ref{smaller T}.
Then, there are  constants $R$ and $t_0$ independent of $n$ such that for all $s_1, s_2\in [0,\infty)$, both  greater than $t_0$, the geodesic arc $r_n(s_1,s_2)^*$ is contained in the $R$-neighbourhood of $\Phi_n(\hyperbolic^2)$.
\end{lemma}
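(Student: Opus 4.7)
The goal is to show that geodesics joining images of the ray under $\Phi_n$ in $\HHH^3$ do not escape far from $\Phi_n(\HHH^2)$, uniformly in $n$. The strategy is a three-step transfer: (i) work in the universal cover $\tilde{\mathbf M}_\Gamma$ of the model manifold, where I show that the model-metric geodesic path-homotopic to $r_\zeta^{\mathbf M}|[s,t]$ stays within a bounded neighbourhood of the lift of the algebraic locus piece sitting over a fixed component $R$ of $p^{-1}(\Sigma_j)$; (ii) pull this conclusion back to $\tilde{\mathbf M}_n$ through the approximate isometries $\mathbf h_n$ available for large $n$ on balls of arbitrary radius around the basepoint (which lies on the algebraic locus); (iii) apply the lift of the bi-Lipschitz model map $f_n$ (whose constant depends only on $\chi(S)$ by Theorem \ref{thm_osa}) to translate the bound into $\HHH^3$, obtaining an $R$-neighbourhood of $\Phi_n(\HHH^2)$.

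\textbf{Step 1: Confining the ray in $\tilde{\mathbf M}_\Gamma$.} Under the hypotheses of Case II B (iii) combined with the negation of Lemma \ref{smaller T}, the ray $p\circ r_\zeta$ is eventually contained in $\Sigma_j$ and eventually disjoint from every $\Sigma_k'\subset\Sigma_j$, and the geodesic realisation $r_\zeta^\Gamma$ meets only finitely many cusp neighbourhoods from the $U_\bullet$ corresponding to $T_k'\subset\Sigma_j$. Consequently, after a threshold $t_0$, the ray $r_\zeta^{\mathbf M}$ lies in a single component of the preimage of the column $\Sigma_j\times (0,1)\cap\mathbf M_\Gamma^0$; call this component $\tilde C_R$. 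The only pieces of $\tilde{\mathbf M}_\Gamma$ which $r_\zeta^{\mathbf M}|[t_0,\infty)$ can still intersect essentially beyond $\tilde C_R$ are a fixed finite collection of lifts of the excluded cusps and subsurfaces.

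\textbf{Step 2: Bounding the geodesic realisation.} Following the electrocution scheme of \S\ref{case IEL}, I electrocute in $\tilde{\mathbf M}_\Gamma$ the preimages of every $\mathbf E_k^{\mathrm{ext}}$ and $\mathbf F_k^{\mathrm{ext}}$ with $k\ne j$, together with the lifts of the cusp neighbourhoods $U_\bullet$ not abutting on $\mathbf E_j$. By the analogue of Lemma \ref{quasi-convex}, the resulting electric (pseudo-)metric on $\tilde{\mathbf M}_\Gamma$ is Gromov hyperbolic and each component of the preimage of $\mathbf E_j^{\mathrm{ext}}$ -- suitably enlarged to include a neighbourhood of the partner end in the coupled case -- is uniformly quasi-convex. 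Since $r_\zeta^{\mathbf M}|[t_0,\infty)$ is trapped in a single such component, the electric geodesic joining $r_\zeta^{\mathbf M}(s)$ and $r_\zeta^{\mathbf M}(t)$ stays close to it; then Lemma \ref{ea-strong} (electro-ambient quasi-geodesic lemma) upgrades this to a uniform bound for the true model-metric geodesic realisation. Because the lift of the algebraic locus piece over $R$ is a bounded deformation retract of this enlarged component, the geodesic realisation lies in a uniform neighbourhood of the algebraic locus lift.

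\textbf{Step 3: Pullback and the main obstacle.} The approximate isometry $\mathbf h_n$ carries the enlarged $\tilde{\mathbf E}_j^{\mathrm{ext}}$ and the algebraic locus lift back into $\tilde{\mathbf M}_n$ with distortion $K_n\to 1$ on arbitrarily large balls; composing with the lift of $f_n$ transports the bound into $\HHH^3$, placing $r_n(s,t)^*$ in a uniform neighbourhood of $\Phi_n(\HHH^2)$ as required. The technical heart of the argument, and the main obstacle, is Step 2 when $e_j$ is coupled: the partner's brick lies on the opposite side of the algebraic locus slice $\Sigma_j\times\{1/2\}$ inside $\tilde C_R$, and a priori the geodesic realisation could cut through the partner and travel far from $\Phi_\infty(\HHH^2)$. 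What saves us is precisely the assumption that $\zeta$ is neither a crown-tip nor a boundary parabolic endpoint: by Theorem \ref{ptpre-ct}, no leaf or complementary ideal polygon drags $\zeta$ toward the partner's ending lamination, so the electrocuted-quasi-convex thickening that unifies $\mathbf E_j$ with its partner's brick does confine the geodesic, as needed. $\Box$
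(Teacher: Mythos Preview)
Your Step 2 contains a genuine gap. You correctly argue that the model-metric geodesic realisation of $r_\zeta^{\mathbf M}|[s,t]$ stays within a uniform neighbourhood of a single component of $\tilde{\mathbf E}_j^{\mathrm{ext}}$. But your next sentence --- ``Because the lift of the algebraic locus piece over $R$ is a bounded deformation retract of this enlarged component, the geodesic realisation lies in a uniform neighbourhood of the algebraic locus lift'' --- is false. The set $\mathbf E_j^{\mathrm{ext}}$ contains (a slightly shrunken version of) $\mathbf E_j = \Sigma_j \times [2/3, 5/6)$, which is a neighbourhood of a \emph{simply degenerate} end: in the model metric this piece has infinite diameter, and points of $\tilde{\mathbf E}_j$ get arbitrarily far from the lift of $\Sigma_j \times \{1/2\}$. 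So ``close to $\tilde{\mathbf E}_j^{\mathrm{ext}}$'' does not imply ``close to the algebraic locus''. Your Step 3 then misidentifies the obstacle as the partner end; the real danger is that the geodesic escapes into $\mathbf E_j$ itself --- and nothing in your electrocution scheme prevents this, since you have electrocuted only the ends with $k\ne j$.

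What you are missing is precisely the mechanism that distinguishes Subcase II~B~(iii) from Subcase II~B~(iv). The paper's proof does not use electrocution here at all. Instead it exploits the hypothesis that there exists some $\Sigma_k' \subset \Sigma_j$: the ray $p\circ r_\zeta$ is eventually trapped in a component $F$ of $\Sigma_j \setminus \bigcup_k \Sigma_k'$, which is a \emph{proper} subsurface of $\Sigma_j$. One then passes to the subgroup $G^F = \pi_1(F)$ and applies the covering theorem to the cover $\HHH^3/G_\infty^F \to \HHH^3/G_\infty$: since $F$ is properly contained in $\Sigma_j$ and disjoint from every $\Sigma_k'$, no simply degenerate end of $(\HHH^3/G_\infty)_0$ can lift to $(\HHH^3/G_\infty^F)_0$, so $G_\infty^F$ is \emph{geometrically finite}. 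Hence the convex core of $G_\infty^F$ is a bounded neighbourhood of $\Phi_\infty(\tilde F)$ modulo cusps, and (using the negation of Lemma~\ref{smaller T} to rule out escape into the remaining horoballs) the geodesic realisation of $r_\zeta^\Gamma$ stays close to $q\circ\Phi_\infty(\HHH^2)$. Geometric convergence then transfers this bound to $\Phi_n(\HHH^2)$ uniformly in $n$. Your argument never isolates $F$ and never invokes geometric finiteness of any subgroup, so it cannot rule out the geodesic running off into $e_j$.
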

\begin{proof}
Since $p\circ r_\zeta$ is eventually disjoint from any $\Sigma_k'$, there is $t_0$ such that $r_\zeta(t)$ is contained in a component $F$ of $\Sigma_j \setminus (\cup_k \Sigma'_k)$ if $t \geq t_0$.
Let $G^F$ be a subgroup of $G$ corresponding to $\pi_1(F)$. Let $G_n^F, G_\infty^F$  be the corresponding subgroups of $G_n$ and $G_\infty$ respectively. Let $\Gamma_F$ 
be the geometric limit of $(G_n^F)$.
Then by the covering theorem \cite{thurstonnotes,canary-cover,OhQ}, we see that $G^F_\infty$ is geometrically finite. Further, the only parabolics are those represented by the components of $\Fr \Sigma_k'$ and those of $\Fr T_l'$.
Since $\Phi_n \circ r_\zeta$ converges to $r_\zeta^\Gamma$ under the geometric convergence of $G_n$ to $\Gamma$, we can assume that 
by taking larger $t_0$ if necessary,  $\Phi_n \circ r_\zeta(t)$ is contained in the $\epsilon$-neighbourhood of the  convex hull of the limit set of $G_n^F$ for $t \geq t_0$.
Furthermore, since we are not in the situation of Lemma \ref{smaller T}, the geodesic realisation of $r_\zeta^\Gamma$ is eventually disjoint from the horoballs corresponding to parabolic curves lying on $\Fr T_l$.
This implies that there is a constant $\delta$ such that  the geodesic realisation of $r_\zeta^\Gamma$ is within a bounded distance of the image $q \circ \Phi_\infty(\HHH^2)$.
Since $\Phi_n(\HHH^2)$ converges geometrically to $q \circ \Phi_\infty(\HHH^2)$, the geodesic arc connecting two points of $\Phi_n \circ r_\zeta(s_1)$ and $\Phi_n \circ r_\zeta(s_2)$ with $s_1, s_2 \geq t_0$ is also within uniformly bounded distance from $\Phi_n(\HHH^2)$.
\end{proof}

The EPP condition in the situation of Lemma \ref{stay near} is now a replica of the proof of Theorems A, B in \cite{mahan-series1} as we are essentially reduced to the geometrically finite case.\\

\noindent {Subcase II B (iv):}\\ We have now come to the remaining subcase. In the discussion so far, we have already dealt with all the cases when the geodesic realisation of  $p_{\mathbf M} \circ r^{\mathbf M}_\zeta$ in $\mathbf{M}_\G$ can escape farther and farther from $\mathbf E$, where $p_{\mathbf M} \colon \widetilde{\mathbf M}_\Gamma \to \mathbf M_\Gamma$ is the universal covering.
Therefore,  we can assume that the geodesic  realisation of $p_{\mathbf M} \circ r^{\mathbf M}_\zeta$ in $\mathbf{M}_\G$ is  eventually contained {\em inside} a fixed simply degenerate brick  $\mathbf{E}$  of $\mathbf{M}_\G$.

We assume  that  $\mathbf{E}$  corresponds to $\Sigma_j$. 
The case when it corresponds to $\Sigma'_k$ can also be dealt with in the same way.
We can then, by moving  basepoints,  assume that $r_\zeta$ is entirely contained in $\Sigma_j$.
Let $G^j$ be the subgroup of $\pi_1(S)$ associated with $\pi_1(\Sigma_j)$. We shall use $G_n^j, G_\infty^j$ respectively for $\rho_n(G^j), \rho_\infty(G^j)$.
Let $\Gamma^j$ be a geometric limit of (a subsequence of) $(G_n^j)_{n \in \naturals}$. Then  $\Gamma^j$ can be regarded as a subgroup of $\Gamma$.

The geometric limit $\Gamma^j$ may be larger than $G_\infty^j$, but by the covering theorem (see \cite{canary-cover} and \cite{OhQ}), there is a neighbourhood $\bar E_j$ of $\bar e_j$, that projects homeomorphically to a neighbourhood $\hat E_j$ of a geometrically infinite end of $(\HHH^3/\Gamma^j)_0$.
Renumbering the parabolic curves on $S$, we assume that $c_1, \dots , c_p$ are the parabolic curves on $\Sigma_j$ including those corresponding to components of $\Fr \Sigma_j$.
Each  $c_k$ corresponds to either a $\integers$-cusp or a $\integers\times \integers$-cusp in $\HHH^3/\Gamma^j$. Let $U_k$ denote a neighbourhood of $c_k$.
Recall that by the geometric convergence of $\HHH^3/G_n^j$ to $\HHH^3/\Gamma^j$, these cusp neighbourhoods correspond to Margulis tubes in $\HHH^3/G^j_n$ for large $n$.
We denote the corresponding Margulis tubes in $\HHH^3/G_n^j$ by $U^n_1, \dots , U^n_p$.

We now electrocute $U_1, \dots, U_p$ in $\HHH^3/\Gamma^j$ to get a new metric $\bar d_{\Gamma^j}$, and accordingly we electrocute $U^n_1, \dots , U^n_p$ in $\HHH^3/G_n^j$ to get a new metric $\bar d_n$ in such a way that $(\HHH^3/G_n^j, \bar d_n)$ converges geometrically to $(\HHH^3/\Gamma^j, \bar d_{\Gamma^j})$.
Let $r_\zeta^\Gamma(s_1,s_2)^*$ be the geodesic arc in $\HHH^3$ connecting $r_\zeta^\Gamma(s_1)$ with $r_\zeta^\Gamma(s_2)$. Also, let $r_\zeta^\Gamma(s_1,s_2)^\times$ be the geodesic arc with respect to the electric metric $\bar d_{\Gamma^j}$. As per our previous terminology, $r_\zeta^\Gamma(s_1,s_2)^*$ is the geodesic realisation of $r_\zeta^\Gamma([s_1,s_2])$.
The hypothesis of Subcase II B (iv) can then be restated as follows:

\begin{assumption}\label{ms2assume}
There is a constant $R'$ such that for any $s_1, s_2  \in [0,\infty)$, the geodesic realisation $r_\zeta^\Gamma(s_1, s_2)^*$  is contained in the $R'$-neighbourhood of $\hat E_j$ with respect to the (electric) metric $\bar d_\Gamma$.
\end{assumption}

Under Assumption \ref{ms2assume}, we are reduced to the case where the geodesic realisations $r_\zeta^\Gamma(s_1,s_2)^*$ can only go deep into  the end  $\bar e^j$. This is exactly the situation dealt with in \cite[Corollary 6.13]{mahan-split} or in the proof of \cite[Theorem A]{mahan-series2} in Section 4.2.3 of that paper:\\
We approximate the geodesics $r_\zeta^\Gamma(s_1,s_2)^*$ or $r_\zeta^\Gamma(s_1, s_2)^\times$ by quasi-geodesics using the construction of model manifolds. Geometric convergence to the end $\mathbf{E}$ of the approximants $\mathbf{E}^n$ ensures the geometric convergence of their model manifolds. Corollary 6.13 \cite{mahan-split}  now translates to the EPP condition as in the proof of \cite[Theorem A]{mahan-series2}.

\subsubsection{The case when $p \circ r_\zeta$ is eventually contained in a geometrically finite $T_j$} Recall that $T_j$ corresponds to a geometrically finite end.
This case is simpler than previous one and we can repeat most of the arguments of the previous cases.
If $\zeta$ is an endpoint of a lift of a parabolic curve,  the argument in Case II A goes through without modification as
we did not use the assumption that $\Sigma_j$ corresponds to a simply degenerate end there.
For the analogues of Cases II B-(i, ii, iii) too we can argue in the same way as there to show the EPP condition.

In the remaining case, since the end $e$ corresponding to $T_j$ is geometrically finite, the geodesic realisation of $r_\zeta^\Gamma$ cannot escape towards $e$, as in the case of simply degenerate end.
Therefore the only possibility is that it lies in a neighbourhood of $q \circ \Phi_\infty(\HHH^2)$.
Thus, this is the same situation as Case II B (iii).

\section{Pointwise convergence for tips of crown domains}
\label{crowntips}
We shall now prove what remains in order to complete the proofs of Theorems \ref{pointwise convergence} and \ref{non-continuous}: \\
\noindent 
If $\zeta$ is a tip of a crown domain $C$ for $(\lambda, \sigma)$, where $\sigma$ is a parabolic curve and $\lambda$ is an ending lamination,  and either the cusp corresponding to $\sigma$ is not conjoining, or $e$ is not coupled, or $\sigma$ is twisted, or $C$ is not well approximated, then the Cannon-Thurston maps {\em do} converge at $\zeta$.
 
 The proof splits into three cases:
 \begin{enumerate}
 	\item Either the cusp corresponding to $\sigma$ is not conjoining or $e$ is not coupled. This case will be dealt with in Proposition \ref{non-coupled} below, and for a special case in Proposition \ref{wellapprox}. 
	\item $e$ is coupled but the crown domain $C$ is {\it not} well approximated. 	This will be dealt with in Proposition \ref{wellapprox} below.
\item The cusp corresponding to $\sigma$ is twisted.
 	This will be dealt with in Proposition \ref{twistedcusp} below. 
 	 \end{enumerate}
 
 Now we consider the case when either the cusp corresponding to $\sigma$ is not conjoining or  $e$ is a simply degenerate end that is not coupled. We shall consider the point-wise convergence of Cannon-Thurston maps at crown-tips $(\lambda, \sigma)$.
  Without loss of generality, we assume that $e$ is upward as usual.
 Let $U$ be a $\integers$-cusp neighbourhood of $\HHH^3/\Gamma$ corresponding to $\sigma$.
 Let $A$ be an open annulus bounding $U$.
 Let $e'$ be another end on which $U$ abuts.
By assumption, $e'$ is not coupled with $e$.
 Then 
  \begin{enumerate}[(a)]
 \item Either $e'$ is also an upward algebraic end (if $U$ is not conjoining this is the only case), or 
 \item $e'$ is downward, but by taking the approximate isometry $h_n^{-1}$, a neighbourhood of $e'$ does not contain a surface $\Sigma'_+$ as in (3)-(c) of Definition \ref{def coupled} which is parallel into a neighbourhood of $e$.
 \end{enumerate}
The proof of the case (b) is deferred and will be dealt with in Proposition \ref{wellapprox} together with the case when the crown domain $C$ is not well approximated.

 We now state the point-wise continuity in the case (a) above.
 \begin{prop}\label{non-coupled}
 Let $e$ be a simply degenerate end of $(\HHH^3/G_\infty)_0$ corresponding to a subsurface $\Sigma$ of $S$, and let $\sigma$ be a component of $\Fr \Sigma$.
 Suppose that  either $e$ is not coupled or the cusp $U$ of $\HHH^3/\Gamma$ corresponding to $\sigma$ is not conjoining.
 In the former case, we assume moreover that   condition (a) above holds. Let $\lambda$ be the ending lamination for $e$, whose minimal supporting surface is $\Sigma$. Let $\zeta$ be a crown-tip of $(\sigma ,\lambda)$. Also, let $c_n\, (n=1, \cdots, \infty)$ denote the Cannon-Thurston maps for the representations $\rho_n\, (n=1, \cdots, \infty)$.
Then we have $c_n (\zeta) \to c_\infty (\zeta)$. 
 \end{prop}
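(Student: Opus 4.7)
The plan is to verify the EPP condition at $\zeta$ and then invoke Proposition~\ref{ptwisecrit}. Since $\zeta$ is a crown-tip of $(\lambda,\sigma)$ and $\rho_\infty(\gamma_\sigma)$ is parabolic, Theorem~\ref{ptpre-ct} already yields $c_\infty(\zeta)=p_\sigma$, so the task reduces to showing that the $\HHH^3$-geodesic joining $\Phi_n(r_\zeta(s))$ and $\Phi_n(r_\zeta(t))$, with $s,t$ far along the ray $r_\zeta$ to $\zeta$, remains far from $o_{\HHH^3}$ uniformly in $n$. My guiding principle is a contrast with the discontinuity mechanism of \S\ref{necessity}: there, a bridging arc $a^n_\ell$ of uniformly bounded length connected $o_{\HHH^3}$ to the leaf realisation $\tilde \ell_n$ across the Margulis tube lift $\tilde P_n$, and this was precisely what produced $c_n(\zeta)\to p_\mu\ne p_\sigma$. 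Such bridging required both an untwisted cusp $U$ and the existence of a partner end $e'$ coupled to $e$ through $U$ in the sense of Definition~\ref{def coupled}. The present hypothesis removes the partner, and the substance of the proof is to argue that the absence of a partner forbids any such bridging.

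For the setup I would mirror the electrocution scheme of \S\ref{case IEL}. In the universal cover $\tilde{\mathbf M}_\Gamma$, electrocute all preimages of $\integers$-cusps and of the thickened simply degenerate end regions $\mathbf E_j^{\mathrm{ext}}$ and $\mathbf F_j^{\mathrm{ext}}$. The crucial choice is in the definition of $\mathbf E^{\mathrm{ext}}$ for the end $\mathbf E$ containing the image of $e$: here I do \emph{not} adjoin any neighbourhood of a partner $\mathbf E'$ across $U$, since the hypothesis forbids the existence of a partner in the sense of Definition~\ref{def coupled}. Thus $\mathbf E^{\mathrm{ext}}$ is built from $\mathbf E$ only by attaching the cusp $U$ together with the separating and non-separating cusp thickenings on the $\Sigma$-side. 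Lemma~\ref{quasi-convex} then applies verbatim, giving uniformly quasi-convex preimages in a Gromov hyperbolic electric metric $\bar d_{\mathbf M}$, and Lemma~\ref{ea-strong} lets us approximate $\HHH^3$-geodesics by electro-ambient representatives of the corresponding electric geodesics.

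The main obstacle, and the real content of the argument, is to show that the $\HHH^3$-geodesic $[\Phi_n(r_\zeta(s)),\Phi_n(r_\zeta(t))]$ does not exploit the Margulis tube $\tilde{\mathcal T}_n$ approximating $\tilde U$ as a shortcut back towards $o_{\HHH^3}$. When $U$ is not conjoining, the side of $\tilde U$ opposite to $\mathbf E$ does not abut on a partner end, so the components of the preimage of $\mathbf E^{\mathrm{ext}}$ encountered by $r_\zeta$ terminate at $\tilde U$: any geodesic excursion across $\tilde U$ exits the electrocuted quasi-convex region, and by Lemma~\ref{ea-strong} picks up $\HHH^3$-length diverging as $s,t\to\infty$. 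In the non-coupled case under hypothesis (a), both $e$ and $e'$ are upward algebraic ends sitting at brick levels $\geq 2/3$ in $\mathbf M_\Gamma$ while the algebraic locus sits at level $1/2$; any path passing from $\mathbf E$ through $\tilde U$ to a neighbourhood of $\mathbf E'$ and back towards the algebraic locus must traverse two long vertical extents, so under the approximate isometry $\mathbf h_n^{-1}$ the corresponding bridging in $\HHH^3$ has length diverging with $n$.

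Once this no-shortcut bound is in place, the EPP verification runs exactly as in Proposition~\ref{EPP infinite}: the electro-ambient representative of $[\Phi_n(r_\zeta(s)),\Phi_n(r_\zeta(t))]$ lies in a uniform neighbourhood of the relevant preimage components of $\mathbf E^{\mathrm{ext}}$, proper discontinuity of the $\Gamma$-action forces those components to be uniformly far from the basepoint $\mathbf o_\Gamma$ as $s,t$ move out along $r_\zeta$, and pulling back by the approximate isometries $\mathbf h_n$ produces the required proper function $g:\naturals\to\naturals$. Proposition~\ref{ptwisecrit} then delivers $c_n(\zeta)\to c_\infty(\zeta)=p_\sigma$, completing the proof.
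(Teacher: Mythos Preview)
Your approach has a genuine gap. You try to run the electrocution scheme of \S\ref{case IEL} and conclude via Proposition~\ref{EPP infinite}, but that machinery is designed for rays satisfying the IEL condition (Assumption~\ref{iel}), i.e.\ rays that cross infinitely many components of $p^{-1}(\Sigma_j)$. A crown-tip $\zeta$ for $(\lambda,\sigma)$ lies in the limit set of a single component $R$ of $p^{-1}(\Sigma_j)$: the ray $r_\zeta$ is eventually contained in $R$, and its geodesic realisation goes deep into the single lift $\tilde E_R$. Your final step, ``proper discontinuity of the $\Gamma$-action forces those components to be uniformly far from the basepoint as $s,t$ move out along $r_\zeta$'', therefore fails outright: there is only one component $\tilde E_R$, it contains the basepoint region, and it does not recede as $s,t\to\infty$. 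This is precisely why the paper separates Case~I from Case~II in \S\ref{noncrown} and then treats crown-tips separately in \S\ref{crowntips}.

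The paper's proof does not go through EPP at all. It argues directly that $c_n(\zeta)\to c_\infty(\zeta)$ by analysing the Margulis tube $U^n$ corresponding to $\sigma$. The key geometric input is that under hypothesis~(a) (or when $U$ is not conjoining) the boundary torus $\mathbf T_n$ cannot converge to a torus or a conjoining annulus, so the moduli of \emph{both} vertical boundary annuli $A_l^n,A_r^n$ tend to $\infty$. This forces the pleated surface $f_n$ realising $\lambda$ to leave every compact set of $E_n$, and lets one replace the geodesic ray from $o_{\HHH^3}$ to $c_n(\zeta)$ by a uniform two-piece quasi-geodesic: a segment from $o_{\HHH^3}$ to the point $z_n$ on $\tilde\ell_n$ nearest $\tilde U$, followed by a ray along $\tilde\ell_n$. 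The diverging vertical moduli keep the exterior angle bounded away from $\pi$, and the time this quasi-geodesic spends in $\tilde U^n$ diverges. Since $\tilde U^n$ converges to a horoball touching $S^2_\infty$ only at $c_\infty(\zeta)$, the endpoint $c_n(\zeta)$ is forced to converge there. Your ``no-shortcut'' intuition is pointing at the right phenomenon (the long vertical extents), but the correct way to exploit it is this direct tube argument, not the Case~I electrocution.
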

 
 \begin{proof}
The cusp $U$ corresponds to a Margulis tube $U^n$ in $\HHH^3/G_n$ under the approximate isometry $h_n^{-1}$ for large $n$. The core curve of $U^n$ is freely homotopic to $\rho_n(\sigma)$.
We denote by $\mathbf U$ and $\mathbf U^n$ their counterparts in the model manifolds $\mathbf M_\Gamma$ and $\mathbf M_n$.
In the model manifold $\mathbf M_n$, the Margulis tube $\mathbf U^n$ is bounded by a torus $\mathbf T_n$ consisting of two horizontal annuli and two vertical annuli. These in turn correspond to the boundary $T_n$ of $U^n$.
Let $A_l^n, A_r^n$ be the two vertical annuli in $\mathbf T_n$, thought of as the left and right vertical annulus respectively.
Under condition (a), $\mathbf T_n$ converges to neither a torus nor a conjoining annulus.
Therefore, we see that the moduli of both $A_l^n$ and $A_r^n$ go to $\infty$ as $n \to \infty$.
Let $\mathbf E$ be the brick of $\mathbf M_\Gamma^0$ containing the end $f_\Gamma^{-1}(e)$.
We denote by $\mathcal U$ the union of cusp neighbourhoods of $\mathbf M_\Gamma$ abutting on $e$, and by $\mathcal U^n$ the union of Margulis tubes corresponding to $\mathcal U$ under the approximate isometry $\mathbf h_n$.
Then, if we regard $\mathbf M_n \setminus \mathcal U^n$ as a brick manifold with bricks consisting of maximal sets of parallel horizontal surfaces, there is a brick $\mathbf E_n$ of $\mathbf M_n \setminus \mathcal U^n$ which corresponds to $\mathbf E$ under $\mathbf h_n$.
Since $T_n$ converges to $A$ geometrically, we see that $\mathbf E_n$ converges geometrically to $\mathbf E$ whenever we choose a basepoint within a bounded distance from the algebraic locus.
We denote the counterpart of $\mathbf E_n$ and $\mathbf E$ in $\HHH^3/G_n$ and $\HHH^3/\Gamma$ by $E_n$ and $E$ respectively.
 

Now consider a pleated surface $f_n\colon S \to \HHH^3/G_n$  realising $\lambda$ and inducing $\rho_n$ between the fundamental groups. Let $\tilde f_n \colon \HHH^2 \to \HHH^3$ be its lift.
The endpoint at infinity $c_n(\zeta)$ is also an endpoint at infinity of a leaf $\ell_n$ of the pleating locus of $\tilde f_n$. It  is in fact a lift of a leaf of $\lambda_C$. Therefore, the geodesic realisation $r_\zeta^n$ of $\Phi_n \circ r_\zeta$ (setting its starting point to be $o_{\HHH^3}$) is asymptotic to this leaf $\ell_n$.
Next note that $(E_n)$ converges geometrically to $E$. Further, $\lambda_C$, which consists of leaves of $\lambda$, is not realisable in $E$.  Hence,
 for any compact set $K$, there exists $n_0$ such that for $n \geq n_0$, the surface $f_n(S) \cap E_n$ is disjoint from $h_n^{-1}(K)$.
Let $\widetilde U$ be a lift of $U$ corresponding to a lift of $\sigma$  lying on the boundary of the crown domain $C$ having $\zeta$ as a vertex.
Then $\widetilde U$ is a horoball touching $S^2_\infty$ at $c_\infty(\zeta)$.
The geodesic ray $r_\zeta^n$ can then be properly homotoped to a quasi-geodesic ray $\bar r_\zeta^n$ consisting of two geodesics; one connecting $o_{\HHH^3}$ to the point $z_n$ on $\ell_n$ nearest to $\widetilde U$, and a geodesic ray connecting $z_n$ to $c_n(\zeta)$ lying in the image of $\tilde f_n$.
Since the moduli of  both $A_l^n$ and $A_r^n$ go to $\infty$, the exterior angle of the two constituents of $\bar r_\zeta^n$ is bounded away from $\pi$. Hence $(\bar r_\zeta^n)$ is uniformly quasi-geodesic.
Since the time $\bar r_\zeta^n$ spends in $U^n$ goes to $\infty$, the same holds for $r_\zeta^n$.
Finally note that $U^n$ converges to $U$ and the latter lifts to a horoball $\widetilde U$ touching $S^2_\infty$ at only one point $c_\infty(\zeta)$. Hence $(c_n(\zeta))$ converges to $c_\infty(\zeta)$.
\end{proof}

Now we consider the second case when $P$ is conjoining but $e$ is not coupled or $C$ is not well approximated.

 \begin{prop}\label{wellapprox}
	Let $e$ be a simply degenerate end of $(\HHH^3/G_\infty)_0$ corresponding to the subsurface $\Sigma$ of $S$. Let $\lambda$ be the ending lamination for $e$ supported on $\Sigma$ and $\sigma$   a parabolic curve whose corresponding cusp abuts on  $e$. Suppose that 
	\begin{enumerate}
	\item Either $e$ is not coupled and the condition (b) before Proposition \ref{non-coupled} holds,
	\item Or, a crown domain $C$ for $(\lambda, 
	\sigma)$ is {\em not} well approximated. 
	\end{enumerate}
	Let $\zeta$ be a tip of $C$. Also, let $c_n\, (n=1, \cdots, \infty)$ denote the Cannon-Thurston maps for the representations $\rho_n\, (n=1, \cdots, \infty)$.
	Then $(c_n (\zeta))$ converges to  $c_\infty (\zeta)$. 
\end{prop}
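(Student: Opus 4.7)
The plan is to verify the EPP condition of Proposition \ref{ptwisecrit} for the ray $r_\zeta\colon [0,\infty)\to\HHH^2$ from $o_{\HHH^2}$ to $\zeta$; once established, that proposition will yield $c_n(\zeta)\to c_\infty(\zeta)=p_\sigma$, where $p_\sigma$ is the parabolic fixed point of $\rho_\infty(\sigma)$ on the boundary of the horoball $\tilde P\subset \HHH^3$ lifting the cusp at $\sigma$. Under $\Phi_\infty$ the tail of $r_\zeta$ enters $\tilde P$ and travels arbitrarily deep into the cusp, so the EPP condition is automatic for $n=\infty$. Writing $\tilde P_n$ for the lifts of the Margulis tubes in $\HHH^3/G_n$ geometrically converging to $\tilde P$, it will suffice to show that for geodesic segments $[a,b]\subset r_\zeta$ with $d_{\HHH^2}(o_{\HHH^2},[a,b])$ large, the geodesic $[\Phi_n(a),\Phi_n(b)]\subset\HHH^3$ remains uniformly far from $o_{\HHH^3}$ for all large $n$. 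Failure of this would require a \lq shortcut' bridging $o_{\HHH^3}$ across $\tilde P_n$ to the realization $\tilde \ell_n$ of a side of the crown on the $e'$-facing side---exactly the bounded bridge $a_\ell^n$ constructed in Section \ref{necessity} from the parallel subsurface $\Sigma'_+$ and the well-approximated leaves $\Psi_n^{-1}(\lambda_C)$. Our strategy is the contrapositive: when either of these conditions fails, no bounded bridge exists in the relevant homotopy class.

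In case (1), condition (b) tells us that no subsurface of a neighborhood of $e'$ is parallel into $\Sigma_-$ via $\mathbf h_n$. We translate this into the geometric statement that the vertical annulus components of $\partial \mathbf U^n$ facing $e'$ grow unboundedly in height as $n\to\infty$, so the moduli-blowup mechanism driving Proposition \ref{non-coupled} applies after adjustments reflecting the different structure on the $e$-side. Concretely, any bridge from the algebraic locus in $\mathbf M_n$ to a surface representing a neighborhood of $e'$ must enter $\mathbf U^n$ to depth tending to infinity; transferring the estimate through the bi-Lipschitz model map and the approximate isometry $\mathbf h_n$, the broken-geodesic analysis of Proposition \ref{non-coupled} carries over to yield EPP.

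In case (2), by Remark \ref{leaf of EL} it suffices to treat a single leaf $\ell$ of $\lambda_C$ such that $\Psi_n^{-1}(\ell)$ converges to a geodesic $\ell_\infty$ which is either unrealizable in $\HHH^3/\Gamma$ or is a leaf of the ending lamination of another simply degenerate end of $(\HHH^3/\Gamma)_0$. In either alternative, the limit of the lifts $\tilde \kappa_n$ from the necessity argument fails to be a bi-infinite geodesic with a distinct second endpoint $p_\mu\neq p_\sigma$: it either escapes to a different end or degenerates. An electrocution argument modeled on Section \ref{case IEL}, using Lemma \ref{ea-strong} together with the brick-manifold model of Theorem \ref{thm_osa} to electrocute pre-images of the relevant bricks and cusp neighborhoods near $e'$, will then show that every candidate bridge must enter $\tilde P_n$ at unbounded depth. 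The main technical obstacle lies in the wild-end alternative for $e'$ (Remark \ref{wild}), where a careful accounting over the infinite sequence of accumulating relative ends is needed to rule out bounded bridges exploiting some sub-pocket of the wild structure; here we will exploit that each potential escape sub-pocket itself admits only non-realizable or misaligned limit geodesics.
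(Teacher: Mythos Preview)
Your strategy of denying the existence of a bounded bridge $a_\ell^n$ is the right contrapositive to aim for, but the mechanism you propose for each case does not work, and you are missing the single idea that the paper uses to handle both cases at once.

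In case (1) your claim that the vertical annulus of $\partial\mathbf U^n$ facing $e'$ has modulus going to infinity is incorrect. Under condition (b) the cusp $U$ \emph{is} conjoining: its boundary annulus abuts on both $e$ and $e'$, so the vertical annulus on that side of $\mathbf T_n$ has bounded height (it converges to the conjoining annulus). The failure of condition (3)-(c) in Definition \ref{def coupled} is a statement about the \emph{parallelism class} of the subsurface $\Sigma'_+$ inside $\mathbf M_n$, not about the height of the tube boundary; it does not by itself force a bridge in the correct homotopy class to enter $\mathbf U^n$ deeply. So the reduction to Proposition \ref{non-coupled} does not go through on these grounds.

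In case (2) the electrocution machinery you sketch is considerably heavier than what is needed and is not developed enough to constitute a proof, especially in the wild-end alternative you flag. The paper bypasses all of this with a short pleated-surface argument that treats cases (1) and (2) simultaneously: take pleated surfaces $f_n\colon S\to\HHH^3/G_n$ realising $\lambda\cup\Fr\Sigma$. If $(f_n|\Sigma)$ converged geometrically (even after passing to a subsequence, or after cutting along a short curve and keeping the piece adjacent to $\sigma$), the limit would be a pleated surface in $\HHH^3/\Gamma$ realising $\lambda_C$ in a partner of $e$ --- which is precisely the definition of $e$ being coupled with $C$ well approximated, contradicting the hypothesis. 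Compactness of unmarked pleated surfaces then forces the induced metrics $m_n$ on $\Sigma$ to leave every compact set in moduli space, and (since no limiting realisation of $\lambda_C$ can exist near $\sigma$) the distance from the basepoint to $f_n(\Sigma)$ goes to infinity. At that point the realisation $\tilde\ell_n$ of the crown side escapes from any fixed compact set, and the broken-geodesic argument of Proposition \ref{non-coupled} applies verbatim to give $c_n(\zeta)\to c_\infty(\zeta)$. The point you are missing is that \lq\lq not well approximated'' and \lq\lq condition (b)'' are both, by definition, exactly the negation of the existence of a limiting pleated realisation of $\lambda_C$; invoking pleated-surface compactness turns this directly into the divergence you need, without any electrocution or tube-geometry bookkeeping.
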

\begin{proof}
We retain the notation in the proof of  Proposition \ref{non-coupled} and assume that $e$ is upward. 
We take a pleated surface $f_n\colon S \to \HHH^3/G_n$ realising $\lambda\cup \Fr \Sigma$.
Since the pleated surface $f_n$ realises $\lambda$, it follows that if $(f_n|\Sigma)$ converges geometrically, then after passing to a subsequence, we obtain a pleated surface map from $\Sigma$ to $\HHH^3/\Gamma$. This implies, however, that $e$ is coupled and $C$ is well approximated, contradicting the assumption.
Therefore, $(f_n)$ cannot converge geometrically.

Let $m_n$ be the hyperbolic structure on $\Sigma$ induced from $\HHH^3/G_n$ by $f_n$.
If $(m_n)$ is bounded in the moduli space then by the compactness of unmarked pleated surfaces, $(f_n)$ must converge geometrically  (after passing to a subsequence) as above.
Therefore $(m_n)$ is unbounded. Hence there is an essential simple closed curve $s_n$ in $\Sigma$ such that $\length_{m_n}(s_n) \to 0$.
We can choose  $s_n$ so that for a component $W_n$ of  $\Sigma \setminus s_n$ containing $\sigma$ in its boundary, $(W_n, m_n)$ converges geometrically to a complete hyperbolic surface.
If there is a limiting pleated surface realising $\lambda_C$ in the partner(s) of $e$, then $e$ is coupled and $C$ is well approximated, contradicting our assumption.

It follows that the only possibility is that  the distance from the basepoint and $f_n(\Sigma)$ goes to $\infty$.
By repeating the argument in the proof of Proposition \ref{non-coupled}, we see that $(c_n(\zeta))$ converges to $c_\infty(\zeta)$.
\end{proof}
%
%
%
%
%

Now we turn to the last case when the cusp $P$ is twisted although the end is coupled, $P$ is conjoining, and $C$ is well approximated.
 
  \begin{prop}\label{twistedcusp}
 	Let $e$ be a coupled simply degenerate end of $(\HHH^3/G_\infty)_0$ corresponding to the subsurface $\Sigma$ of $S$. Let $\lambda$ be the ending lamination for $E$ supported on $\Sigma$ and  let $\sigma$ be a parabolic curve corresponding to a twisted cusp $P$ abutting on $e$. Let $\zeta$ be a tip of a crown domain $C$ of $(\lambda, c)$. We assume that $C$ is well approximated.
	Let $c_n\, (n=1, \cdots, \infty)$ denote the Cannon-Thurston maps for the representations $\rho_n$, $n=1, \cdots, \infty$.
 	Then $(c_n (\zeta))$ converges to $c_\infty (\zeta)$.
 \end{prop}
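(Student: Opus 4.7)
The plan is to parallel the bridging-path construction from the necessity argument in Section~\ref{necessity}, but exploit the twisted hypothesis to derive the opposite conclusion: $c_n(\zeta)\to c_\infty(\zeta)=p_\sigma$, where $p_\sigma$ is the parabolic fixed point of $\rho_\infty(\sigma)$ (as given by Theorem~\ref{ptpre-ct} applied to the crown-tip $\zeta$). Let $\ell$ be a lift to $\HHH^2$ of a leaf of $\lambda_C$ with endpoint $\zeta$, realized via a pleated surface map as a geodesic $\tilde\ell_n\subset\HHH^3$, so that $c_n(\zeta)$ is an endpoint of $\tilde\ell_n$ at infinity. Let $P_n$ be the Margulis tube in $\HHH^3/G_n$ converging geometrically to $P$, and let $\tilde P_n$ be the corresponding lift in $\HHH^3$; as $n\to\infty$, $\tilde P_n$ limits to a horoball $\tilde P$ based at $p_\sigma$. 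Exactly as in the necessity proof, construct a bridging path $a^n_\ell$ from $o_{\HHH^3}$ to a point $o^n_\mu\in\tilde\ell_n$ that crosses $\tilde P_n$ in the homotopy class prescribed by the marking $\rho_n$.

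The key contrast with the untwisted case is that the bridging path satisfies $|a^n_\ell|\to\infty$. Since $P$ is conjoining, the vertical extent of $\partial\tilde P_n$ on the algebraic side remains bounded; however, since $P$ is \emph{twisted}, the longitudinal displacement required of $a^n_\ell$ along $\tilde P_n$ is unbounded. Concretely, a twisted cusp corresponds in the Masur--Minsky hierarchy description to an annulus geodesic supported on $\sigma$ of length of order $\alpha_n\to\infty$, which forces any pleated realization of a leaf of $\lambda_C$ spiralling into $\sigma$ to wind around the core $\tilde\gamma_n$ a number of times proportional to $\alpha_n$ before peeling off. Thus $\tilde\ell_n$ is shifted longitudinally along $\tilde\gamma_n$ relative to $o_{\HHH^3}$ by an unbounded amount, and any path in the correct homotopy class satisfies $d(o_{\HHH^3},o^n_\mu)\to\infty$. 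The well-approximation hypothesis on $C$ is precisely what precludes dissipating this winding via an alternative homotopy through the partner end $e'$.

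Given this, a visibility argument yields the desired convergence. Since $o^n_\mu\in\tilde P_n$ and $(\tilde P_n)$ converges geometrically to the horoball $\tilde P$ at $p_\sigma$ while $d(o_{\HHH^3},o^n_\mu)\to\infty$, the point $o^n_\mu$ converges to $p_\sigma$ in the visual compactification $\overline{\HHH^3}=\HHH^3\cup S^2_\infty$. Now $c_n(\zeta)$ is the ideal endpoint of $\tilde\ell_n$ lying on the far side of $o^n_\mu$ from $o_{\HHH^3}$. Since $a^n_\ell$ meets $\tilde\ell_n$ transversely at $o^n_\mu$, the angle at $o^n_\mu$ between the directions toward $o_{\HHH^3}$ and toward $c_n(\zeta)$ is bounded away from $0$ and $\pi$, and hyperbolic trigonometry then gives that the visual angle subtended at $o_{\HHH^3}$ by $o^n_\mu$ and $c_n(\zeta)$ is of order $e^{-d(o_{\HHH^3},o^n_\mu)}\to 0$. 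Therefore $c_n(\zeta)$ and $o^n_\mu$ share the same visual limit $p_\sigma$, establishing $c_n(\zeta)\to c_\infty(\zeta)$.

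The main obstacle is the divergence claim $|a^n_\ell|\to\infty$. Verifying it requires tracking through the approximate isometries $h_n$ how the block structure of the model manifold $\mathbf M_n$ around the Margulis tube $U^n$ encodes the Dehn twist parameter $\alpha_n$, and showing quantitatively that a pleated realization of a leaf of $\lambda_C$, asymptotic to $\tilde\gamma_n$ at one end, must traverse a number of blocks proportional to $\alpha_n$ longitudinally around the core before exiting a neighbourhood of $\tilde P_n$. The well-approximation hypothesis enters here in a subtle but essential way: it is what guarantees that this winding cannot be absorbed by re-routing the geodesic through the partner end $e'$, and hence cannot be dissipated into a bounded-length bridging path.
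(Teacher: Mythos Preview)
Your approach is essentially the same as the paper's: reuse the bridging-path $a^n_\ell$ from \S\ref{necessity}, note that the twisted hypothesis forces the landing point on $\tilde\ell_n$ to recede to infinity with the time $a^n_\ell$ spends in $\tilde P_n$ diverging, and conclude via $\tilde P_n \to \tilde P$ (the horoball based at $c_\infty(\zeta)$). The paper's proof is in fact terser than yours, omitting the visibility/angle elaboration entirely.

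Two inaccuracies deserve flagging, though neither is fatal. First, your stated role for well-approximation (``precludes dissipating this winding via an alternative homotopy through the partner end'') is not how it enters: exactly as in \S\ref{necessity}, well-approximation is what ensures $\tilde\ell_n$ has a geometric limit at bounded distance from $\tilde P_n$, so that the landing point $o^n_\mu$ is located near $\tilde P_n$ in the first place; the non-well-approximated case is disposed of separately by Proposition~\ref{wellapprox}, not because the twisting could somehow be ``absorbed''. Second, a general side $\ell$ of the crown domain is a leaf of the minimal lamination $\tilde\lambda$ with both endpoints at crown-tips --- it is not ``asymptotic to $\tilde\gamma_n$ at one end'' nor does it ``spiral into $\sigma$''. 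The correct mechanism is that $\tilde\ell_n$ sits at bounded transverse distance from $\tilde P_n$ (by well-approximation), while the unbounded Dehn-twist parameter $\alpha_n$ shifts its longitudinal position along $\tilde P_n$ away from the basepoint $o_{\HHH^3}$.
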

\begin{proof}
We shall use  notation from \S \ref{necessity}.
In particular, we consider a leaf $\ell$ of the pre-image of $\lambda$ in $\HHH^2$, whose endpoint at infinity is $\zeta$.
The only difference from the situation in \S \ref{necessity} is that the cusp $P$ is twisted now.
As in \S \ref{necessity}, we consider the realisation $\tilde \ell_n$ of $\ell$  and an arc $a^n_\ell$ in $\HHH^3$ connecting $o_{\HHH^3}$ to $\tilde \ell_n$ that bridges over $\widetilde P^n$.
Then, since $P$ is twisted, the landing point $y_n$ of $a^n_\ell$ has distance from $o_{\HHH^3}$ going to $\infty$, and the time  it spends in a lift $\widetilde P^n$ of the Margulis tube $P^n$ also goes to $\infty$.
Recall that $\widetilde P^n$ converges to a horoball $\widetilde P$ touching  $S^2_\infty$ at $c_\infty(\zeta)$.
This shows that $c_n(\zeta)$, which is  the endpoint at infinity of $\ell$, converges to $c_\infty(\zeta)$.
%
\end{proof}

\section*{Acknowledgement} The authors would like to express their gratitude to the anonymous referee for his/her careful reading of the manuscript and valuable comments, which enabled us to improve our exposition.

\bibliographystyle{alpha}

\end{document}